\numberwithin{equation}{section}
\numberwithin{figure}{section}
\theoremstyle{plain}
\newtheorem{thm}{\protect\theoremname}[section]
\theoremstyle{definition}
\newtheorem{defn}[thm]{\protect\definitionname}
\theoremstyle{definition}
\newtheorem{example}[thm]{\protect\examplename}
\theoremstyle{definition}
\newtheorem{rem}[thm]{\protect\remarkname}
\theoremstyle{plain}
\newtheorem{lem}[thm]{\protect\lemmaname}
\theoremstyle{plain}
\newtheorem{cor}[thm]{\protect\corollaryname}
\theoremstyle{plain}
\newtheorem{prop}[thm]{\protect\propositionname}
\theoremstyle{plain}
\newcommand{\labeltext}[2]{
\@bsphack
\csname phantomsection\endcsname
\def\@currentlabel{#1}{\label{#2}}
\@esphack
}
\providecommand{\corollaryname}{Corollary}
\providecommand{\definitionname}{Definition}
\providecommand{\examplename}{Example}
\providecommand{\lemmaname}{Lemma}
\providecommand{\propositionname}{Proposition}
\providecommand{\remarkname}{Remark}
\providecommand{\theoremname}{Theorem}
\providecommand{\conjecturename}{Conjecture}
\DeclareRobustCommand{\SkipTocEntry}[5]{}
\newcommand{\mR}{\mathbb{R}}   
\newcommand{\abs}[1]{\lvert #1 \rvert}  
\newcommand{\norm}[1]{\lVert #1 \rVert}  
\newcommand{\ol}[1]{\overline{#1}}
\newcommand{\eps}{\epsilon}
\newcommand{\dbar}{\overline{\partial}}
\newcommand{\p}{\partial}
\newcommand{\dd}{\mathbb{D}}
\DeclareMathOperator{\Bal}{Bal_{\it k}}
\DeclareMathOperator{\supp}{supp}
\DeclareMathOperator{\dist}{dist}
\newcommand{\dm}{\mathsf{m}}
\begin{document}
\begin{sloppypar}
\title[Quadrature domains for the Helmholtz equation]{Quadrature domains for the Helmholtz equation with applications to non-scattering phenomena}

\author{Pu-Zhao Kow}
\address{Department of Mathematics and Statistics, P.O. Box 35 (MaD), FI-40014 University of Jyv\"{a}skyl\"{a}, Finland.}
\email{\href{mailto:pu-zhao.pz.kow@jyu.fi}{pu-zhao.pz.kow@jyu.fi}}

\author{Simon Larson}
\address{Mathematical Sciences, Chalmers University of Technology and the University of Gothenburg, SE-412 96 G\"{o}teborg, Sweden. }
\email{\href{mailto:larsons@chalmers.se}{larsons@chalmers.se}}

\author{Mikko Salo}
\address{Department of Mathematics and Statistics, P.O. Box 35 (MaD), FI-40014 University of Jyv\"{a}skyl\"{a}, Finland.}
\email{\href{mailto:mikko.j.salo@jyu.fi}{mikko.j.salo@jyu.fi}}

\author{Henrik Shahgholian}
\address{Department of Mathematics, KTH Royal Institute of Technology, SE-100 44 Stockholm, Sweden.}
\email{\href{mailto:henriksh@kth.se}{henriksh@kth.se}}

\begin{abstract}
In this paper, we introduce quadrature domains for the Helmholtz equation. We show existence results for such domains and implement the so-called partial balayage procedure. We also give an application to inverse scattering problems, and show that there are non-scattering domains for the Helmholtz equation at any positive frequency that have inward cusps.
\end{abstract}

\keywords{quadrature domain; non-scattering phenomena; mean value theorem; Helmholtz equation; acoustic equation; metaharmonic functions; partial balayage}
\subjclass[2020]{35J05; 35J15; 35J20; 35R30; 35R35}
\maketitle

\tableofcontents

\section{Introduction and main results}

\addtocontents{toc}{\SkipTocEntry}
\subsection{Background}

This work is motivated by a problem in inverse scattering theory, but it raises questions of independent interest in the context of quadrature domains and free boundary problems. We recall that a bounded domain $D \subset \mathbb{R}^n$ is called a \emph{quadrature domain} (for harmonic functions), corresponding to a measure $\mu$ with $\mathrm{supp}(\mu) \subset D$, if 
\begin{equation} \label{quadrature_harmonic}
\int_D h(x) \,dx = \int h(x) \,d\mu(x)
\end{equation}
for every harmonic function $h \in L^1(D)$. More generally, one can consider distributions $\mu \in \mathscr{E}'(D)$. In the most classical case one is interested in domains $D$ for which $\mu$ is supported at finitely many points, so that \eqref{quadrature_harmonic} reduces to a quadrature identity for computing integrals of harmonic functions.

Quadrature domains can be viewed as a generalization of the mean value theorem (MVT) for harmonic functions. Indeed, we can rephrase the MVT for harmonic functions as follows: 
\[
B_{r}(a) \text{ is a quadrature domain with }\mu = \mathsf{m}(B_{r}(a))\delta_{a},
\]
where $\delta_{a}$ is the Dirac measure at $a$, $\mathsf{m}$ denotes the Lebesgue measure in $\mathbb{R}^{n}$ (i.e. $d\mathsf{m}=dx$) and $B_{r}(a)$ is the ball of radius $r$ centered at $a$. 
In general, the boundary of a quadrature domain is a free boundary in an obstacle-type  problem (see \cite{PSU12FreeBoundary}), and hence near any given point $z \in \partial D$ the domain $D$ is either smooth or $D^c$ has zero density at $z$. Various examples can be constructed via complex analysis, for example, the cardioid domain in Example~\ref{exa:cardioid} below. We refer to \cite{Dav74SchwartzFunction}, \cite{Sak83BalayageQuadrature}, and \cite{GS05QuadratureDomain} for further background.
\labeltext{$\mathsf{m}$ Lebesgue measure in $\mathbb{R}^{n}$}{index:LebesgueMeasure} \labeltext{$B_{r}(a)$ ball of radius $r$ centered at $a$}{index:Ball} 
\labeltext{$B_{r}$ ball of radius $r$ centered at origin}{index:BallOrigin}

The inverse scattering problems studied in \cite{SS21NonscatteringFreeBoundary} lead to a related concept, for solutions of the Helmholtz equation $(\Delta + k^2) u = 0$, where $k \geq 0$ is a frequency. This setting gives rise to various interesting questions. We are not aware of earlier work on quadrature domains for $k > 0$, and in this article we only give some first steps. In addition, we show that any quadrature domain is a \emph{non-scattering domain} (cf.\ Definition \ref{def_ns_domain}) if it admits an incident wave that is positive on its boundary. In \cite{SS21NonscatteringFreeBoundary} it was observed that in the case $k=0$ quadrature domains are non-scattering domains, and hence there are non-scattering domains having inward cusps. Corollary \ref{cor:k-quad-nonscattering} below provides a similar result valid for all $k > 0$.

\addtocontents{toc}{\SkipTocEntry}
\subsection{Notation} Here we gather recurring notation and definitions, with reference to relevant pages. We also mention here that all functions and measures will be real-valued unless stated  otherwise.

\bigskip

\noindent \ref{index:LebesgueMeasure}, \pageref{index:LebesgueMeasure}\\
\noindent \ref{index:Ball}, \pageref{index:Ball} \\
\noindent \ref{index:BallOrigin}, \pageref{index:BallOrigin} \\
\noindent \ref{index:quadratureDefinition}, \pageref{index:quadratureDefinition} \\
\noindent \ref{index:UnitDisk}, \pageref{index:UnitDisk} \\
\noindent \ref{index:Bessel}, \pageref{index:Bessel} \\
\noindent \ref{index:ZeroBessel}, \pageref{index:ZeroBessel} \\
\noindent \ref{index:BesselSecond}, \pageref{index:BesselSecond} \\
\noindent \ref{index:ConstantRnk}, \pageref{index:ConstantRnk} \\
\noindent \ref{index:RealFundamentalSolution}, \pageref{index:RealFundamentalSolution} \\
\noindent \ref{index:potential-Uk}, \pageref{index:potential-Uk} \\
\noindent \ref{index:FamilyFunctionsMu}, \pageref{index:FamilyFunctionsMu} \\
\noindent \ref{index:ConstantMVTRepeat}, \pageref{index:ConstantMVTRepeat} \\
\noindent \ref{index:SaturatedSetDmu}, \pageref{index:SaturatedSetDmu} \\
\noindent \ref{index:Setomegamu}, \pageref{index:Setomegamu}\\

\addtocontents{toc}{\SkipTocEntry}
\subsection{Main results}

We begin with a definition generalizing \eqref{quadrature_harmonic}.

\begin{defn} \label{def:k-quadrature-domain}
Let $k > 0$. A bounded open set $D \subset \mathbb{R}^n$ {\rm (}not necessarily connected{\rm )} is called a \emph{quadrature domain for $(\Delta+k^2)$}, or a \emph{$k$-quadrature domain}, corresponding to a distribution $\mu \in \mathscr{E}'(D)$, if \labeltext{Definition of $k$-quadrature domain}{index:quadratureDefinition}
\[
\int_D w(x) \,dx = \langle \mu, w \rangle
\]
for all $w \in L^1(D)$ satisfying $(\Delta + k^2) w = 0$ in $D$. 
\end{defn}

We remark that solutions of $(\Delta + k^2) w = 0$ are sometimes called \emph{metaharmonic functions}, see e.g.\  \cite[Section~4]{Kuz19MeanValueProperty} or \cite{Fri57metaharmonic} for a discussion. It is important that $\supp(\mu)$ has to be a subset of $D$ (see however \cite[Lemma 2.8]{KM96NewtonianPotential} for a discussion that weakens this assumption for harmonic functions). Indeed, that $\supp(\mu) \subset D$ implies that the distributional pairing $\langle \mu, w \rangle$ is well defined, because solutions of $(\Delta+k^2)w=0$ are smooth in $D$. Furthermore, without this requirement the existence of a distribution satisfying the definition would be trivial, indeed one could choose $\mu = \chi_D$. 

The first question is whether $k$-quadrature domains even exist for $k > 0$. This is indeed the case. In fact, balls are always $k$-quadrature domains. This is a consequence of a MVT for the Helmholtz equation which goes back to H.\ Weber \cite{Web1868,Web1869} (see also \cite{Kuz19MeanValueProperty},
\cite{Kuz21Metaharmonic}, or \cite[p.~289]{CH89MethodsMathematicsPhysicsII}). 
The MVT takes the form 
\[
\int_{B_r(a)} w(x) \,dx = c_{n,k,r}^{{\rm MVT}} w(a)
\]
whenever $w \in L^1(B_r(a))$ and $(\Delta + k^2) w = 0$ in $B_r(a)$. However, unlike for harmonic functions, the constant $c_{n,k,r}^{{\rm MVT}}$ has varying sign depending on $k, r$. In particular, the constant vanishes when $J_{n/2}(kr) = 0$ where $J_{\alpha}$ denotes the Bessel function of the first kind. More details are given in Appendix~\ref{appen:GreenFunction}. It follows that unions of disjoint balls are also $k$-quadrature domains corresponding to linear combinations of delta functions. Choosing two balls whose closures intersect at one point furnishes an example of a $k$-quadrature domain whose boundary is not smooth.
\labeltext{$J_{\alpha}$ Bessel function of first kind}{index:Bessel}

In order to make further progress we consider a PDE characterization of $k$-quadrature domains. One can show (see Proposition \ref{prop_quadrature_pde_equivalence}) that $D$ is a $k$-quadrature domain corresponding to $\mu \in \mathscr{E}'(D)$ if and only if there is a distribution $u \in \mathscr{D}'(\mR^n)$ satisfying 
\begin{equation} \label{oqd_eq}
\begin{cases}
\hfil (\Delta + k^2) u = \chi_D - \mu &\text{ in $\mR^n$}, \\
\hfil u = |\nabla u| = 0 &\text{ in $\mR^n \setminus D$}.
\end{cases}
\end{equation}
Note that by elliptic regularity the distribution $u$ solving $(\Delta + k^2) u = \chi_D$ near $\p D$ must be $C^1$ near $\p D$, and thus the condition that $u$ and $\nabla u$ vanish in $\mR^n \setminus D$ (instead of $\mR^n \setminus \ol{D}$) makes sense. The following result is a local version of the above fact, characterizing domains $D$ that are $k$-quadrature domains for some distribution $\mu$. However, there is no reason to expect that $\mu$ could be chosen to have support at finitely many points.

\begin{thm} \label{thm_pde_quadrature_local}
Let $k > 0$, and let $D$ be a bounded open set in $\mR^n$. Then $D$ is a $k$-quadrature domain for some $\mu \in \mathscr{E}'(D)$ if and only if there is a neighborhood $U$ of $\p D$ in $\mR^n$ and a distribution $u \in \mathscr{D}'(U)$ satisfying 
\begin{equation}
\begin{cases}
\hfil (\Delta + k^2) u = \chi_D &\text{ in $U$}, \\
\hfil u = |\nabla u| = 0 &\text{ in $U \setminus D$}.
\end{cases} \label{eq:local-PDE-characterization}
\end{equation}
Moreover, if $D$ is a $k$-quadrature domain for some $\mu \in \mathscr{E}'(D)$, then $D$ is also a $k$-quadrature domain for some measure $\tilde{\mu}$ having smooth density with respect to Lebesgue measure.
\end{thm}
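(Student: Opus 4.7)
The plan is to deduce both directions from the global PDE characterization provided by Proposition~\ref{prop_quadrature_pde_equivalence}, and to engineer the construction in the $(\Leftarrow)$ direction so that the resulting distribution $\tilde\mu$ is automatically a smooth compactly supported function, whence the ``moreover'' conclusion comes along for free.

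For $(\Rightarrow)$ I would simply restrict. Given $\mu\in\mathscr{E}'(D)$, Proposition~\ref{prop_quadrature_pde_equivalence} yields $u\in\mathscr{D}'(\mR^n)$ with $(\Delta+k^2)u=\chi_D-\mu$ and $u=|\nabla u|=0$ on $\mR^n\setminus D$. Because $\supp\mu$ is a compact subset of the open set $D$, the set $U:=\mR^n\setminus\supp\mu$ is an open neighborhood of $\partial D$ on which $\mu$ vanishes, so restricting $u$ to $U$ produces the desired local solution of \eqref{eq:local-PDE-characterization}.

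For $(\Leftarrow)$ together with the ``moreover'' clause, I would use a cutoff construction. Pick nested open sets $\partial D\subset V\subset\overline V\subset W\subset\overline W\subset U$ and a function $\chi\in C_c^\infty(W)$ with $\chi\equiv 1$ on a neighborhood of $\overline V$; the essential design choice is $\chi\equiv 1$ in a neighborhood of $\partial D$, forcing $\supp\nabla\chi$ into a compact subset of $D$ bounded away from $\partial D$. Elliptic regularity applied to $(\Delta+k^2)u=\chi_D$ yields $u\in C^{1,\alpha}_{\rm loc}(U)$, and interior elliptic regularity on the open set $U\cap D$ (where the right-hand side is the smooth function $1$) upgrades this to $u\in C^\infty(U\cap D)$. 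I would then set $\tilde u:=\chi u$, extended by zero outside $\supp\chi$, and define
\[
\tilde\mu := \chi_D - (\Delta+k^2)\tilde u = (1-\chi)\chi_D - g, \qquad g := 2\nabla\chi\cdot\nabla u + u\,\Delta\chi.
\]
Three things remain to check: (i) $\tilde u=|\nabla\tilde u|=0$ on $\mR^n\setminus D$, following from the boundary conditions on $u$ and the product rule; (ii) $\supp\tilde\mu$ is a compact subset of $D$, since $(1-\chi)\chi_D$ vanishes both outside $D$ and where $\chi\equiv 1$, while $\supp g\subset\supp\nabla\chi$ is disjoint from $\partial D$ and $g\equiv 0$ on $U\setminus D$ because $u=|\nabla u|=0$ there; (iii) $\tilde\mu\in C^\infty(\mR^n)$. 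Assuming these, Proposition~\ref{prop_quadrature_pde_equivalence} applied to the pair $(\tilde u,\tilde\mu)$ shows that $D$ is a $k$-quadrature domain for $\tilde\mu$, and since $\tilde\mu$ is a smooth compactly supported function the corresponding measure $\tilde\mu\,d\dm$ has smooth density with respect to Lebesgue measure, covering both remaining claims at once.

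The main obstacle is step (iii). Without care, $(1-\chi)\chi_D$ would inherit the jump of $\chi_D$ across $\partial D$, and $g$ would inherit the mere $C^{0,\alpha}$ regularity of $\nabla u$ near $\partial D$. Both defects are cured by the insistence that $\chi\equiv 1$ in a neighborhood of $\partial D$: the factor $1-\chi$ then vanishes identically there, absorbing the jump of $\chi_D$, while $\supp\nabla\chi$ and $\supp\Delta\chi$ are confined to the interior of $D$ away from $\partial D$, i.e., to the region where $u$ (and hence $g$) is $C^\infty$ by the interior regularity observation above.
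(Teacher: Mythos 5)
Your proposal is correct and follows essentially the same route as the paper: the forward direction is the restriction of the global solution from Proposition~\ref{prop_quadrature_pde_equivalence} to a neighborhood of $\p D$ disjoint from $\supp(\mu)$, and the converse together with the ``moreover'' clause is obtained exactly as in the paper by taking a cutoff $\chi$ equal to $1$ near $\p D$, setting $\tilde u=\chi u$ and $\tilde\mu=\chi_D-(\Delta+k^2)\tilde u$, and using interior elliptic regularity to see that $\tilde\mu$ is smooth with compact support in $D$. One small inaccuracy: $\supp \nabla\chi$ need not be contained in $D$ (it may meet $U\setminus \ol{D}$), but this is harmless since $u\equiv 0$ on the open set $U\setminus\ol{D}$, so $g=2\nabla\chi\cdot\nabla u+u\,\Delta\chi$ vanishes there and your smoothness and support conclusions for $\tilde\mu$ stand unchanged.
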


\begin{rem}
If $u$ is as in Theorem \ref{thm_pde_quadrature_local}, then clearly 
\begin{equation}
\begin{cases}
\hfil \Delta u = f \chi_D &\text{ in $U$}, \\
\hfil u = |\nabla u| = 0 &\text{ in $U \setminus D$},
\end{cases} \label{eq:harmonic-continuation}
\end{equation}
with $f = 1 - k^2 u$. Extending $u$ from a neighborhood of $\p D$ into some distribution in $\mR^n$ with $u = |\nabla u| = 0$ in $\mR^n \setminus D$ shows that we have an analogue of \eqref{oqd_eq} with $k=0$ and with $\chi_D$ replaced by $f \chi_D$. Thus any  $k$-quadrature domain is a weighted $0$-quadrature domain. Since the weight $f$ is positive on $\p D$, free boundary regularity results for weighted $0$-quadrature domains apply also to $k$-quadrature domains. In particular, such a domain has locally either smooth boundary or its complement is thin in the sense of minimal diameter (see \cite[page 109]{PSU12FreeBoundary}). We also remark that when $k=0$ the equation \eqref{oqd_eq} is related to harmonic continuation of potentials, see \cite{Isa90InverseSourceProblems} for further information.
\end{rem}

Theorem \ref{thm_pde_quadrature_local} has an immediate consequence showing that domains with real-analytic boundary are $k$-quadrature domains.

\begin{cor} \label{thm_quadrature_analytic}
If $k > 0$, then any bounded open set $D \subset \mR^n$ with real-analytic boundary is a $k$-quadrature domain.
\end{cor}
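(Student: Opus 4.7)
The plan is to verify the local PDE condition in Theorem \ref{thm_pde_quadrature_local} using the Cauchy--Kovalevskaya theorem. Since $\Delta + k^2$ has principal symbol $-|\xi|^2$, which is elliptic, no real hypersurface is characteristic; in particular the real-analytic boundary $\p D$ is non-characteristic. For each $p \in \p D$, I would therefore apply Cauchy--Kovalevskaya to obtain an open neighborhood $V_p$ of $p$ and a real-analytic function $v_p \colon V_p \to \mR$ satisfying
\[
(\Delta + k^2) v_p = 1 \text{ in } V_p, \qquad v_p \bigr|_{\p D \cap V_p} = 0, \qquad \p_\nu v_p \bigr|_{\p D \cap V_p} = 0,
\]
where $\nu$ is the unit normal to $\p D$.

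Next I would glue these local solutions into a single function on a neighborhood of $\p D$. On an overlap $V_p \cap V_q$ the two real-analytic functions $v_p$ and $v_q$ solve the same Cauchy problem with the same (analytic) data on $\p D \cap V_p \cap V_q$, so by the uniqueness part of Cauchy--Kovalevskaya they agree in the component of $V_p \cap V_q$ containing that piece of $\p D$. After shrinking the $V_p$'s to be connected and to intersect $\p D$ in a connected piece, one obtains a well-defined real-analytic function $v$ on the open set $U := \bigcup_p V_p \supset \p D$, satisfying $(\Delta + k^2) v = 1$ in $U$ with $v = |\nabla v| = 0$ on $\p D$ (note that $v = 0$ on $\p D$ forces \emph{all} tangential derivatives to vanish there, not only the normal one).

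I would then define $u \in \mathscr{D}'(U)$ by
\[
u(x) = \begin{cases} v(x), & x \in U \cap D, \\ 0, & x \in U \setminus D. \end{cases}
\]
Because the full first-order jet of $v$ vanishes on $\p D$ while the zero function on the other side has vanishing first-order jet as well, $u$ is of class $C^1$ across $\p D$. The standard jump computation for distributional derivatives then shows that no surface measures on $\p D$ appear, so in $U$
\[
(\Delta + k^2) u = \chi_D \cdot \bigl[(\Delta + k^2) v\bigr] + \chi_{U \setminus D} \cdot 0 = \chi_D
\]
as distributions, while by construction $u = |\nabla u| = 0$ in $U \setminus D$. Applying Theorem \ref{thm_pde_quadrature_local} now yields that $D$ is a $k$-quadrature domain for some $\mu \in \mathscr{E}'(D)$ (and even for a measure with smooth Lebesgue density by the last clause of that theorem).

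The only real obstacle is the bookkeeping in the gluing step (choosing the $V_p$'s connected and meeting $\p D$ in a connected arc so that uniqueness applies), together with confirming that $C^1$ matching across $\p D$ is enough to eliminate boundary distributions from $(\Delta + k^2) u$; both are routine once one notes that $v \equiv 0$ on $\p D$ automatically kills all tangential first derivatives. Note also that the argument really needs $k > 0$ only in that one appeals to Theorem \ref{thm_pde_quadrature_local}; the Cauchy--Kovalevskaya step itself works uniformly in $k \geq 0$.
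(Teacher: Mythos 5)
Your proposal is correct and follows essentially the same route as the paper's own proof: solve the Cauchy problem $(\Delta+k^2)v=1$ with zero Cauchy data on the real-analytic boundary via Cauchy--Kowalevski, extend by zero outside $D$, note the $C^1$ (in fact $C^{1,1}$) matching kills any surface terms, and invoke Theorem~\ref{thm_pde_quadrature_local}. The extra care you take with gluing the local solutions and with the jump computation is fine but only fills in details the paper leaves implicit.
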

\begin{proof}
Since $\p D$ is real-analytic, we can use the Cauchy--Kowalevski theorem to find a real-analytic function $u$ near $\p D$ satisfying 
\[
\begin{cases}
\hfil (\Delta + k^2) u = 1 \text{ near $\p D$}, \\
\hfil u|_{\p D} = \p_{\nu} u|_{\p D} = 0,
\end{cases}
\]
where $\partial_\nu$ denotes the derivative in the normal direction to $\p D$. 
We redefine $u$ to be zero outside $D$. One can directly check that $u$ and $\nabla u$ are Lipschitz continuous across $\p D$. Hence $u$ will be $C^{1,1}$ near $\p D$ and will satisfy the condition in Theorem~\ref{thm_pde_quadrature_local}. 
This proves that $D$ is a $k$-quadrature domain.
\end{proof}

The next result gives further examples of $k$-quadrature domains in two dimensions. 

\begin{thm} \label{thm_quadrature_complex}
Let $k > 0$, and let $\dd$ be the unit disc in $\mathbb{R}^{2} \cong \mathbb{C}$. Suppose that $D = \varphi(\dd)$ where $\varphi$ is a complex analytic function in a neighbourhood of $\overline{\dd}$ such that $\varphi\colon \dd \to D$ is bijective. Then $D$ is a $k$-quadrature domain. \labeltext{$\mathbb{D}$ unit disk}{index:UnitDisk}
\end{thm}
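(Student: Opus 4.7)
The plan is to construct a distribution $u$ on some neighborhood $U$ of $\partial D$ satisfying the local equations \eqref{eq:local-PDE-characterization} and then invoke Theorem \ref{thm_pde_quadrature_local}. The construction proceeds by first solving a pulled-back Cauchy problem on the unit disc and then pushing the solution forward via $\varphi$.

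Under the conformal change of coordinates $z = \varphi(w)$ the identity $\Delta_w(f\circ\varphi) = |\varphi'(w)|^{2}(\Delta_z f)\circ\varphi$ shows that the ansatz $u(z) = v(\varphi^{-1}(z))$ on $D$ turns the target equation $(\Delta + k^2)u = \chi_D$ with vanishing Cauchy data on $\partial D$ into
\begin{equation*}
L v \;:=\; \Delta v + k^{2}|\varphi'(w)|^{2} v \;=\; |\varphi'(w)|^{2}\quad\text{in } \mathbb{D}, \qquad v = \partial_{\nu} v = 0 \text{ on } \partial\mathbb{D}.
\end{equation*}
Since $\varphi$ is complex analytic on a neighborhood of $\overline{\mathbb{D}}$, the coefficient $|\varphi'(w)|^{2} = \varphi'(w)\,\overline{\varphi'(w)}$ is real-analytic in $(\mathrm{Re}\,w,\mathrm{Im}\,w)$, and $\partial\mathbb{D}$ is a real-analytic hypersurface non-characteristic for the elliptic operator $L$. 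The Cauchy--Kowalevski theorem thus yields a real-analytic solution $v$ on some neighborhood $N$ of $\partial\mathbb{D}$.

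Next, pick a neighborhood $U$ of $\partial D$ small enough that $\varphi^{-1}(D\cap U)\Subset N$, where $\varphi^{-1}$ denotes the inverse of the biholomorphism $\varphi|_{\mathbb{D}}\colon\mathbb{D}\to D$, and define
\begin{equation*}
u(z) \;=\; \begin{cases} v(\varphi^{-1}(z)) & z\in D\cap U,\\ 0 & z\in U\setminus D. \end{cases}
\end{equation*}
Because $v = 0$ on $\partial\mathbb{D}$ and $\varphi^{-1}(z)\to\partial\mathbb{D}$ as $z\to\partial D$ from inside $D$, the function $u$ is continuous on $U$, so $u\in L^{\infty}_{\mathrm{loc}}(U)\subset\mathscr{D}'(U)$; its vanishing on $U\setminus D$ is built in. To verify $(\Delta+k^2)u = \chi_D$ in $\mathscr{D}'(U)$, I test against $\psi\in C^{\infty}_c(U)$: the change of variable $z = \varphi(w)$ with Jacobian $|\varphi'(w)|^2$, together with the conformality identity above, converts $\langle u,(\Delta+k^2)\psi\rangle$ into $\int_{\mathbb{D}} v\, L\Psi\, dw$, with $\Psi := \psi\circ\varphi$ compactly supported in $N$. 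Integration by parts against the formally self-adjoint operator $L$, using $Lv = |\varphi'|^{2}$ on $\mathbb{D}\cap N$ and the vanishing Cauchy data of $v$ on $\partial\mathbb{D}$, reduces this to $\int_{\mathbb{D}} |\varphi'|^{2}\Psi\, dw = \int_D \psi\, dz = \langle\chi_D,\psi\rangle$, and Theorem \ref{thm_pde_quadrature_local} then yields the conclusion.

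The chief obstacle is that $\varphi'$ may vanish at points of $\partial\mathbb{D}$, producing inward cusps on $\partial D$ (as in Example \ref{exa:cardioid}). At such a point $\varphi^{-1}$ is merely continuous, not smooth, so $u$ need not be classically $C^{1}$ up to $\partial D$; the distributional framework of Theorem \ref{thm_pde_quadrature_local} is precisely what allows us to bypass this, since all computations with $u$ are carried out on $\mathbb{D}$ via the biholomorphism $\varphi|_{\mathbb{D}}$, where $\varphi$ is nondegenerate.
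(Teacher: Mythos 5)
Your proposal shares the paper's opening moves: pull back by $\varphi$, use conformal invariance of $\Delta$ to turn $(\Delta+k^2)u = 1$ into $L v := (\Delta + k^2|\varphi'|^2)v = |\varphi'|^2$, solve the resulting Cauchy problem on an annular neighbourhood of $\partial\mathbb{D}$ by Cauchy--Kowalevski, and push $v$ forward, extended by zero. From there, however, you diverge from the paper in a genuine way. The paper's technical heart is the proof that $u \in C^{1,1}$ near $\partial D$, which hinges on Lemma~\ref{lemma_vo}: $\hat u$ vanishes to fourth order at the zeros of $\varphi'$ on $\partial\mathbb{D}$, and this compensates the quadratic degeneracy of $\varphi'$ at cusps in the chain-rule expressions for $\nabla u$ and $\nabla^2 u$. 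You instead verify the distributional equation $(\Delta+k^2)u = \chi_D$ directly: transplant the test function via $\Psi = \psi\circ\varphi$, integrate by parts against the formally self-adjoint $L$ on an annulus where $v$ and $\Psi$ are smooth, and use the vanishing Cauchy data of $v$ on $\partial\mathbb{D}$. This never confronts the degeneracy of $\varphi'$ on $\partial\mathbb{D}$ at all. Afterwards elliptic regularity (with $\chi_D - k^2 u \in L^\infty_{\rm loc}$) yields $u\in W^{2,p}_{\rm loc}\subset C^{1,\alpha}_{\rm loc}(U)$, which is enough for Theorem~\ref{thm_pde_quadrature_local}. This is a cleaner and more robust argument than the pointwise $C^{1,1}$ estimates.

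There is a gap, though, and it is not where you think. Theorem~\ref{thm_pde_quadrature_local} demands both $u=0$ and $|\nabla u|=0$ in $U\setminus D$, and you only establish the first (and the PDE). Your closing paragraph suggests you believe the distributional framework lets you bypass any pointwise Cauchy data on $\partial D$, but the hypothesis $|\nabla u|=0$ on $U\setminus D$ -- and $U\setminus D$ includes $\partial D\cap U$ -- is still part of the theorem. Once the distributional PDE is established, elliptic regularity makes $u$ a $C^1$ function, $\nabla u\equiv 0$ on the open set $U\setminus\overline D$ since $u\equiv 0$ there, and continuity of $\nabla u$ gives $\nabla u = 0$ on $\partial D\cap U$ \emph{provided} every point of $\partial D$ is a limit of points in $\mathbb R^2\setminus\overline D$. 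This last fact is not true for arbitrary bounded open sets (consider a punctured disc), so one must appeal to the structure of $\partial D$ for conformal images of $\mathbb D$ (Remark~\ref{rem:cusp_discussion}, Proposition~\ref{prop_conformal_boundary_regularity}). Minor further points to clean up: $\Psi$ is not compactly supported in $\mathbb D$ (its support meets $\partial\mathbb D$), so the integration by parts should be carried out on an explicit annulus $\{1-\rho<|w|<1\}$ on which $v$ is defined, using the Cauchy data of $v$ on $\partial\mathbb D$ on one boundary component and the vanishing of $\Psi$, $\nabla\Psi$ near $|w|=1-\rho$ (which requires first shrinking $U$ using the paper's \eqref{varphi_boundary} so that $\mathrm{supp}(\psi\circ\varphi)\cap\overline{\mathbb D}$ stays in the annulus) on the other; and the continuity of $u$ again uses \eqref{varphi_boundary}, which should be proved or at least cited.
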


Domains $D$ as in Theorem \ref{thm_quadrature_complex} include cardioid type domains and domains with double points. Examples and further properties of these domains are given in the end of Section \ref{sec:complex-analysis}.

We also study $k$-quadrature domains from the potential theoretic point of view. More precisely, we construct some $k$-quadrature domains by using \emph{partial balayage}, that is, given a non-negative compactly supported Radon measure $\mu$, we construct a measure $\nu$ by distributing the mass of $\mu$ more uniformly. By investigating the structure of $\nu$, we then construct a $k$-quadrature domain $D$ with respect to $\mu$. For the case when $k=0$ this procedure is classical, see e.g.\ \cite{GR18PartialBalayageManifold,Gus90QuadratureDomains,Gus04LecturesBalayage,Sak83BalayageQuadrature}. In this paper, we give similar results for $k>0$ and many of our results and proofs follow those in the case of $k=0$ as presented in~\cite{Gus90QuadratureDomains,Gus04LecturesBalayage}. In this direction, our main goal is to prove the following theorem:

\begin{thm}[see also Theorem~{\rm \ref{thm:k-quadrature-balayage}}]
\label{thm:main2} 
Let $\mu$ be a positive measure supported in a ball of radius $\epsilon>0$. There exists a constant $c_{n}>0$ depending only on the dimension such that if 
\begin{equation}
0 < k < \frac{c_{n}}{\mu(\mathbb{R}^n)^{1/n}} \quad \mbox{and}\quad \epsilon < c_n \mu(\mathbb{R}^n)^{1/n}, \label{eq:k-range-first}
\end{equation}
then there exists an open connected set $D$ with real-analytic boundary which is a $k$-quadrature domain for $\mu$. 
Moreover, for each $w\in L^1(D)\cap L^1(\mu)$ satisfying $(\Delta+k^2)w \ge 0$ in $D$ we have
\begin{equation}\label{eq:SL-analogue-first}
\int_{D} w(x)\,dx \ge \int w(x)\,d\mu(x). 
\end{equation}
\end{thm}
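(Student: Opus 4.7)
The plan is to build $D$ through a partial balayage construction for the Helmholtz operator, adapting the classical $k=0$ procedure to the present setting. Using the PDE characterization in Theorem~\ref{thm_pde_quadrature_local}, it suffices to produce a nonnegative function $u$ such that
\[
(\Delta+k^2)u = \chi_{\{u>0\}} - \mu \quad\text{in } \mR^n,\qquad u=|\nabla u|=0 \ \text{on } \mR^n\setminus\{u>0\},
\]
for then $D:=\{u>0\}$ will be a $k$-quadrature domain for $\mu$. Concretely, I would fix $R = C_n\mu(\mR^n)^{1/n}$ large enough that the anticipated partial balayage of $\mu$ fits inside $B_R$, note that $\supp\mu\subset B_\epsilon \Subset B_R$ by the second half of \eqref{eq:k-range-first}, and minimize
\[
J_k(v) = \int_{B_R}\!\bigl(|\nabla v|^2 - k^2 v^2 + 2v\bigr)\,dx - 2\langle\mu,v\rangle
\]
over $\{v\in H^1_0(B_R):v\geq 0\}$. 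Since the first Dirichlet eigenvalue $\lambda_1(B_R)$ scales like $R^{-2}$, the first inequality in \eqref{eq:k-range-first} places $k^2$ strictly below $\lambda_1(B_R)$, so the quadratic form is coercive and a nonnegative minimizer $u$ exists uniquely. The Euler--Lagrange variational inequality together with standard obstacle-problem regularity (giving $u\in C^{1,1}_{\mathrm{loc}}$ away from $\supp\mu$) produces exactly the PDE system displayed above.

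Once $u$ is in hand I would verify that $\{u>0\}\Subset B_R$ and that $\supp\mu\subset\{u>0\}$, so that the free boundary sits strictly inside $B_R$ and away from the support of $\mu$. The first containment follows from a maximum-principle argument based on the spectral gap, together with comparison against the Helmholtz potential $\Phi_k\ast\mu$ and the smallness constants in \eqref{eq:k-range-first}; the second follows from nondegeneracy of $u$ on $\supp\mu$. Applying Theorem~\ref{thm_pde_quadrature_local} on a neighborhood of $\partial\{u>0\}\subset B_R\setminus\supp\mu$ then gives the $k$-quadrature property of $D$.

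To establish real-analyticity and connectedness of $\partial D$, the key point is that, away from $\supp\mu$, $u$ solves $\Delta u = (1-k^2 u)\chi_D$ with real-analytic right-hand side, so Kinderlehrer--Nirenberg analyticity of the free boundary applies as soon as $\partial D$ is known to consist of regular points in the obstacle-problem sense. I would establish regularity and connectedness by viewing $u$ as a quantitative perturbation of the classical $k=0$ partial-balayage solution with $\mu$ replaced by a point mass of the same total, whose coincidence set is a round ball; a stability argument---for instance via a uniform thickness lower bound for the complement at boundary points, or directional monotonicity inherited from the comparison ball---then rules out singular points and prevents the coincidence set from splitting into multiple components, provided the smallness conditions in \eqref{eq:k-range-first} hold quantitatively.

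Finally, the inequality \eqref{eq:SL-analogue-first} follows from Green's identity: if $w\in L^1(D)\cap L^1(\mu)$ satisfies $(\Delta+k^2)w\geq 0$ in $D$, then using $u=|\nabla u|=0$ on $\partial D$ and $u\in C^{1,1}$,
\[
\int_D w\,dx - \int w\,d\mu = \int_D w\,(\Delta+k^2)u\,dx = \int_D u\,(\Delta+k^2)w\,dx \geq 0,
\]
since $u\geq 0$. The main obstacle I foresee is establishing the real-analyticity and connectedness of $\partial D$: variational existence and the quadrature identity itself follow from coercivity and Theorem~\ref{thm_pde_quadrature_local} in a fairly routine manner, but excluding singular free-boundary points and certifying that the coincidence set remains a connected perturbation of a ball is the genuinely delicate step, and is precisely where the quantitative smallness of both $k$ and $\epsilon$ in \eqref{eq:k-range-first} must be used in a decisive way.
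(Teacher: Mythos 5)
Your overall strategy (an obstacle-problem/partial-balayage construction followed by the PDE characterization) is the same as the paper's, but as written it has genuine gaps, the first of which appears at the very start. For a general positive measure $\mu$ the linear term $\langle\mu,v\rangle$ in your functional $J_k$ is not defined on $H^1_0(B_R)$: a point mass does not belong to $H^{-1}(\mR^n)$ for $n\ge 2$, so the minimization over $\{v\ge 0\}$ is not well posed and coercivity does not save it. The paper's construction only works for measures with bounded density, and the essential device you are missing is the preliminary mollification $\mu\mapsto\mu*h_\delta$ with the mean-value kernel $h_\delta=(c^{\rm MVT}_{n,k,\delta})^{-1}\chi_{B_\delta}$, followed by a transfer back to the original $\mu$: by Proposition~\ref{prop:MVT-metaharmonic} one has $U_k^{\mu*h_\delta}=U_k^{\mu}$ outside a $\delta$-neighborhood of $\supp(\mu)$, and since that neighborhood is swallowed by the non-contact set (this is what the two-step balayage via Lemma~\ref{lem:Ball} and Proposition~\ref{prop:iteration-balayage-operator} arranges, and where both smallness conditions in \eqref{eq:k-range-first} are actually spent), the same set $D$ is a quadrature domain for $\mu$ itself. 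Without this step you only get a statement about a mollified measure, and your claimed regularity $u\in C^{1,1}$ fails near $\supp(\mu)$ whenever $\mu$ is singular, which also undercuts your verification that $\supp\mu\subset\{u>0\}$ and your containment $\{u>0\}\Subset B_R$ (the paper pins these down by explicit Bessel-function bookkeeping, not by a soft maximum-principle remark).

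The second gap is the final display: the identity $\int_D w\,(\Delta+k^2)u\,dx=\int_D u\,(\Delta+k^2)w\,dx$ cannot be obtained by a naive Green's formula when $w$ is merely in $L^1(D)$ with $(\Delta+k^2)w\ge 0$ as a distribution and $u$ is only $C^{1,1}$ with quadratic vanishing at $\partial D$ (and not even $C^{1,1}$ near $\supp\mu$). This is exactly the delicate point the paper treats with a Runge-type density theorem for sub-solutions (Proposition~\ref{prop:prop_runge_lone-subsolution}), the Sakai/Ahlfors--Bers mollifier integration by parts, and then the sub-mean-value inequality of Proposition~\ref{prop:MVT-metaharmonic} to pass from $\mu*h_\delta$ back to $\mu$ in \eqref{eq:SL-analogue-first}; you would need some substitute for all of this. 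Finally, your treatment of analyticity and connectedness of $\partial D$ is only an announced ``stability/perturbation from the $k=0$ ball'' heuristic; the paper's actual mechanism is a moving-plane argument, available precisely because $\supp(\mu*h_\delta)$ sits in a small ball deep inside the non-contact set, which yields local Lipschitz graphs, then $C^1$ by Caffarelli and real-analyticity by Kinderlehrer--Nirenberg, with connectedness following as in Gustafsson because only one component of the non-contact set meets the support. As you yourself note, this is the decisive step, and the proposal does not supply it.
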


\begin{rem}
The assumption $w \in L^1(\mu)$ is in order to ensure that the right-hand side of \eqref{eq:SL-analogue-first} is well defined. 
\end{rem}

Finally we consider the relation of $k$-quadrature domains to the inverse problem of determining the shape of a penetrable obstacle from a single measurement, as discussed in \cite{SS21NonscatteringFreeBoundary}. See \cite{CCH16InverseScatteringTransmission,CK19scattering,Yaf10ScatteringAnalyticTheory} for more details about scattering problems. Let $D \subset \mR^n$ be a bounded open set, and let $h \in L^{\infty}(D)$ satisfy $|h| \ge c > 0$ a.e.\ near $\partial D$ (such a function $h$ is called a \emph{contrast} for $D$). The pair $(D,h)$ describes a penetrable obstacle $D$ with contrast $h$.

We now probe the penetrable obstacle
$(D,h)$ by some incident field $u_{0}$ at frequency $k > 0$. The incident field is a solution of 
\[
(\Delta + k^{2})u_{0} = 0 \quad \text{in}\;\;\mathbb{R}^{n}.
\]
Let $u_{\rm sc}$ be the corresponding scattered field. That is, the unique function $u_{\rm sc}$ so that the total field $u_{\rm tot} = u_{0} + u_{\rm sc}$ satisfies 
\begin{equation}
\begin{cases}
	(\Delta + k^{2} + h \chi_D)u_{\rm tot}=0 & \text{in}\;\;\mathbb{R}^{n},\\
	u_{\rm sc} \text{ satisfies the Sommerfeld radiation condition} & \text{at }|x| \rightarrow \infty.
\end{cases} \label{eq:scattering-problem}
\end{equation}
Here we recall that a solution $u$ of $(\Delta+k^{2})u=0$ in $\mathbb{R}^{n}\setminus\overline{B_{R}}$ (for some $R>0$) satisfies the Sommerfeld radiation condition if 
\[
\lim_{|x|\rightarrow\infty}|x|^{\frac{n-1}{2}}(\partial_{r}u-iku)=0, \quad \text{uniformly in all directions }\hat{x} = \frac{x}{|x|} \in \mathcal{S}^{n-1},
\]
where $\partial_{r}$ denotes the radial derivative. Solutions satisfying the Sommerfeld radiation condition are also called outgoing. The functions $u_0$, $u_{\rm sc}$ and $u_{\rm tot}$ are allowed to be complex. 

The single measurement inverse problem is to determine some properties of the obstacle $D$ from knowledge of the scattered wave $u_{\rm sc}(x)$ when $\abs{x}$ is large. If $D = \emptyset$, then $u_{\rm sc} \equiv 0$, and a related question is to ask whether some nontrivial domain $D$ admits some $h$ and $u_0$ so that $u_{\rm sc} = 0$ for large $x$. Such a penetrable obstacle $(D,h)$ would be invisible when probed by the incident wave $u_0$ and would look like empty space. Domains $D$ having this property for some $h$ and $u_0$ will be called non-scattering domains.

\begin{defn} \label{def_ns_domain}
We say that a bounded open set $D \subset \mR^n$ is a \emph{non-scattering domain} if there is some $h \in L^{\infty}(D)$ with $\abs{h} \geq c > 0$ a.e.\ near $\p D$ and some solution $u_0$ of $(\Delta + k^2) u_0 = 0$ in $\mR^n$ such that the corresponding scattered wave $u_{\rm sc}$ satisfies $u_{\rm sc}|_{\mR^n \setminus \ol{B}_R} = 0$ for some $R > 0$. 
\end{defn}

The following result states that $k$-quadrature domains are also non-scattering domains, at least if there is some incident wave $u_0$ that is positive on $\p D$. By the results in \cite{SS21NonscatteringFreeBoundary} such an incident wave $u_0$ exists at least when 
\begin{itemize}
\item 
$D$ is a $C^1$ domain (Lipschitz if $n=2,3$) so that $\mR^n \setminus \ol{D}$ is connected and $k^2$ is not a Dirichlet eigenvalue of $-\Delta$ in $D$; or 
\item
$D$ is contained in a ball of radius $< k^{-1} j_{\frac{n-2}{2},1}$ where $j_{\frac{n-2}{2},1}$ is the first positive zero of the Bessel function $J_{\frac{n-2}{2}}$. 
\labeltext{$j_{\alpha,1}$ the first positive zero of the Bessel function $J_{\alpha}$}{index:ZeroBessel}
\end{itemize}

By combining Theorem~\ref{thm_pde_quadrature_local} and \cite[Remark~2.4]{SS21NonscatteringFreeBoundary} we deduce the following corollary. 
\begin{cor}\label{cor:k-quad-nonscattering}
Let $D \subset \mR^n$ be a $k$-quadrature domain, and assume that there exists $u_0$ solving $(\Delta + k^2) u_0 = 0$ in $\mR^n$ with $u_0|_{\p D} > 0$. Then $D$ is a non-scattering domain {\rm (}for the incident wave $u_0$ and for some contrast $h${\rm )}.
\end{cor}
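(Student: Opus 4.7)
The plan is to invoke the PDE characterization of $k$-quadrature domains supplied by Theorem~\ref{thm_pde_quadrature_local} and feed its output into the non-scattering construction of \cite[Remark~2.4]{SS21NonscatteringFreeBoundary}, as the statement of the corollary explicitly suggests. The argument will be essentially a one-line combination of these two results, with all the real work already carried out in the two cited statements.

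First, I would apply Theorem~\ref{thm_pde_quadrature_local} to the $k$-quadrature domain $D$ to obtain a neighborhood $U$ of $\partial D$ in $\mathbb{R}^n$ and a distribution $u \in \mathscr{D}'(U)$ satisfying
\[
(\Delta+k^2)u=\chi_D \quad \text{in } U, \qquad u=|\nabla u|=0 \quad \text{in } U\setminus D.
\]
By elliptic regularity, $u$ is $C^{1,1}$ up to $\partial D$ from within $D$, and, together with its vanishing outside $D$, extends to a $C^{1,1}$ function in a full two-sided neighborhood of $\partial D$. This is exactly the kind of ``obstacle type'' data that the non-scattering construction takes as input.

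Next, I would combine this data with the hypothesis $u_0|_{\partial D}>0$ and apply \cite[Remark~2.4]{SS21NonscatteringFreeBoundary}. That remark produces, from $u$ and $u_0$, a compactly supported scattered field $u_{\rm sc}$ (a suitable $C^1$ extension of $u$ vanishing outside $\overline{D}$) together with a contrast $h \in L^\infty(D)$ satisfying $|h|\geq c>0$ near $\partial D$, such that $u_{\rm tot}=u_0+u_{\rm sc}$ solves $(\Delta+k^2+h\chi_D)u_{\rm tot}=0$ in $\mathbb{R}^n$. Because $u_{\rm sc}$ has compact support in $\overline D$, the Sommerfeld radiation condition holds trivially and $u_{\rm sc}$ vanishes outside any ball containing $\overline{D}$, which is precisely the defining property of a non-scattering domain in the sense of Definition~\ref{def_ns_domain}.

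The main subtlety, and the reason the details are relegated to the cited remark, is the definition of $h$ inside $D$. The natural ansatz is
\[
h=-\frac{(\Delta+k^2)u_{\rm sc}}{u_0+u_{\rm sc}} \quad \text{in } D,
\]
which near $\partial D$ reduces to $h=-1/u_0$ and is bounded with $|h|\geq c>0$ by the positivity of $u_0$ on $\partial D$; however, $u_0+u_{\rm sc}$ may vanish in the interior of $D$. The remark handles this by modifying $u_{\rm sc}$ in the interior of $D$ with a suitable cutoff so that the quotient stays in $L^\infty(D)$, while preserving the behavior of $u_{\rm sc}$ and of the equation near $\partial D$ (where the quantitative lower bound $|h|\geq c>0$ is the only nontrivial requirement). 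This completes the reduction and yields the corollary.
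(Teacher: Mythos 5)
Your proof takes exactly the approach the paper indicates: it invokes Theorem~\ref{thm_pde_quadrature_local} to produce the distribution $u$ with $(\Delta+k^2)u=\chi_D$ near $\partial D$ and $u=|\nabla u|=0$ outside $D$, and then feeds this into \cite[Remark~2.4]{SS21NonscatteringFreeBoundary} together with $u_0|_{\partial D}>0$ to build the contrast $h$ and the compactly supported scattered field, which is precisely the one-sentence combination the paper gives as its justification. The only small imprecision is the assertion that elliptic regularity alone yields $C^{1,1}$ up to $\partial D$; Calder\'on--Zygmund estimates for $(\Delta+k^2)u=\chi_D$ give $W^{2,p}_{\rm loc}$ for all $p<\infty$ and hence $C^{1,\alpha}$ for all $\alpha<1$, which is all that the cited remark actually requires, so the argument is unaffected.
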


From Theorem~\ref{thm_quadrature_complex} and Corollary~\ref{cor:k-quad-nonscattering} we see that there exist non-scattering domains with inward cusps for any $k > 0$, extending the corresponding result for $k=0$ in \cite{SS21NonscatteringFreeBoundary}. In contrast, domains having suitable corner points cannot be non-scattering domains for any $k > 0$, i.e.\ ``corners always scatter''. This line of research was initiated in \cite{BPS14CornerScattering} and various further results were obtained in \cite{Bla18CornerScattering,BL17CornerScattering,BL21CornerScatteringSingleFarField,CakoniVogelius,CX21CornerScatteringEllipticOperator,EH15CornersEdgesScatter,PSV17CornerScattering}.

\addtocontents{toc}{\SkipTocEntry}
\subsection{Organization}

We prove Theorems~\ref{thm_pde_quadrature_local} and \ref{thm_quadrature_complex} in \S\ref{sec:PDE-characterization}, respectively \S\ref{sec:complex-analysis}. 
In \S \ref{sec:Partial-balayage2}, we introduce an obstacle problem, and define the partial balayage in terms of the maximizer of such an obstacle problem. We then study the structure of partial balayage in \S \ref{sec:Structure-of-partial} and \S \ref{sec:balayage-iterative}. Using these properties, we prove Theorem~\ref{thm:main2} in~\S\ref{sec:Construction-partial-balayage}. Finally, we provide some details about a real-valued fundamental solution relevant to our construction, some results related to maximum principles, the mean value theorem (MVT), and conformal images of $\mathbb{D}$ in Appendix~\ref{appen:GreenFunction}.

\addtocontents{toc}{\SkipTocEntry}
\subsection*{Acknowledgments}

This project was finalized while the authors stayed at Institute Mittag Leffler (Sweden), during the program Geometric aspects of nonlinear PDE.
The authors would like to express their gratitude to Lavi Karp and Aron Wennman for helpful comments. We would also like to give special thanks to Bj\"orn Gustafsson and the anonymous referee for a careful reading of the manuscript and several detailed suggestions that have improved the presentation. 
In particular, the comments from the anonymous referee have led to improvements in our results. Kow and Salo were partly supported by the Academy of Finland (Centre of Excellence in Inverse Modelling and Imaging, 312121) and by the European Research Council under Horizon 2020 (ERC CoG 770924). Larson was supported by Knut and Alice Wallenberg Foundation grant KAW~2021.0193. Shahgholian was supported by Swedish Research Council.

\section{\label{sec:PDE-characterization}PDE characterization of quadrature domains}

In this section we will prove Theorem \ref{thm_pde_quadrature_local} from the introduction. We begin with a global PDE characterization of $k$-quadrature domains.

\begin{prop} \label{prop_quadrature_pde_equivalence}
Let $k > 0$, and let $D \subset \mR^n$ be a bounded open set. Then $D$ is a $k$-quadrature domain corresponding to $\mu \in \mathscr{E}'(D)$ if and only if there is a distribution $u \in \mathscr{D}'(\mR^n)$ satisfying 
\begin{equation}\label{eq:PDE0}
\begin{cases}
\hfil (\Delta + k^2) u = \chi_D - \mu &\text{ in $\mR^n$}, \\
\hfil u = |\nabla u| = 0 &\text{ in $\mR^n \setminus D$}.
\end{cases}
\end{equation}
\end{prop}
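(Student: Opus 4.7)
I would prove the two implications separately, with the forward direction by explicit potential construction and the reverse by a distributional Green's identity combined with two classical Green's identities on complementary subregions of $D$.

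For the direction $(\Rightarrow)$, fix a real-valued fundamental solution $\Phi_k$ of $\Delta+k^2$ (Appendix~\ref{appen:GreenFunction}) and set $u := \Phi_k * (\chi_D - \mu) \in \mathscr{D}'(\mR^n)$, so that $(\Delta+k^2)u = \chi_D - \mu$ is automatic. To verify the free-boundary conditions, fix $x \in \mR^n \setminus \overline{D}$: the function $y \mapsto \Phi_k(x-y)$ is smooth on a neighborhood of $\overline{D}$ and satisfies $(\Delta_y+k^2)\Phi_k(x-y) = 0$ in $D$, so the quadrature identity applied to this function (and similarly to each of its $x$-derivatives, which remain metaharmonic in $y$ on $D$) yields $u(x) = 0$ and $\nabla u(x) = 0$. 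Since $\supp(\mu) \subset D$, the equation $(\Delta+k^2)u = \chi_D$ holds in a neighborhood of $\partial D$ with bounded right-hand side, so elliptic regularity gives $u \in C^{1,\alpha}$ across $\partial D$ and continuity extends the vanishing to all of $\mR^n \setminus D$.

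For $(\Leftarrow)$, let $w \in L^1(D)$ satisfy $(\Delta+k^2)w = 0$ in $D$, so $w \in C^\infty(D)$ by elliptic regularity. Pick a smooth exhaustion $D_j \Subset D$ with $\supp(\mu) \subset D_j$, and a cutoff $\chi_j \in C_c^\infty(D)$ equal to $1$ on a neighborhood of $\overline{D_j}$ and supported in an open $D'_j \Subset D$. Since $\chi_j w \in C_c^\infty(\mR^n)$, the distributional identity gives
\[
\langle u, (\Delta+k^2)(\chi_j w) \rangle = \int_D \chi_j w\, dx - \langle \mu, w \rangle.
\]
The commutator $(\Delta+k^2)(\chi_j w) = 2 \nabla \chi_j \cdot \nabla w + w \Delta \chi_j$ is supported in the annulus $\supp(\nabla \chi_j) \subset D'_j \setminus D_j$, on which $u$ is smooth and $(\Delta+k^2)u = 1$. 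Classical Green's second identity in this annulus rewrites the left-hand side as $\int_{D'_j \setminus D_j} \chi_j w\, dx + J_j$, where
\[
J_j := \int_{\partial D_j} (w \partial_\nu u - u \partial_\nu w)\, dS
\]
with $\nu$ the outward unit normal of $D_j$, producing $\int_{D_j} w\, dx - \langle \mu, w \rangle = J_j$. A second application of Green's identity on $D \setminus \overline{D_j}$ — where $u$ is $C^{1,\alpha}$ up to $\partial D$ with $u = |\nabla u| = 0$ on $\partial D$, so the $\partial D$ boundary contribution vanishes — gives $-\int_{D \setminus \overline{D_j}} w\, dx = J_j$. Adding the two identities eliminates $J_j$ and yields $\int_D w\, dx = \langle \mu, w \rangle$.

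The main technical obstacle is the coexistence of two singularities: $u$ is only a distribution on $\supp(\mu)$, while $w$ and its gradient may fail to have any boundary regularity at $\partial D$, so neither a single distributional pairing $\langle u, (\Delta+k^2)w\rangle$ nor a single classical Green's identity on all of $D$ suffices. The cutoff-and-shell decomposition separates the two difficulties into disjoint regions (namely $\supp(\mu) \subset D_j$ and $\supp(\nabla \chi_j) \subset D'_j \setminus D_j$), and the free-boundary conditions $u = |\nabla u| = 0$ on $\partial D$ kill the outer boundary contribution. When $\partial D$ is irregular, the outer Green's identity requires additional justification, most naturally by applying it first on $\{x \in D : \dist(x,\partial D) > \delta\} \setminus \overline{D_j}$ and passing $\delta \to 0$, exploiting the $C^{1,\alpha}$-decay of $u$ near $\partial D$ together with co-area control of $w$ and $\nabla w$ on level sets of $\dist(\cdot,\partial D)$.
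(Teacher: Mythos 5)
Your forward direction has a small but genuine gap for general bounded open sets. You verify $u(x)=\nabla u(x)=0$ only for $x\in\mR^n\setminus\ol D$ (where $\Phi_k(x-\cdot)$ is smooth near $\ol D$) and then try to reach the rest of $\mR^n\setminus D$ by continuity; but continuity only extends the vanishing to the closure of $\mR^n\setminus\ol D$, which misses boundary points lying in the interior of $\ol D$ (slit disk, punctured ball), and the proposition demands vanishing on all of $\mR^n\setminus D$. The correct fix, and the paper's argument, is to note that for \emph{every} $z\in\mR^n\setminus D$ (boundary points included) the functions $\p^\alpha\Phi_k(z-\cdot)|_D$ with $\abs{\alpha}\le 1$ are still in $L^1(D)$ and metaharmonic in $D$, so the quadrature identity applies directly at such $z$ and gives $u(z)=\nabla u(z)=0$ pointwise.

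Your reverse direction takes a genuinely different route from the paper (which approximates $w$ in $L^1(D)$ by combinations of $\p^\alpha\Psi_k(z-\cdot)$ via a Runge/Hahn--Banach lemma, Proposition~\ref{prop_runge_lone}, and passes to the limit in $\langle\mu,w_j\rangle$ with an interior $C^m$-versus-$L^1$ estimate), and the cutoff-plus-Green bookkeeping up to the smooth interface $\p D_j$ is correct; treating $\langle\mu,w\rangle$ exactly via $\chi_j\equiv1$ near $\supp\mu$ is a nice simplification. However, the decisive step --- the Green identity on $D\setminus\ol{D_j}$ up to the irregular boundary --- is precisely the delicate point, and your proposed justification does not close. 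Near $\p D$ one only has $(\Delta+k^2)u=\chi_D$ with $L^\infty$ right-hand side, so $u$ is \emph{not} $C^{1,1}$; $C^{1,\alpha}$ decay gives only $\abs{\nabla u}\lesssim \delta(x)^\alpha$ with $\delta(x)=\dist(x,\p D)$, and for $w$ merely in $L^1(D)$ the coarea information $\int_0^{\delta_0} g(t)\,dt<\infty$, $g(t)=\int_{\{\delta=t\}}\abs{w}\,d\mathcal{H}^{n-1}$, does not force $t_i^\alpha g(t_i)\to0$ along any sequence (e.g.\ $g(t)\sim t^{-\alpha}$ is integrable for $\alpha<1$), so the boundary term $\int_{\{\delta=t\}}\abs{w}\,\abs{\nabla u}$ need not vanish in the limit; the term $\int_{\{\delta=t\}}\abs{u}\,\abs{\nabla w}$ likewise needs interior derivative estimates for $w$ and control of the level sets of $\delta$, none of which is supplied. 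What makes this work is the sharper log-Lipschitz gradient estimate (as in \cite[Theorem 3.9]{GT01Elliptic}) giving $\nabla u=O(\delta\log(1/\delta))$ and $u=O(\delta^2\log(1/\delta))$, combined with interior estimates for metaharmonic $w$ and either a quantitative choice of level sets or the smoothed Ahlfors--Bers/Hedberg cutoffs --- exactly the boundary-layer analysis the paper carries out in proving \eqref{runge_ibp} inside Proposition~\ref{prop_runge_lone}. If you import that analysis, your scheme goes through; as written, the key estimate is missing and the appeal to ``$C^{1,\alpha}$-decay plus co-area control'' would fail.
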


Note that even though $u$ is only assumed to be in $\mathscr{D}'(\mR^n)$, the equation $(\Delta+k^2) u = \chi_D$ near $\p D$ and elliptic regularity imply that $u$ is $C^1$ near $\p D$ and hence the condition that $u = |\nabla u| = 0$ in $\mR^n \setminus D$ is meaningful.

\begin{example} (When $\mu$ is a Dirac mass.)
For the case when $D = B_{R}$ with $R>0$, and the measure is a constant multiple of the Dirac mass, we can find an explicit solution $u = u_{k,R}$ of~\eqref{eq:PDE0} in terms of Bessel functions. The general radially symmetric solution of $(\Delta+k^2)u=1$ in $\mathbb{R}^n\setminus\{0\}$ is \labeltext{$Y_{\alpha}$ Bessel function of second kind}{index:BesselSecond}
\[
 u_k(x)= \frac{1}{k^2}+ c_1 |x|^{1-\frac{n}{2}}J_{\frac{n}{2}-1}(k|x|) + c_2 |x|^{1-\frac{n}{2}}Y_{\frac{n}{2}-1}(k|x|).
\]
Given a radius $R>0$ there is a unique choice of the constants $c_1, c_2$ so that $u_{k,R}:=u_k \chi_{B_R} \in C^{1,1}(\mathbb{R}^n\setminus\{0\})$, namely
\begin{align*}
 c_1 = \frac{\pi R^{\frac{n}{2}} Y_{\frac{n}{2}}(k R)}{2k} \quad \mbox{and}\quad c_2 = -\frac{\pi R^{\frac{n}{2}}J_{\frac{n}{2}}(kR)}{2k}. 
\end{align*}
With these choices of coefficients $u_{k,R}$ satisfies 
\begin{align*}
(\Delta+k^2)u_{k,R} & = \chi_{B_{R}} - k^{-\frac{n}{2}}(2\pi R)^{\frac{n}{2}}J_{\frac{n}{2}}(kR)\,\delta \quad \text{in } \mathbb{R}^{n}, \\
u_{k,R}|_{\mathbb{R}^{n} \setminus \overline{B_{R}}} & = 0,
\end{align*}
which gives an example of Proposition~\ref{prop_quadrature_pde_equivalence} with $\mu = k^{-\frac{n}{2}}(2\pi R)^{\frac{n}{2}}J_{\frac{n}{2}}(kR)\, \delta$. 
\end{example}

\begin{example} (When $\mu \equiv 0$.)
Let $D$ be a bounded domain in $\mathbb{R}^{n}$ such that $\partial D$ is homeomorphic to a sphere. The well-known \href{https://www.scilag.net/problem/G-180522.1}{Pompeiu problem} \cite{Pom29PompeiuProblem, Wil76PompeiuProblem, Zalcman1992} asks whether the existence of a nonzero continuous function on $\mathbb{R}^{n}$ whose integral vanishes on all congruent copies of $D$ implies that $D$ is a ball. 
 
The problem can be reformulated in terms of free boundary problems, or in the context of this paper, in terms of null $k$-quadrature domains (i.e., $\mu \equiv 0$). Indeed the assumption in Pompeiu problem is equivalent to the existence of a function $v$ solving the free boundary problem
\begin{equation} \label{pompeiu_lambda}
\Delta v + \lambda v = \chi_{D} \ \hbox{in }\mathbb R^n, \qquad v=0 \ \hbox{outside } D,
\end{equation}
for some $\lambda >0$, see~\cite[Theorem~1]{Wil81PompeiuProblem} and \cite{Wil76PompeiuProblem}. 
If the bounded open set $D$ satisfies the assumptions in the Pompeiu problem and its boundary $\partial D$ is additionally Lipschitz regular, then $\partial D$ is analytic \cite{Wil81PompeiuProblem}. See also \cite{BK82Pompeiu,BST73Pompeiu,Den12SchifferConjecture} for some related results. The so-far unanswered question is: whether $D$ has to be a ball?

The fact that balls (with appropriate radii depending on $k > 0$) solve this problem is evident from the following simple procedure: take the function $u(x) = \abs{x}^{\frac{2-n}{2}} J_{\frac{n-2}{2}}(k \abs{x})$ that solves $\Delta u + k^2 u = 0$ in $\mR^n$, add a constant to $u$ so that one of the local minima (say $|x| = R$) of $u$ reaches the level zero, and then redefine the function to be zero outside $B_R$. After multiplying by a suitable constant, this function obviously solves the free boundary formulation of the Pompeiu problem.
 
An interesting observation is that the solution to the free boundary formulation of the Pompeiu problem thus constructed may change sign. The construction leads to a non-negative solution only if we choose $R$ to be the smallest radius for which $u$ takes a minimum. 

The above discussion also gives an indication of the failure of the application of the classical moving-plane technique for this problem.
\end{example}

We will require the following Runge approximation type result, see e.g.\ \cite[Chapter 11]{AdamsHedberg} for related results. We will follow the argument in  \cite[Lemma~5.1]{Sak84ObstacleGreenObstacles}. 
\begin{prop} \label{prop_runge_lone}
Let $k \geq 0$, and let $D \subset \mR^n$ be a bounded open set. Let $\Psi_k$ be any fundamental solution of $-(\Delta + k^{2})$ and let $\Omega \supset \overline{D}$ be any open set in $\mathbb{R}^{n}$. Then the linear span of
\[
F = \{ \p^{\alpha} \Psi_k(z-\,\cdot\,)|_D \,:\, z \in \Omega \setminus D, \ \abs{\alpha} \leq 1 \}
\]
is dense in 
\[
H_k L^1(D) = \{w \in L^1(D) \,:\, (\Delta+k^2) w = 0 \text{ in }D \}
\]
with respect to the $L^1(D)$ topology.
\end{prop}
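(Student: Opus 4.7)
The plan is to apply Hahn--Banach duality in the style of Sakai's treatment of the $k=0$ case. Since $H_kL^1(D)$ is a closed subspace of $L^1(D)$, it suffices to prove that any $g\in L^1(D)^*=L^\infty(D)$ which kills the span of $F$ also kills every $w\in H_kL^1(D)$. So I would fix such a $g$, extend it by zero outside $D$, and introduce the potential
\[
v(z):=\int_D g(x)\Psi_k(z-x)\,dx=(\Psi_k*(g\chi_D))(z).
\]
Because $-(\Delta+k^2)\Psi_k=\delta_0$ and $g\chi_D$ is bounded with compact support, $v$ solves $(\Delta+k^2)v=-g\chi_D$ in $\mathbb{R}^n$, and elliptic regularity (Calder\'on--Zygmund plus Sobolev embedding) places $v$ in $W^{2,p}_{\mathrm{loc}}(\mathbb{R}^n)\subset C^{1,\alpha}_{\mathrm{loc}}(\mathbb{R}^n)$ for every $p<\infty$ and every $\alpha\in(0,1)$. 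Differentiating under the integral sign identifies the hypothesis on $g$ precisely with the vanishing $v=\nabla v\equiv 0$ on $\Omega\setminus D$.

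The next step is to propagate this vanishing. On the open set $\mathbb{R}^n\setminus\overline D$ the function $v$ is real-analytic and satisfies $(\Delta+k^2)v=0$. Every connected component $V$ of $\mathbb{R}^n\setminus\overline D$ has non-empty boundary contained in $\partial D\subset\Omega$, so a small $\Omega$-neighborhood of any point of $\partial V\cap\partial D$ provides a non-empty open subset of $V\cap(\Omega\setminus\overline D)$ on which $v\equiv 0$. Unique continuation for $\Delta+k^2$ then forces $v\equiv 0$ on $V$, and hence on all of $\mathbb{R}^n\setminus\overline D$. Continuity of $v$ and $\nabla v$ yields $v=\nabla v=0$ on $\partial D$ as well, so extended by zero the function $v$ lies in $C^{1,\alpha}(\mathbb{R}^n)$ with compact support in $\overline D$ and satisfies the quantitative bounds $|v(x)|\leq C\dist(x,\partial D)^{1+\alpha}$ and $|\nabla v(x)|\leq C\dist(x,\partial D)^{\alpha}$ for $x$ near $\partial D$.

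To conclude $\int_D gw\,dx=0$ for each $w\in H_kL^1(D)$, I would use Green's identity together with an interior exhaustion. Interior elliptic regularity makes $w$ real-analytic on $D$; on a smooth exhaustion $D_j=\{x\in D:\dist(x,\partial D)>\epsilon_j\}$ with $\epsilon_j\to 0$ one has $w\in C^\infty(\overline{D_j})$, and Green's formula (valid since $v\in W^{2,p}(D_j)$) gives
\[
\int_{D_j}(\Delta+k^2)v\cdot w\,dx = \int_{\partial D_j}\bigl(w\,\partial_\nu v-v\,\partial_\nu w\bigr)\,dS,
\]
after using $(\Delta+k^2)w\equiv 0$ in $D$. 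The left-hand side converges to $-\int_D gw\,dx$ by dominated convergence, since $g\in L^\infty$ and $w\in L^1(D)$. The main obstacle is showing that the boundary integrals tend to zero, because $w$ and $\nabla w$ may blow up as one approaches $\partial D$. This is handled by combining the H\"older-vanishing bounds for $v$ and $\nabla v$ with a Fubini/co-area argument: since $\int_0^{\epsilon_j}\int_{\partial D_t}|w|\,dS\,dt=\int_{D\setminus D_{\epsilon_j}}|w|\,dx\to 0$ (and likewise for $|\nabla w|$ on fixed compact subsets of $D$), one can extract a subsequence along which the surface integrals $\int_{\partial D_{\epsilon_j}}|w|\,dS$ and $\int_{\partial D_{\epsilon_j}}|\nabla w|\,dS$ grow slowly enough to be absorbed by the factors $\epsilon_j^\alpha$ and $\epsilon_j^{1+\alpha}$ (choosing $\alpha\in(0,1)$ close to $1$). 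This produces $\int_D gw\,dx=0$ and completes the Hahn--Banach argument.
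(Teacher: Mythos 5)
Your first two steps coincide with the paper's own argument: Hahn--Banach, representing an annihilating functional by $g\in L^\infty(D)$, forming the potential $v=\Psi_k*(g\chi_D)$ and reading off $v=\nabla v=0$ on $\Omega\setminus D$ (the unique-continuation propagation to all of $\mR^n\setminus\ol{D}$ is harmless but not needed, since only the vanishing of $v$ and $\nabla v$ on $\partial D\subset\Omega\setminus D$ enters later). The genuine gap is in your last step, and it is exactly the point the paper flags as ``delicate due to the failure of Calder\'on--Zygmund estimates when $p=\infty$''. Because $g\chi_D$ is only bounded, $v$ is not $C^{1,1}$, so your decay rates are $\abs{v}\le C\delta^{1+\alpha}$, $\abs{\nabla v}\le C\delta^{\alpha}$ with $\alpha<1$ strictly, where $\delta=\dist(\cdot,\partial D)$. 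For $w$ merely in $L^1(D)$ the coarea identity gives $\int_0^{\delta}\int_{\partial D_t}\abs{w}\,dS\,dt=\int_{D\setminus D_\delta}\abs{w}\,dx=:m(\delta)\to0$ with \emph{no rate}, and this does not yield levels $t_j\to0$ with $t_j^{\alpha}\int_{\partial D_{t_j}}\abs{w}\,dS\to0$: nothing in the hypotheses prevents $\int_{\partial D_t}\abs{w}\,dS$ from behaving like $t^{-\alpha'}$ with $\alpha<\alpha'<1$, which is integrable near $0$ (hence compatible with $w\in L^1$) while $t^{\alpha}\int_{\partial D_t}\abs{w}\,dS\to\infty$ for \emph{every} sequence of levels. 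Taking ``$\alpha$ close to $1$'' does not rescue this; the selection argument would need $\alpha=1$, i.e. the unavailable $C^{1,1}$ bound $\abs{\nabla v}\lesssim\delta$. The term with $\partial_\nu w$ is in worse shape still: $\nabla w$ need not be integrable up to $\partial D$, so your Fubini bound (which you yourself restrict to compact subsets) gives no control of $\int_{\partial D_t}\abs{\nabla w}\,dS$ as $t\to0$; interior estimates give at best $\int_{\partial D_t}\abs{\nabla w}\,dS\lesssim t^{-2}m(2t)$, and $t^{1+\alpha}\,t^{-2}m(2t)$ again need not vanish. (The non-smoothness of the level sets of the distance function is a further, but repairable, issue.)

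The paper's proof replaces exactly this step by a mechanism that needs no rate information on $w$. First, writing $\Delta v=-g\chi_D-k^2v$ and applying the log-Lipschitz gradient estimate \cite[Theorem~3.9]{GT01Elliptic}, one upgrades the decay to $\abs{v}=O(\delta^2\log(1/\delta))$ and $\abs{\nabla v}=O(\delta\log(1/\delta))$. Then, instead of an exhaustion, one multiplies $w$ by the Ahlfors--Bers/Hedberg cutoffs $\omega_j$, which vanish near $\partial D$, tend to $1$, and satisfy $\abs{\partial^{\beta}\omega_j}\le C_\beta\, j^{-1}\delta^{-\abs{\beta}}(\log 1/\delta)^{-1}$. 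The commutator errors $2\nabla\omega_j\cdot\nabla v+(\Delta\omega_j)v$ are then bounded by $C/j$ uniformly on $D$, so their integrals against $\abs{w}\in L^1(D)$ are $O(1/j)\to0$, while $\int_D(\Delta+k^2)(\omega_j v)\,w\,dx=0$ because $\omega_j v$ is compactly supported in $D$ where $w$ is smooth. That extra factor $1/j$ (unattainable for cutoffs concentrated at a single scale, and precisely what compensates for not knowing how slowly $m(\delta)\to0$) is the missing idea; you should either adopt this mollifier argument or provide a genuinely different way to handle the borderline, but the level-selection as written does not close.
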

\begin{rem}
If the domain $D$ has sufficiently regular boundary it suffices to take $\alpha=0$ in $F$. However, for domains like the slit disk one needs to consider also $\partial^\alpha \Psi_k(z-\,\cdot\,)|_D$ for all $|\alpha|=1$ and $z \in \Omega \setminus D$ below (note that these functions are all in $L^1(D)$). We shall later require a version of this result for sub-solutions (see Proposition~\ref{prop:prop_runge_lone-subsolution}).
\end{rem}

\begin{proof} [Proof of Proposition~{\rm \ref{prop_runge_lone}}]
By the Hahn--Banach theorem, it is enough to show that any bounded linear functional $\ell$ on $L^1(D)$ that satisfies $\ell|_F = 0$ also satisfies $\ell|_{H_k L^1(D)} = 0$. Since the dual of $L^1(D)$ is $L^{\infty}(D)$, there is a function $f \in L^{\infty}(D)$ with 
\[
\ell(w) = \int_D f w \,dx, \qquad w \in L^1(D).
\]
We extend $f$ by zero to $\mR^n$ and consider the function 
\[
u(z) = -(\Psi_k \ast f)(z) \quad \text{for all } z \in\Omega.
\]
By the assumption $\ell|_F = 0$, the function $u$ satisfies 
\[
\begin{cases}
\hfil (\Delta + k^2) u = f &\text{ in } \Omega, \\
\hfil u = |\nabla u| = 0 &\text{ in $\Omega \setminus D$}. 
\end{cases}
\]
We now consider the zero extension of $u$, still denoted by $u$, which satisfies 
\[
\begin{cases}
\hfil (\Delta + k^2) u = f &\text{ in } \mR^n, \\
\hfil u = |\nabla u| = 0 &\text{ in $\mR^n \setminus D$}. 
\end{cases}
\]
Note that since $f \in L^{\infty}$, we have $u \in C^{1,\alpha}$ for any $\alpha < 1$. In order to show that $\ell|_{H_k L^1(D)} = 0$, we take some $w \in H_k L^1(D)$ and compute 
\[
\ell(w) = \int_D f w \,dx = \int_D ((\Delta+k^2)u) w \,dx.
\]
We claim that one can integrate by parts and use the condition $(\Delta+k^2)w=0$ to conclude that 
\begin{equation} \label{runge_ibp}
    \int_D ((\Delta+k^2)u) w \,dx = 0.
\end{equation}
This implies that $\ell|_{H_k L^1(D)} = 0$ and proves the result. However, the proof of \eqref{runge_ibp} is somewhat delicate due to the failure of Calder\'on--Zygmund estimates when $p=\infty$. In the case $k=0$, \eqref{runge_ibp} follows from \cite[Lemma 5.1]{Sak84ObstacleGreenObstacles}. We will verify that the same argument works for $k > 0$.

First observe that $u$ solves  
\[
\Delta u = f - k^2 u \text{ in $\mR^n$}.
\]
Since $f$ and $u$ are $L^{\infty}$, it follows from \cite[Theorem 3.9]{GT01Elliptic} that $\nabla u$ satisfies 
\[
\abs{\nabla u(x) - \nabla u(y)} \leq C \abs{x-y} \log (1/\abs{x-y}), \qquad x, y \in \ol{D}, \ \abs{x-y} < e^{-2}.
\]
Using the condition $u = |\nabla u| = 0$ in $\mR^n \setminus D$, this implies that uniformly for $x \in D$ near $\p D$ one has 
\begin{align*}
    u(x) &= O(\delta(x)^2 \log(1/\delta(x))), \\
    \nabla u(x) &= O(\delta(x) \log(1/\delta(x))),
\end{align*}
where $\delta(x) = \mathrm{dist}(x, \p D)$.

As in \cite[Lemma 5.1]{Sak84ObstacleGreenObstacles} we introduce the sequence $(\omega_j)_{j=1}^{\infty}$ of Ahlfors--Bers mollifiers \cite{Ahl64KleinianGroups,Ber65Approximation} that satisfy $\omega_j \in C^{\infty}(\mR^n)$, $0 \leq \omega_j \leq 1$, $\omega_j = 0$ near $\p D$, $\omega_j = 1$ outside a neighborhood of $\p D$, $\omega_j(x) \to 1$ for $x \notin \p D$, and 
\[
\abs{\p^{\alpha} \omega_j(x)} \leq C_{\alpha} j^{-1} \delta(x)^{-\abs{\alpha}} (\log 1/\delta(x))^{-1} \text{ for $x \notin \p D$},
\]
see \cite[Lemma~4]{Hed73ApproximationFunctions}. We now begin the proof of \eqref{runge_ibp}. One has 
\begin{align*}
\int_D ((\Delta+k^2)u) w \,dx &= \lim_{j \rightarrow \infty} \int_D ((\Delta+k^2)u) \omega_j w \,dx \\
 &= \lim_{j \rightarrow \infty} \int_D \left[ (\Delta+k^2)(\omega_j u) - 2 \nabla \omega_j \cdot \nabla u - (\Delta \omega_j) u \right] w \,dx.
\end{align*}
Using the estimates for $u$ and $\omega_j$, the limits corresponding to the last two terms inside the brackets are zero. Moreover, since $w$ is smooth near $\supp(\omega_j)$, we have 
\[
\int_D (\Delta+k^2)(\omega_j u) w \,dx = \int_D \omega_j u (\Delta + k^2)w \,dx = 0.
\]
This concludes the proof of \eqref{runge_ibp}.
\end{proof}

\begin{proof}[Proof of Proposition~{\rm \ref{prop_quadrature_pde_equivalence}}]
Let $\Psi_k$ be any fundamental solution of $-(\Delta+k^2)$, i.e.\ $\Psi_k \in \mathscr{D}'(\mR^n)$ solves $-(\Delta+k^2) \Psi_k = \delta_0$ in $\mR^n$. In particular, $\Psi_k$ is smooth away from the origin.
If $D$ is a $k$-quadrature domain corresponding to $\mu$, then 
\begin{equation}\label{eq:QI0}
\int_D \p^{\alpha} \Psi_k(z-x) \,dx = \langle \mu, \p^{\alpha} \Psi_k(z-\,\cdot\,) \rangle
\end{equation}
whenever $z \in \mR^n \setminus D$ and $\abs{\alpha} \leq 1$. Let $u = -\Psi_k \ast (\chi_D - \mu)$, which is well defined since $\chi_D - \mu$ is a compactly supported distribution. We see that $(\Delta + k^2)u = \chi_D - \mu$  and $u = |\nabla u| = 0$ in $\mR^n \setminus D$ as required.

Conversely, suppose that $u \in \mathscr{D}'(\mR^n)$ is as in the statement. We easily obtain the quadrature identity for functions $w$ that solve $(\Delta +k^2)w=0$ near $\overline{D}$, since by taking a cutoff $\psi \in C^{\infty}_c(\mR^n)$ with $\psi=1$ near $\overline{D}$ we have 
\[
\int_D w \,dx - \langle w, \mu \rangle = \langle \chi_D-\mu, \psi w \rangle = \langle -(\Delta+k^2)u, \psi w \rangle = \langle u, -(\Delta+k^2)(\psi w) \rangle = 0,
\]
using that the derivatives of $\psi$ vanish near $\supp(u)$.

For general solutions $w \in L^1(D)$ we need another argument. Since $u$ is compactly supported, by the properties of convolution for distributions we have 
\[
u = \delta_0 * u = -(\Delta+k^2) \Psi_k * u= -\Psi_k * (\Delta+k^2)u = -\Psi_k * (\chi_D - \mu).
\]
Using that $u = |\nabla u| = 0$ in $\mR^n \setminus D$, we have 
\begin{equation}
\int_D \p^{\alpha} \Psi_k(z-x) \,dx = \langle \mu, \p^{\alpha} \Psi_k(z-\,\cdot\,) \rangle \label{eq:QI0-patch}
\end{equation}
for all $z \in \mR^n \setminus D$ and $\abs{\alpha} \leq 1$. Now let $w \in L^1(D)$ solve $(\Delta+k^2)w = 0$ in $D$, and use Proposition~\ref{prop_runge_lone} to find a sequence $w_j \in \mathrm{span} \{ \p^{\alpha} \Psi_k(z-\,\cdot\,)|_{D} \,:\, z \in \mR^n \setminus D, \,\abs{\alpha} \leq 1 \}$ with $w_j \to w \in L^1(D)$. In particular, for any $j \geq 1$ we have 
\begin{equation} \label{wj_identity}
\int_D w_j \,dx = \langle \mu, w_j \rangle.
\end{equation}
Since $\mu \in \mathscr{E}'(D)$, there is a compact set $K \subset D$ and an integer $m \geq 0$ such that 
\[
\abs{\langle \mu, \varphi \rangle} \leq C \norm{\varphi}_{C^m(K)}, \qquad \varphi \in C^{\infty}(D).
\]
By elliptic regularity and Sobolev embedding, any $v \in L^1(D)$ with $(\Delta+k^2)v \in H^{s-2}(D)$ satisfies $v \in C^m(K)$ when $s > m + n/2$. By the closed graph theorem this yields the estimate 
\[
\norm{v}_{C^m(K)} \leq C(\norm{v}_{L^1(D)} + \norm{(\Delta+k^2)v}_{H^{s-2}(D)}).
\]
Applying this estimate to $v = w_j - w$ gives 
\begin{equation}
\norm{w_j-w}_{C^m(K)} \leq C \norm{w_j-w}_{L^1(D)}. \label{eq:patch1-approximation}
\end{equation}
Thus we may take limits as $j \to \infty$ in \eqref{wj_identity} and obtain that 
\[
\int_D w \,dx = \langle \mu, w \rangle.
\]
This shows that $D$ is a $k$-quadrature domain for $\mu$.
\end{proof}

\begin{proof}[Proof of Theorem~{\rm \ref{thm_pde_quadrature_local}}]
If $D$ is a $k$-quadrature domain corresponding to $\mu$, then taking a neighborhood $U$ of $\p D$ that is disjoint from $\supp(\mu)$ and restricting the distribution from Proposition~\ref{prop_quadrature_pde_equivalence} to $U$ gives the required $u \in \mathscr{D}'(U)$ satisfying \eqref{eq:local-PDE-characterization}.

Conversely, assume that $u \in \mathscr{D}'(U)$ satisfies \eqref{eq:local-PDE-characterization}. Let $\psi \in C^{\infty}_c(U)$ satisfy $0 \leq \psi \leq 1$ and $\psi = 1$ near $\p D$, and define $\tilde{u} = \psi u \in \mathscr{D}'(\mR^n)$. Also define 
\begin{equation} \label{mu_def_quadrature}
\tilde{\mu} := \chi_D - (\Delta + k^2)\tilde{u}.
\end{equation}
Then $\tilde{u}$ satisfies 
\[
\begin{cases}
\hfil (\Delta + k^2)\tilde{u} = \chi_D - \tilde{\mu} &\text{ in }\mR^n, \\
\hfil \tilde{u} = |\nabla \tilde{u}| = 0 & \text{ in $\mR^n \setminus D$}.
\end{cases}
\]
Moreover, $\tilde{\mu} \in \mathscr{D}'(\mR^n)$ satisfies $\supp(\tilde{\mu}) \subset D$ by the assumption on $u$. Then $D$ is a $k$-quadrature domain by Proposition~\ref{prop_quadrature_pde_equivalence}.  By elliptic regularity $u$ is smooth in $U \cap D$,  and thus $\tilde{\mu}$ coincides with a smooth function in $D$. Since one also has $\supp(\tilde{\mu}) \subset D$‚ it follows that $\tilde{\mu}$ has a smooth density with respect to Lebesgue measure.
\end{proof}

\section{\label{sec:complex-analysis}Quadrature domains with cusps}

This section contains the proof of Theorem~\ref{thm_quadrature_complex}. The proof will employ the following simple fact regarding the vanishing order of solutions. In this section all functions are allowed to be complex valued.

\begin{lem} \label{lemma_vo}
Let $v$ be a $C^{\infty}$ function near some $x_0 \in \mathbb{R}^n$ satisfying 
\[
\begin{cases}
\hfil \Delta v = O(\abs{x-x_0}^m) \text{ near }x_0, \\
\hfil v|_S = \partial_{\nu} v|_S = 0,
\end{cases}
\]
where $m \geq 0$ is an integer, $S$ is a smooth hypersurface through $x_0$, and $\partial_\nu$ denotes the derivative in the normal direction to $S$. Then one has $v = O(\abs{x-x_0}^{m+2})$, and more precisely 
\[
v = \sum_{\abs{\alpha}=m+2} v_{\alpha}(x)(x-x_0)^{\alpha}
\]
where $v_{\alpha}$ are smooth near $x_0$.
\end{lem}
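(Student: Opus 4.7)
My plan is as follows. After translating so that $x_0 = 0$, I would choose a smooth local diffeomorphism $\Phi$ that straightens the hypersurface $S$ to the flat hyperplane $\{y_n = 0\}$. Pulling back, the function $\tilde{v} = v \circ \Phi^{-1}$ is smooth near $0$ and satisfies $L \tilde{v} = O(|y|^m)$ for a second-order linear operator
\[
L = \sum_{i,\ell} a^{i\ell}(y)\partial_i\partial_\ell + \sum_i b^i(y) \partial_i + c(y)
\]
with smooth coefficients. The ellipticity of $\Delta$ is preserved by $\Phi$, and non-characteristicity of $\{y_n = 0\}$ then forces $a^{nn}(0) > 0$. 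The Cauchy data transport to $\tilde{v}(y', 0) \equiv 0$ and $\partial_n \tilde{v}(y', 0) \equiv 0$.

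The heart of the argument is to show that every Taylor coefficient $c_\alpha := \partial^\alpha \tilde{v}(0)$ with $|\alpha| \leq m+1$ vanishes. The Cauchy data immediately yield $c_\alpha = 0$ whenever $\alpha_n \leq 1$ (tangential derivatives of vanishing Cauchy data vanish). The remaining coefficients I would extract by a nested induction: a main induction on $|\alpha|$ from $2$ up to $m+1$, and within each level a sub-induction on $\alpha_n$ from $2$ upwards. Given $|\alpha| = j+1 \leq m+1$ with $\alpha_n \geq 2$, set $\beta = \alpha - 2e_n$, so $|\beta| = j - 1 \leq m-1$. Since $L\tilde{v} = O(|y|^m)$, one has $\partial^\beta (L \tilde{v})(0) = 0$. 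Expanding via the Leibniz rule, the contribution $a^{nn}(0) c_\alpha$ isolates, and every remaining term takes the form $(\partial^\gamma a^{i\ell})(0) \, c_{\beta - \gamma + e_i + e_\ell}$ (or comes from the lower-order pieces $b^i \partial_i$, $c$) with multi-index $\delta := \beta - \gamma + e_i + e_\ell$ satisfying either $|\delta| < j+1$ (hence killed by the main induction) or $|\delta| = j+1$ together with $\delta_n \leq \alpha_n - 1$ (hence killed by the sub-induction). Since $a^{nn}(0) \neq 0$, the identity collapses to $a^{nn}(0) c_\alpha = 0$ and so $c_\alpha = 0$.

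Once all derivatives of order $\leq m+1$ of $\tilde{v}$ at $0$ have been shown to vanish, the chain rule (or Fa\`a di Bruno) transports the same vanishing to $v$ at $x_0$; Taylor's theorem with integral remainder then produces smooth functions $v_\alpha$ with
\[
v(x) = \sum_{|\alpha| = m+2} v_\alpha(x)(x-x_0)^\alpha
\]
in a neighbourhood of $x_0$, which in particular gives the bound $v = O(|x-x_0|^{m+2})$.

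The main obstacle is the bookkeeping in the nested induction. One must verify that, after isolating the leading term $a^{nn}(0) c_\alpha$ in the Leibniz expansion of $\partial^\beta(a^{i\ell}\partial_i\partial_\ell \tilde{v})(0)$, every other term either has strictly lower total degree or the same degree $j+1$ but strictly smaller $n$-index. The former happens as soon as $|\gamma| \geq 1$ (one loses a derivative on $\tilde{v}$ for every derivative placed on $a^{i\ell}$), and the latter follows because when $(i,\ell) \neq (n,n)$ we have $\delta_n = \beta_n + \delta_{in} + \delta_{\ell n} \leq \beta_n + 1 = \alpha_n - 1$. Both observations are elementary, but they must be articulated precisely in order for the two layers of induction to close on the single equation produced by $L\tilde{v} = O(|y|^m)$.
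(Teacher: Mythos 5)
Your proof is correct, and it takes a genuinely different route from the paper's. You flatten $S$ to the hyperplane $\{y_n=0\}$, turning $\Delta$ into a variable-coefficient elliptic operator $L$ with $a^{nn}(0)>0$, and then kill all Taylor coefficients of $\tilde v$ up to order $m+1$ by a purely algebraic nested induction (on the total order, and within each order on $\alpha_n$), driven only by the Leibniz rule, the vanishing of $\partial^\beta(L\tilde v)(0)$ for $|\beta|\le m-1$, and non-characteristicity; the conclusion then follows from Taylor's formula with integral remainder, exactly as you say. The paper instead works with the curved surface directly: it splits $v$ into homogeneous Taylor polynomials $P_2,\dots,P_{m+1}$ plus a remainder of the desired form, observes that each $P_j$ is harmonic (the degree count on $\Delta v=O(|x|^m)$), uses curves $\gamma\subset S$ tangent to the hyperplane $\{x_n=0\}$ to show each $P_j$ has vanishing Cauchy data on that hyperplane, and then invokes unique continuation for harmonic polynomials to conclude $P_j\equiv 0$ iteratively (with a blow-up argument sketched as an alternative in a footnote). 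Your version buys generality and avoids unique continuation altogether --- it would apply verbatim to any second-order operator for which $S$ is non-characteristic --- at the cost of the coordinate change and the Leibniz bookkeeping you correctly identify as the delicate point; the paper's version stays with the Laplacian, needs no change of variables, and replaces the combinatorics by the structural fact that the low-order Taylor pieces are harmonic with flat Cauchy data on the tangent plane.
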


\begin{proof}
After a rigid motion, we may assume that $x_0 = 0$ and the normal of $S$ satisfies $\nu(0) = e_n$. We use the Taylor formula and write $v$ as 
\[
v = \sum_{j=0}^{m+1} P_j + R, \qquad R = \sum_{\abs{\alpha}=m+2} v_{\alpha}(x)(x-x_0)^{\alpha} ,
\]
where each $P_j$ is a homogeneous polynomial of degree $j$ and each $v_{\alpha}$ is smooth. Using the assumption $v|_S = \partial_{\nu} v|_S = 0$ we have $P_0 = P_1 = 0$. Moreover, the assumption for $\Delta v$ implies that 
\[
\sum_{j=2}^{m+1} \Delta P_j = O(\abs{x}^m).
\]
Since the left hand side is a polynomial of degree $m-1$, it follows that we must have $\Delta P_j = 0$ for $2 \leq j \leq m+1$.

Suppose that $\gamma$ is a smooth curve on $S$ with $\gamma(0) = 0$ and $\dot{\gamma}(0) = \omega$ where $\omega \perp e_n$ and $\abs{\omega} = 1$. Since $v|_S = \p_{\nu} v|_S = 0$, we have 
\begin{align}
0 &= v(\gamma(t)) = \sum_{j=2}^{m+1} \abs{\gamma(t)}^j P_j(\gamma(t)/\abs{\gamma(t)}) + O(\abs{\gamma(t)}^{m+2}), \label{vs1} \\
0 &= \p_{\nu} v(\gamma(t)) = \sum_{j=2}^{m+1} \abs{\gamma(t)}^{j-1} \nu(\gamma(t)) \cdot \nabla P_j(\gamma(t)/\abs{\gamma(t)}) + O(\abs{\gamma(t)}^{m+1}). \label{vs2}
\end{align}
Since $\gamma(t) = t \omega + O(t^2)$, we have $\gamma(t)/\abs{\gamma(t)} \to \omega$ as $t \to 0$. If one would have $P_2(\omega) \neq 0$, then multiplying \eqref{vs1} by $t^{-2}$ would lead to a contradiction as $t \to 0$. Similarly $\p_n P_2(\omega) \neq 0$ would lead to a contradiction with \eqref{vs2}. Thus $P_2(\omega) = \p_n P_2(\omega) = 0$. Varying $\omega$ implies that
\[
P_2|_{x_n=0} = \p_n P_2|_{x_n=0} = 0.
\]
But since $\Delta P_2 = 0$, unique continuation implies that $P_2 \equiv 0$. Iterating this argument shows that $P_2 \equiv \ldots \equiv P_{m+1} = 0$ as required.\footnote{ Lemma~\ref{lemma_vo} can also be proved by a simple blow-up argument. Starting with a quadratic blow-up one obtains $P_2$ in the limit and $S$ becomes a hyperplane $\{x_n=0\}$, along with the zero Cauchy-data for $P_2$. This implies $P_2\equiv 0$. Repeating this argument, by a cubic scaling we obtain $P_3 \equiv 0$. Iterating this argument we have $P_j \equiv 0$ for all $j \leq m+1$.}
\end{proof}

We are now ready to prove Theorem~\ref{thm_quadrature_complex}. As we shall illustrate in Examples \ref{exa:cardioid}--\ref{exa:curve-cusps} the domain $D$ may have inward cusps and the map $\varphi$ is not necessarily injective on $\p \dd$, which introduces some technicalities in the argument.

\begin{proof} [Proof of Theorem~{\rm \ref{thm_quadrature_complex}}]
Let $D = \varphi(\dd)$ where $\varphi$ is analytic near $\ol{\dd}$ and injective in $\dd$. Note that $D$ is an open set by the open mapping theorem for analytic functions \cite[Theorem~10.32]{Rud87RealComplexAnalysis}. We claim:
\begin{equation} \label{varphi_boundary}
 \text{if $z_j \in \dd$ and $d(\varphi(z_j), \p D) \to 0$, then $d(z_j, \p \dd) \to 0$.}
\end{equation}
To see \eqref{varphi_boundary}, we argue by contradiction and assume that $d(\varphi(z_j), \p D) \to 0$ but there is $\eps > 0$ and a subsequence $(z_{j_k})$ with $d(z_{j_k}, \p \dd) \geq \eps$. After passing to another subsequence also denoted by $(z_{j_k})$, we have $z_{j_k} \to z \in \dd$ with $d(z, \p \dd) \geq \eps$. However, since $d(\varphi(z_j), \p D) \to 0$ we must have $d(\varphi(z), \p D) = 0$. This contradicts the fact that $\varphi(\dd) = D$, proving \eqref{varphi_boundary}.

Next we show that 
\begin{equation} \label{varphi_boundary1}
 \varphi(\p \dd) = \p D.
\end{equation}
We begin the proof of \eqref{varphi_boundary1} by taking $z \in \p \dd$ and showing that $\varphi(z) \in \p D$. By continuity $\varphi(z) \in \ol{D}$. If one had $\varphi(z) \in D$, then since $\varphi$ is bijective $\dd \to D$ there would be some $z' \in \dd$ with $\varphi(z') = \varphi(z)$. For any $\eps <|z-z'|/2$ we consider the open sets $\varphi(B_\eps(z'))$ and $\varphi(B_{\eps}(z)\cap \dd)$. The point $\varphi(z)$ is contained in the interior of the first set and in the closure of the second, in particular the two sets are not disjoint. This contradicts the assumption that $\varphi$ is injective, and thus proves that $\varphi(\partial \dd) \subset \partial D$. For the converse inclusion, if $x \in \p D$ and $x_j \to x$ where $x_j \in D$, then $x_j = \varphi(z_j)$ for some $z_j \in \dd$. After passing to a subsequence we may assume that $z_{j_k} \to z \in \ol{\dd}$, and by \eqref{varphi_boundary} one must have $z \in \p \dd$. Thus $x = \lim \varphi(z_j) = \varphi(z)$, proving \eqref{varphi_boundary1}.

By Theorem \ref{thm_pde_quadrature_local}, the result will follow if we can find a distribution $u$ near $\p D$ solving 
\begin{equation} \label{u_complex_required}
\begin{cases}
(\Delta + k^2) u = \chi_D & \text{near}\;\;\p D, \\
u = |\nabla u| = 0 & \text{outside}\;\;D.
\end{cases}
\end{equation}
By the chain rule
(see Lemma \ref{lemma_varphi}),
the equation $(\Delta+k^2) u = 1$ in some set $\varphi(U_1)$, with $U_1 \subset \dd$ open, is equivalent to the equation 
\[
(\Delta + k^2 \abs{\varphi'}^2)(u \circ \varphi) = \abs{\varphi'}^2 \text{ in $U_1$.}
\]
Since $|\varphi'|^{2}=\p \varphi \ol{\p \varphi}$ is real-analytic near $\p \dd$, the Cauchy--Kowalevski theorem implies that there exists a neighborhood $U$ of $\partial \dd$ and a function $\hat{u}$ which is real-analytic in $U$ such that 
\begin{equation}
\begin{cases}
	(\Delta+k^{2}|\varphi'|^{2})\hat{u}=|\varphi'|^{2} & \text{in}\;\;U,\\
	\hat{u}=\partial_{\nu}\hat{u}=0 & \text{on}\;\;\partial \dd.
\end{cases}\label{eq:aux-tildeu0}
\end{equation}
By \eqref{varphi_boundary1} and the open mapping theorem we know that $V = \varphi(U)$ is an open neighborhood of $\partial D$. We define 
\[
u(x):=\begin{cases}
	\hat{u}(\varphi^{-1}(x)) & \text{for all}\;\;x\in V \cap D,\\
	0 & \text{for all}\;\;x \in V \setminus D.
\end{cases}
\]

The function $u$ is defined piecewise and it satisfies \eqref{u_complex_required} away from $\p D$. If we can prove that $u \in C^{1,1}(V)$, then $u$ will satisfy \eqref{u_complex_required} also near $\p D$ and the proof of the theorem will be concluded. Note that by the inverse function theorem, $u$ is smooth in $V \cap D$. We would like to show that $u$ is continuous up to $\p D$. If $x \in \p D$ and $x_j \in V \cap D$ satisfy $x_j \to x$, then $x_j = \varphi(z_j)$ for some $z_j \in \dd$. Then $d(\varphi(z_j), \p D) \to 0$, and \eqref{varphi_boundary} ensures that $d(z_j, \p \dd) \to 0$. It follows that 
\[
u(x_j) = \hat{u}(z_j) \to 0
\]
since $\hat{u}$ is Lipschitz near $\p \dd$ and $\hat{u}|_{\p \dd} = 0$. This shows that $u \in C^0(V)$.

Next we show that $u$ is $C^1$ up to $\p D$. Let $x \in \p D$ and $x_j \in D$ with $x_j \to x$. It is enough to show that for any $\eps > 0$ there is $j_0$ such that $\abs{\nabla u(x_j)} \leq \eps$ for $j \geq j_0$. Now $x_j = \varphi(z_j)$ where $z_j \in \dd$, and by the chain rule one has 
\[\p u(\varphi(z)) = \frac{\p \hat{u}(z)}{\varphi'(z)}, \qquad \dbar u(\varphi(z)) = \frac{\dbar \hat{u}(z)}{\ol{\varphi'(z)}}.
\]
Thus 
\begin{equation} \label{u_chain_rule}
\abs{\nabla u(\varphi(z))} = \frac{\abs{\nabla \hat{u}(z)}}{\abs{\varphi'(z)}}.
\end{equation}
Using \eqref{varphi_boundary} we know that $d(z_j, \p \dd) \to 0$, and thus $\nabla \hat{u}(z_j) \to 0$ since $\nabla \hat{u}|_{\p \dd} = 0$. However, $\varphi'(z_j)$ may also converge to zero and this requires some care. We start by observing that there are only finitely many points $z_0 \in \p \dd$ with $\varphi'(z_0) = 0$, and near any such $z_0$ one can write 
\[
\varphi(z) = \varphi(z_0) + (z-z_0)^2 g(z) ,
\] 
for some analytic function $g$. Since $\varphi\colon \mathbb{D}\to D$ is bijective it follows that $\varphi''(z_0)\neq 0$ and hence $g(z_0)\neq 0$ (see Remark~\ref{rem:cusp_discussion}). Thus $\abs{\varphi'(z)}^2 = O(\abs{z-z_0}^{2})$. Using Lemma~\ref{lemma_vo} we know that 
\begin{equation} \label{palphau_estimate}
\p^{\alpha} \hat{u}(z) = O(|z - z_{0}|^{4-\abs{\alpha}}) \text{ for $\abs{\alpha} \leq 4$ and for all } z \text{ near } z_{0}.
\end{equation}
By \eqref{u_chain_rule}, for $z \in \dd$ near $z_0$ we have 
\[
|\nabla u(\varphi(z))| \leq C\frac{|z-z_0|^{3}}{|z-z_{0}|} \leq C |z-z_0|^2.
\]
Thus there is $\delta > 0$ such that 
\[
|\nabla u(\varphi(z))| \leq \eps \text{ when } z \in W := \bigcup_{z_0 \in \p \dd, \varphi'(z_0) = 0} (B(z_0,\delta) \cap \dd).
\]
We have $\p \dd \subset W \cup W'$ where $W'$ is some open set with $\abs{\varphi'(z)} \geq c > 0$ for $z \in W'$. We already know that $\abs{\nabla u(\varphi(z_j))} \leq \eps$ when $z_j \in W$, and for $z_j \in W'$ the expression \eqref{u_chain_rule} gives that 
\[
\abs{\nabla u(\varphi(z_j))} \leq \frac{1}{c} |\nabla\hat{u}(z_j)|
\]
which becomes $\leq \eps$ when $j \geq j_0$ for some sufficiently large $j_0$ by \eqref{varphi_boundary}. This concludes the proof that $u \in C^1(V)$. 

Finally, we use the chain rule again and observe that for $z \in \dd$ one has 
\[
|\nabla^{2}u(\varphi(z))| \le C\bigg(\frac{|\nabla^{2}\hat{u}(z)|}{|\varphi'(z)|^{2}}+\frac{|\nabla\hat{u}(z)||\varphi''(z)|}{|\varphi'(z)|^{3}}\bigg).
\] 
As before, the worst case is when $z$ is close to some $z_0 \in \p \dd$ with $\varphi'(z_0) = 0$. By \eqref{palphau_estimate}, for $z$ near $z_0$ one has 
\[
|\nabla^{2}u(\varphi(z))| \le C\bigg(\frac{|\nabla^{2}\hat{u}(z)|}{|z-z_{0}|^{2}}+\frac{|\nabla\hat{u}(z)|}{|z-z_{0}|^{3}}\bigg) \leq C.
\]
It follows that $\nabla u$ is Lipschitz continuous in $V$. In fact it is Lipschitz in $V \cap D$ and $V \setminus D$, and if $x \in V \cap D$ and $y \in V \setminus D$ we let $y_1$ be a closest point to $x$ in $\mR^n \setminus D$ (so that $y_1 \in \p D$) and observe that 
\[
\abs{\nabla u(x) - \nabla u(y)} = \abs{\nabla u(x) - \nabla u(y_1)} \leq C \abs{x-y_1} \leq C \abs{x-y}.
\]
This proves that $u \in C^{1,1}(V)$, and therefore concludes the proof of Theorem~\ref{thm_quadrature_complex}.
\end{proof}

\begin{rem} \label{rem:cusp_discussion}

Let $D = \varphi(\dd)$ where $\varphi$ is an analytic function near $\overline{\dd}$ which is injective in $\dd$. In this remark we clarify what $\p D$ looks like. Recall from \eqref{varphi_boundary1} that $\varphi(\p \dd) = \p D$. We may divide the boundary points in three categories.
\begin{enumerate}
\renewcommand{\labelenumi}{\theenumi}
\renewcommand{\theenumi}{(\roman{enumi})}
\item \label{itm:smooth-points}
(Smooth points) If $x_0 \in \p D$ is of the form $x_0 = \varphi(z_0)$ for a unique $z_0 \in \p \dd$ and $\varphi'(z_0) \neq 0$, then by the inverse function theorem $D$ near $x_0$ is given by the region above the graph of a real-analytic function.
 
\item \label{itm_cusppoints_varphi}
(Inward cusp points) If $x_0 \in \p D$ is of the form $x_0 = \varphi(z_0)$ for some $z_0 \in \p \dd$ with $\varphi'(z_0) = 0$, then $\varphi''(z_0) \neq 0$ since if $\varphi$ vanished to higher order the bijectivity in $\dd$ would fail in the same way that it does for $z \mapsto z^m$, $m > 2$, around $z= 0$ (the image of an arbitrary half-plane covers $\mathbb{C} \setminus \{0\}$ more than once). Thus $\varphi$ behaves near $z_0$ like $z \mapsto z^2$ which produces an inward cusp.
 
\item
(Double points) If $x_0 \in \p D$ satisfies $x_0 = \varphi(z_1) = \varphi(z_2)$ for two distinct $z_1, z_2 \in \p \dd$, then by the bijectivity $\varphi'(z_1)\neq 0$ and $\varphi'(z_2)\neq 0$ and there exists an $r>0$ small enough so that $\partial D \cap B_r(x_0)$ is the union of two analytic arcs whose intersection is $\{x_0\}$ where the arcs touch (by injectivity they do not cross).
\end{enumerate}
Moreover, there are only finitely many points which fail to be in category \ref{itm:smooth-points}.
\end{rem}

This classification of the points on the boundary of $D$ is rather classical. Remark~\ref{rem:cusp_discussion} is also related to Sakai's regularity theorem, see \cite[Theorem~5.2]{Sak91RegularityTheorem} as well as \cite[Section~3.2]{LM16QuadratureDomain}. 
However, since we were unable to find a direct proof of the fact that the set of non-smooth points is finite under our assumptions we provide a sketch of the argument in Appendix~\ref{appen: Riemann mapping regularity}. 

\begin{example} [Figure~\ref{fig:cardioid}] \label{exa:cardioid}
Let $\varphi(z) = z + \frac{1}{2} z^2$ and $D = \varphi(\dd)$. Then $D$ is a cardioid whose boundary is smooth except at the point $\varphi(-1) = -1/2$ where it has an inward cusp. It is clear that $\varphi$ satsifies the conditions of Theorem~\ref{thm_quadrature_complex}. Similarly, if $\varphi(z) = z + \frac{1}{m} z^{m}$ for integer $m \ge 2$ then $D$ has $m-1$ inward cusps. 
\end{example}

\begin{example} [Figure~\ref{fig:double-and-inward-cusps}] \label{exa:double-and-inward-cusps}
Let $\varphi(z) = z - \frac{2\sqrt{2}}{3}z^{2} + \frac{1}{3}z^{3}$ and $D = \varphi(\dd)$ (see e.g.~\cite[equation~(1.9)]{LM16QuadratureDomain}). Then the corresponding domain $D$ is not a Jordan domain and furthermore its boundary has inward cusps. By Theorem~\ref{thm_quadrature_complex}, the domain $D$ is a $k$-quadrature domain. 
\end{example}

\begin{example} [Figure~\ref{fig:curve-cusps}] \label{exa:curve-cusps}
Let $\varphi(z) = (z-1)^{2} - (1 - \frac{i}{2} ) (z-1)^{3}$ and $D = \varphi(\dd)$. 
The domain $D$ looks similar to a cardioid, but with an inward cusp which is curved in such a manner that the $\partial D$ cannot locally be represented as the graph of a function. It is also a $k$-quadrature domain by Theorem~\ref{thm_quadrature_complex}.
\end{example}

\begin{figure}[H]
\centering
\subfloat[$m=2$]{
		\begin{centering}
		\includegraphics[scale=0.15]{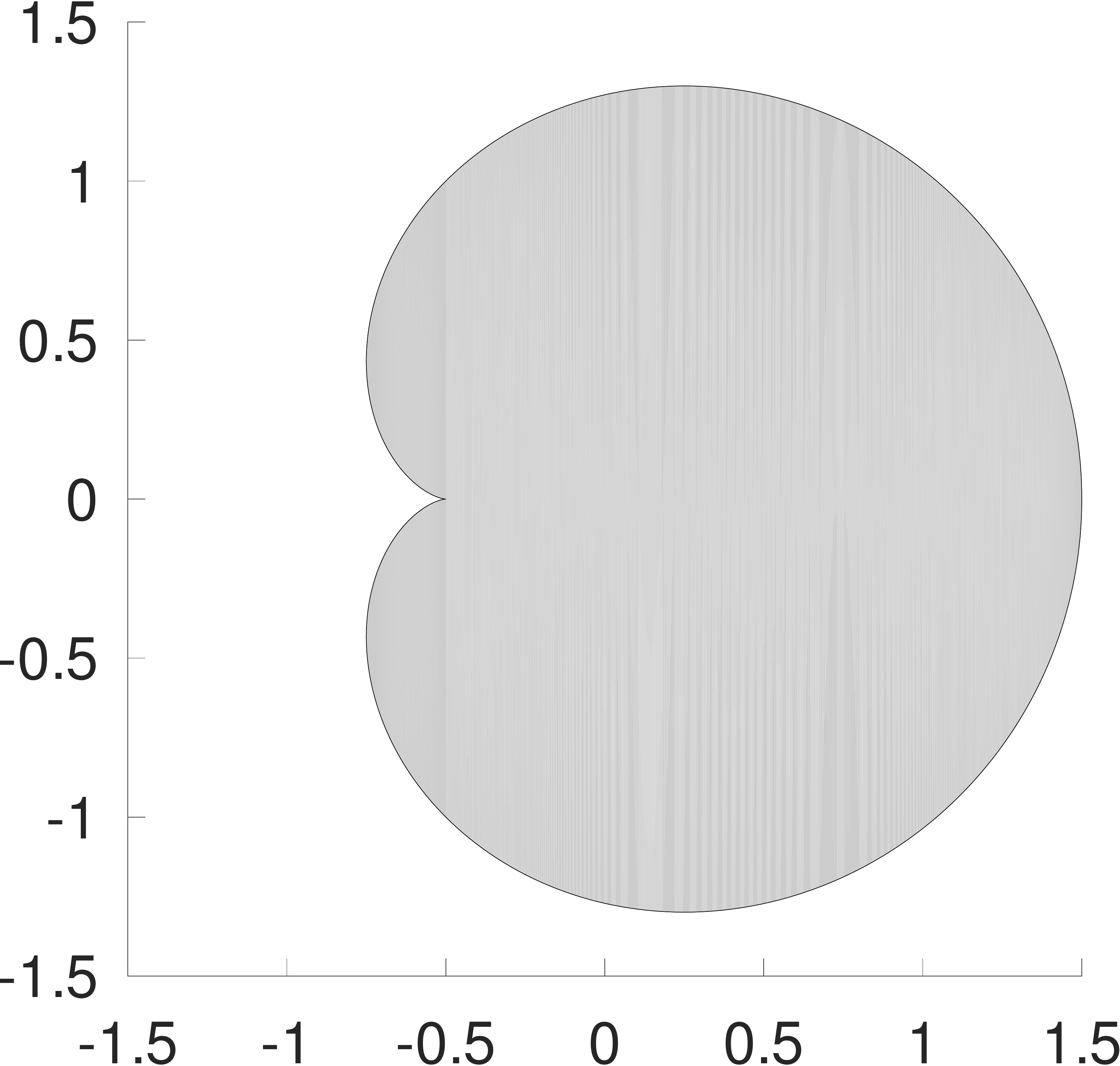}
		\par\end{centering}
}
\subfloat[$m=4$]{
		\begin{centering}
		\includegraphics[scale=0.15]{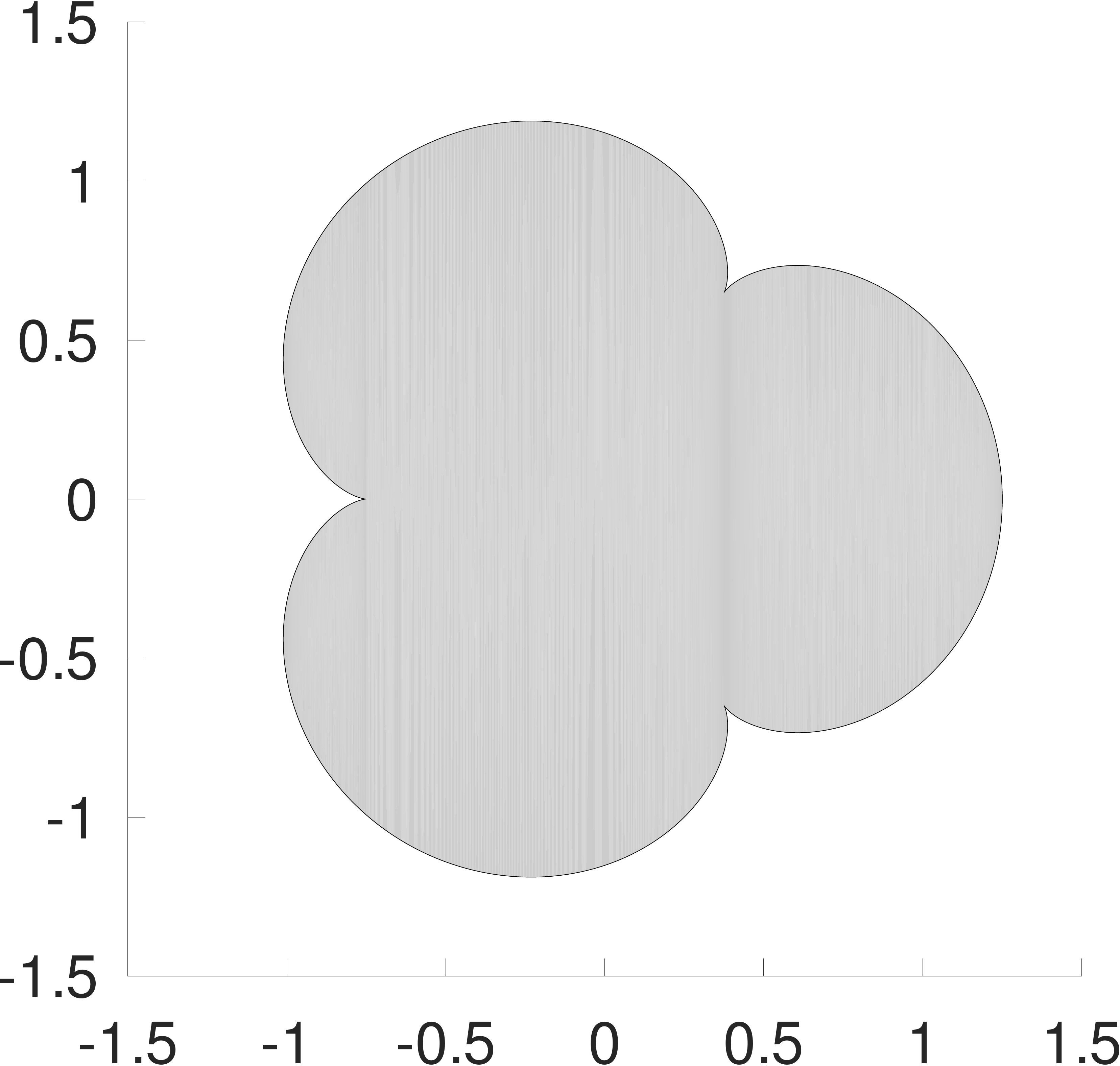}
		\par\end{centering}
}
\caption{\label{fig:cardioid} Plot of Example~\ref{exa:cardioid} (GNU Octave)}
\end{figure}		

\begin{figure}[H]
\subfloat[Global view]{
 \begin{centering}
		\includegraphics[scale=0.15]{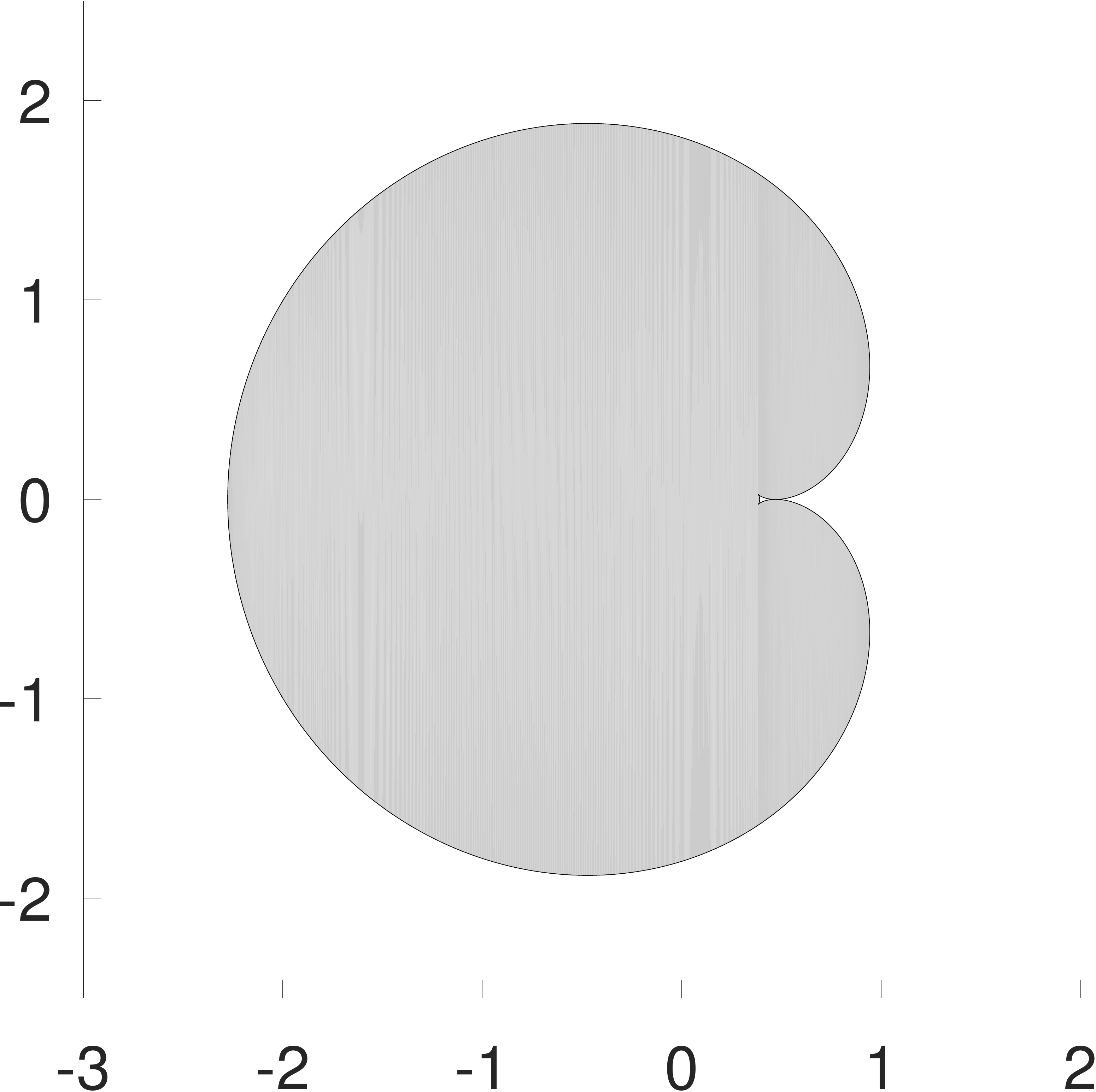}
		\par\end{centering}
}
\subfloat[Near points of interest]{
 \begin{centering}
		\includegraphics[scale=0.15]{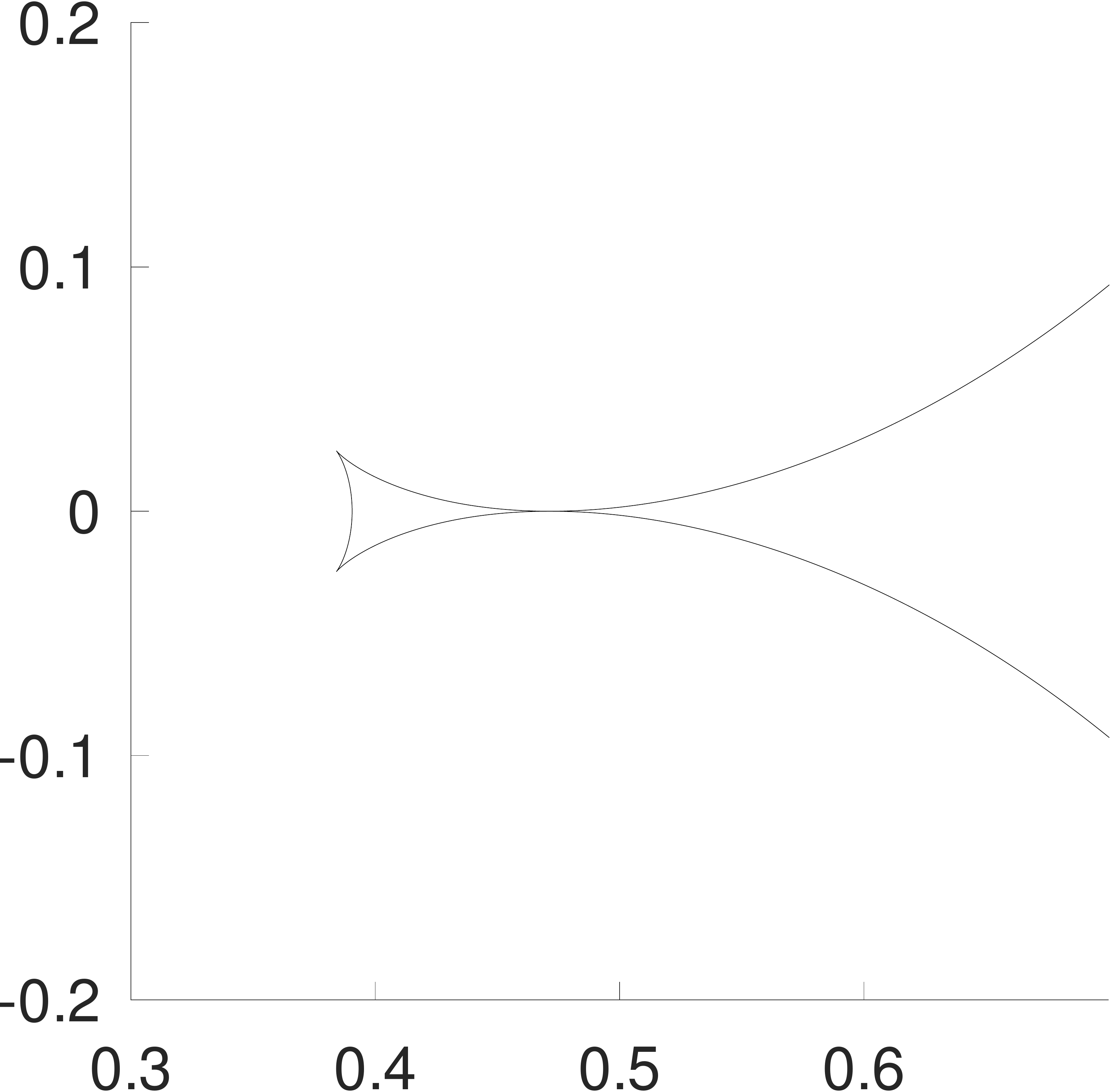}
		\par\end{centering}
}
\caption{\label{fig:double-and-inward-cusps} Plot of Example~\ref{exa:double-and-inward-cusps} (GNU Octave)}
\end{figure}

\begin{figure}[H]
\subfloat[Global view]{
 \begin{centering}
		\includegraphics[scale=0.15]{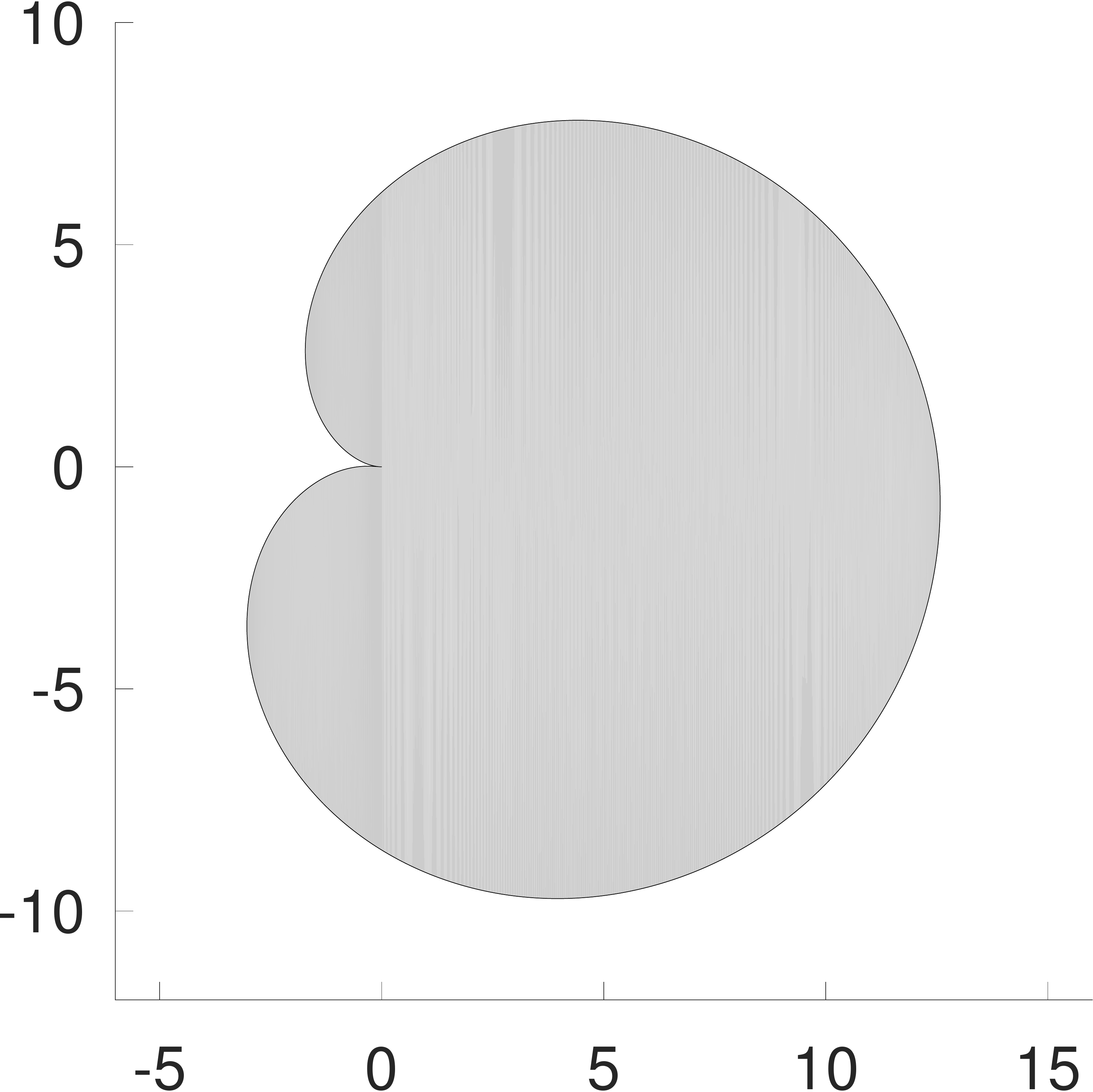}
		\par\end{centering}
}
\subfloat[Near the cusp]{
 \begin{centering}
		\includegraphics[scale=0.15]{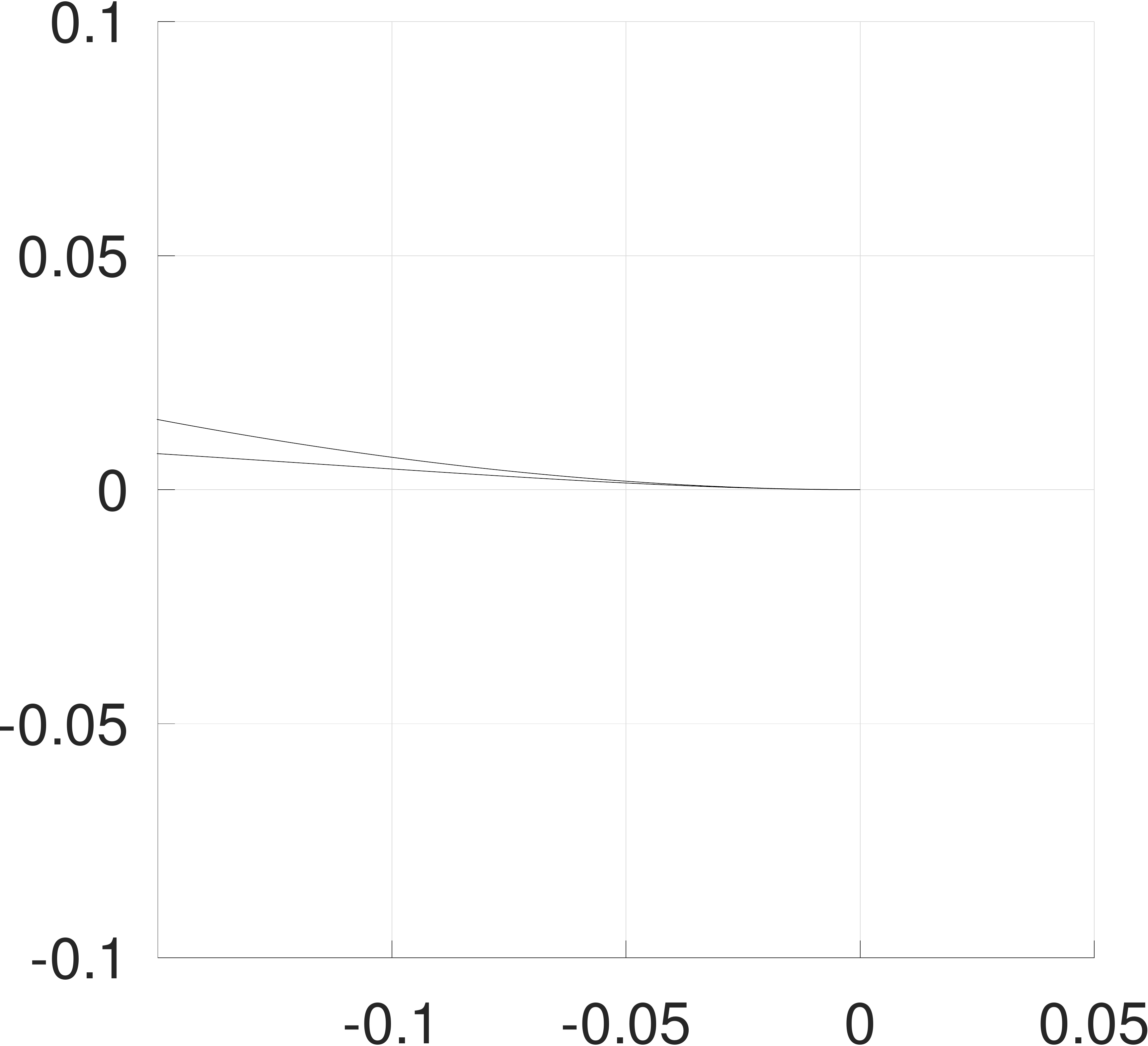}
		\par\end{centering}
}
\caption{\label{fig:curve-cusps} Plot of Example~\ref{exa:curve-cusps} (GNU Octave)}
\end{figure}

\section{\label{sec:Partial-balayage2}Partial balayage via an obstacle problem }

In this section we define the partial balayage measure $\Bal(\mu)$ with respect to $\Delta + k^2$ for certain sufficiently concentrated measures $\mu$ when $k > 0$ is small. For simplicity we will assume that $\mu$ is concentrated near the origin, but by translation invariance any other point would do. There are several ways of defining partial balayage, and we will proceed via an obstacle problem (see e.g.\ \cite[Definition 3.2]{Gus04LecturesBalayage} for $k=0$). 

Let $R(n,k)= \frac{1}{2}j_{\frac{n-2}{2},1} k^{-1}$ and let $\tilde{\Phi}_{k} = \tilde{\Phi}_{k,R(n,k)}$ be the fundamental solution given in Proposition~\ref{prop:GreenFunction}. Here and in what follows we write $U_{k}^{\mu} := \tilde{\Phi}_{k} \ast \mu$ for any Radon measure $\mu$. In the special case where $\mu = \chi_{\Omega}\mathsf{m}$ for some open set $\Omega$ we simply write $U_{k}^{\Omega} := \tilde{\Phi}_{k} \ast \chi_{\Omega}$. 
\labeltext{$R(n,k) = c_{n}^{\rm ref} k^{-1}$ maximal length scale}{index:ConstantRnk} 
\labeltext{$\tilde{\Phi}_{k} = \tilde{\Phi}_{k,R(n,k)}$ a particular fundamental solution of the Helmholtz operator $-(\Delta+k^2)$}{index:RealFundamentalSolution}
\labeltext{$U_{k}^{\mu} = \tilde{\Phi}_{k} \ast \mu$ potential of a measure $\mu$}{index:potential-Uk}

We now restrict ourselves to measures $\mu$ having a bounded density with respect to Lebesgue measure. Slightly abusing notation we write $\mu\in L^{\infty}(\Omega)$  to mean that $\mu$ has the form
\[
\mu = f \mathsf{m},
\]
where $f \in L^{\infty}(\mR^n)$ satisfies $f = 0$ outside $\Omega$. Under this assumption, by elliptic regularity (see e.g.\ \cite[Theorem~9.11]{GT01Elliptic}) 
\[
U_{k}^{\mu}\in\bigcap_{1<p<\infty}W_{{\rm loc}}^{2,p}(\mathbb{R}^{n})\subset\bigcap_{0<\alpha<1} C^{1,\alpha}(\overline{B_{R(n,k)}}).
\]
For $k>0$ and $\mu \in L^\infty(B_{R(n, k)})$ define \labeltext{$\mathscr{F}_{k}(\mu)$ the class of admissible functions in an obstacle problem}{index:FamilyFunctionsMu} 
\begin{equation}\label{eq: scrF}
\mathscr{F}_{k}(\mu)=\begin{Bmatrix}\begin{array}{l|l}
v\in H^{1}(B_{R(n,k)}) & \begin{array}{l}
(\Delta+k^{2})v \ge -1 \text{ in }B_{R(n,k)}\\
v\le U_{k}^{\mu}\text{ in }B_{R(n,k)}\\
v=U_{k}^{\mu}\text{ on }\partial B_{R(n,k)}
\end{array}\end{array}\end{Bmatrix}.
\end{equation}

\begin{lem}
\label{lem:non-empty-patch} 
Let $0 < \gamma < R(n,k)$ and $\mu\in L^{\infty}(B_\gamma)$.
Assume that there exists $r>0$ such that
\begin{equation}
r< R(n,k) - \gamma \quad \text{and} \quad c_{n,k,r}^{\rm MVT} \ge \mu_{+}(\mathbb{R}^{n}), \label{eq:non-empty-assumption-patch}
\end{equation}
where $c_{n,k,r}^{{\rm MVT}}$ is the constant appearing in the mean value theorem for the Helmholtz equation {\rm (}see~\eqref{eq:MVT-Helmholtz}{\rm )}. Then $\mathscr{F}_{k}(\mu)$ contains an element $\tilde{u}_{k}$ which equals $U_{k}^{\mu}$ in $B_{R(n,k)} \setminus B_{\gamma + r}$ {\rm (}note that $\gamma + r < R(n,k)${\rm )}. 
\end{lem}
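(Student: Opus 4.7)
The plan is to construct $\tilde u_k$ explicitly as the Helmholtz potential $U_k^\nu$ of a signed measure $\nu$ obtained by ``sweeping'' the positive part $\mu_+$ into a spread-out, bounded density via the Helmholtz mean value theorem.

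The key identity driving the construction is the following. For each $a \in B_\gamma$, applying the MVT~\eqref{eq:MVT-Helmholtz} to $y \mapsto \tilde{\Phi}_k(x-y)$, which is metaharmonic on $B_r(a)$ whenever $x \notin B_r(a)$, gives
\[
U_k^{B_r(a)}(x) = c_{n,k,r}^{\rm MVT}\tilde{\Phi}_k(x-a), \qquad x \in \mR^n \setminus B_r(a).
\]
A companion super-mean-value inequality for $\tilde{\Phi}_k$ — a consequence of $-(\Delta+k^2)\tilde{\Phi}_k = \delta_0 \ge 0$ together with the specific normalization of the fundamental solution fixed in Proposition~\ref{prop:GreenFunction} — extends this to $U_k^{B_r(a)}(x) \le c_{n,k,r}^{\rm MVT}\tilde{\Phi}_k(x-a)$ for all $x \in \mR^n$. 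In potential-theoretic language, these two relations say that $\tfrac{1}{c_{n,k,r}^{\rm MVT}}\chi_{B_r(a)}\dm$ and $\delta_a$ produce the same Helmholtz potential outside $B_r(a)$ and that the former is dominated by the latter everywhere.

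I would then set $\nu_0 := \tfrac{1}{c_{n,k,r}^{\rm MVT}}(\mu_+ \ast \chi_{B_r})\dm$, so that $\nu_0 \ge 0$ is supported in $B_{\gamma+r}$ and its density at $y$ equals $\tfrac{\mu_+(B_r(y))}{c_{n,k,r}^{\rm MVT}}$, bounded above by $1$ thanks to the hypothesis $c_{n,k,r}^{\rm MVT} \ge \mu_+(\mR^n)$. Put $\nu := \nu_0 - \mu_-$ and $\tilde u_k := U_k^\nu$. Because $\nu \in L^\infty(\mR^n)$ has compact support, elliptic regularity places $\tilde u_k$ in $\bigcap_{p<\infty}W^{2,p}_{\rm loc}(\mR^n)$, and in particular in $H^1(B_{R(n,k)})$. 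Since $\nu_0 \le 1$ and $\mu_- \ge 0$,
\[
(\Delta+k^2)\tilde u_k = -\nu = -\nu_0 + \mu_- \ge -1.
\]
Fubini's theorem next gives
\[
\tilde u_k(x) - U_k^\mu(x) = U_k^{\nu_0 - \mu_+}(x) = \int_{\mR^n} \Bigl(\tfrac{1}{c_{n,k,r}^{\rm MVT}}U_k^{B_r(a)}(x) - \tilde{\Phi}_k(x-a)\Bigr) d\mu_+(a),
\]
so the super-mean-value inequality forces the integrand to be non-positive and yields $\tilde u_k \le U_k^\mu$ on $B_{R(n,k)}$. When $x \in B_{R(n,k)} \setminus B_{\gamma+r}$ every $a \in B_\gamma$ satisfies $|x-a| > r$; the MVT identity then collapses the integrand to zero, so $\tilde u_k \equiv U_k^\mu$ there, and in particular on $\partial B_{R(n,k)}$ since $\gamma + r < R(n,k)$. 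Collecting these properties places $\tilde u_k$ in $\mathscr{F}_k(\mu)$ with the additional property claimed.

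The hard part will be the super-mean-value inequality for $\tilde{\Phi}_k$ at points inside $B_r$: outside the ball the MVT supplies equality, but inside one must control the singularity of $\tilde{\Phi}_k$ at the origin by invoking that $\tilde{\Phi}_k$ is $-(\Delta+k^2)$-superharmonic and that the scale bound $r < R(n,k)$ keeps the MVT constant positive. This is precisely where the specific normalization of $\tilde{\Phi}_k$ built into Proposition~\ref{prop:GreenFunction} and the threshold $R(n,k) = \tfrac{1}{2}j_{(n-2)/2,1}k^{-1}$ come into play.
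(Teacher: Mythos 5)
Your construction is exactly the one the paper uses: $\tilde u_k = U_k^{\mu_+*h_r}-U_k^{\mu_-}$ with $h_r=\tfrac{1}{c_{n,k,r}^{\rm MVT}}\chi_{B_r}$, with the inequality $\tilde u_k\le U_k^\mu$ (equality off $B_{\gamma+r}$) coming from the mean value theorem applied to the super-solution $y\mapsto\tilde\Phi_k(x-y)$, and the bound $(\Delta+k^2)\tilde u_k\ge -1$ from $\mu_+(\mathbb{R}^n)\le c_{n,k,r}^{\rm MVT}$. The step you flag as the ``hard part'' is precisely the sub/super-solution mean value inequality already supplied by Proposition~\ref{prop:MVT-metaharmonic} (valid since $r<R(n,k)<j_{\frac{n-2}{2},1}k^{-1}$), so the argument is correct and essentially identical to the paper's proof.
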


\begin{proof}
Let 
\begin{equation}
\tilde{u}_{k} := U_{k}^{\mu_{+}}*h_r-U_{k}^{\mu_{-}}\quad\text{where}\quad h_r :=\frac{1}{c_{n,k,r}^{{\rm MVT}}}\chi_{B_{r}}.\label{eq:construction-example}
\end{equation}
Using the mean value theorem in Proposition~\ref{prop:MVT-metaharmonic}, we have 
\[
U_{k}^{\mu_{+}*h_{r}}(x) \le U_{k}^{\mu_{+}}(x) \text{ for all } x \in \mathbb{R}^{n} \text{ with equality if } \mu_{+}(B_{r}(x))=0,
\]
which implies 
\[
\tilde{u}_{k} \le U_{k}^{\mu} \text{ in } \mathbb{R}^{n} \quad \text{and} \quad \tilde{u}_{k} = U_{k}^{\mu} \text{ in } \mathbb{R}^{n} \setminus B_{\gamma + r}.
\]
Finally we note that 
\[
(\Delta + k^{2})\tilde{u}_{k}(x) = (-\mu_{+}*h_{r} + \mu_{-})(x) \ge -\mu_{+}*h_{r}(x) = -\frac{\mu_{+}(B_{r}(x))}{c_{n,k,r}^{{\rm MVT}}} \ge - \frac{\mu_{+}(\mathbb{R}^{n})}{c_{n,k,r}^{{\rm MVT}}} \ge -1,
\]
which shows that $\tilde{u}_{k} \in \mathscr{F}_{k}(\mu)$.
\end{proof}

For fixed $\mu$ we now choose the parameter $r$ in Lemma~\ref{lem:non-empty-patch} in order to find an explicit range of $k>0$ for which the lemma applies.

By the definition of $c_{n,k,r}^{\rm MVT}$ the second inequality in \eqref{eq:non-empty-assumption-patch} is equivalent to 
\begin{equation}
k \le \frac{(2\pi kr)^{\frac{1}{2}} J_{\frac{n}{2}}(kr)^{\frac{1}{n}}}{\mu_{+}(\mathbb{R})^{\frac{1}{n}}}\,. \label{eq:non-empty-assumption-patch1}
\end{equation}
Since $t \mapsto t^{\frac{1}{2}}J_{\frac{n}{2}}(t)^{\frac{1}{n}}$ is strictly increasing on $[0, j_{\frac{n-2}{2},1}]$, we see that in order to maximize the range of $k$ we here want to choose $k r$ as large as possible.

By the definition of $R(n, k)$ we see that the range of $r$ we can consider is given by
\[
0< rk < \frac{1}{2}j_{\frac{n-2}2,1}-\gamma k\,.
\]
Therefore, if we assume that
\[
k \le \frac{j_{\frac{n-2}{2},1}}{4\gamma} 
\]
we can choose
\[
rk =\frac{\frac{1}{2} j_{\frac{n-2}{2},1} - \gamma k}{2} \ge \frac{j_{\frac{n-2}{2},1}}{8}.
\]
By the monotonicity of $t \mapsto t^{\frac{1}{2}}J_{\frac{n}{2}}(t)^{\frac{1}{n}}$ we then know that \eqref{eq:non-empty-assumption-patch1} is satisfied for all
\[
k \le \frac{( \frac{\pi j_{\frac{n-2}{2},1}}{4})^{\frac{1}{2}} J_{\frac{n}{2}}(\frac{j_{\frac{n-2}{2},1}}{8})^{\frac{1}{n}}}{\mu_{+}(\mathbb{R})^{\frac{1}{n}}}\,.
\]
Consequently, for any 
\[
c_{n} \leq \min \Bigl\{ \bigg( \frac{\pi j_{\frac{n-2}{2},1}}{4} \bigg)^{\frac{1}{2}} J_{\frac{n}{2}} \bigg( \frac{j_{\frac{n-2}{2},1}}{8} \bigg)^{\frac{1}{n}} , \frac{j_{\frac{n-2}{2},1}}{4} \Bigr\}
\]
we conclude the following lemma:

\begin{lem}
\label{lem:non-empty} 
Fix any $\gamma > 0$ and $\mu\in L^{\infty}(B_{\gamma})$.
There exists a positive constant $c_{n}$ {\rm (}depending only on the dimension $n${\rm )} such that if
\begin{equation}\label{eq:k bound existence lemma}
0 < k \le  c_n \min  \{\gamma^{-1},\mu_{+}(\mathbb{R}^{n})^{-\frac{1}{n}}\}
\end{equation}
then $\mathscr{F}_{k}(\mu)$ contains an element $\tilde{u}_{k}$ with 
\begin{equation}
\tilde{u}_{k}=U_{k}^{\mu} \quad \text{near } \mathbb{R}^{n} \setminus B_{R(n,k)}.\label{eq:near-boundary}
\end{equation}
\end{lem}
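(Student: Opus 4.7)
The plan is to apply Lemma~\ref{lem:non-empty-patch} with an efficient choice of the auxiliary radius $r$. Writing $j_* := j_{\frac{n-2}{2},1}$, so that $R(n,k) = \frac{1}{2}j_* k^{-1}$, the two requirements on $r$ in \eqref{eq:non-empty-assumption-patch} are: (i) $r + \gamma < R(n,k)$; and (ii) $c_{n,k,r}^{\rm MVT} \ge \mu_+(\mathbb{R}^n)$. My task is to identify a single dimensional constant $c_n$ for which the assumed range on $k$ guarantees that some admissible $r$ exists.

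First I would unpack (ii) using the explicit expression of $c_{n,k,r}^{\rm MVT}$ in terms of the Bessel function $J_{n/2}$; this rewrites (ii) in the equivalent form \eqref{eq:non-empty-assumption-patch1},
\[
k\,\mu_+(\mathbb{R}^n)^{\frac{1}{n}} \;\le\; (2\pi kr)^{\frac{1}{2}} J_{\frac{n}{2}}(kr)^{\frac{1}{n}}.
\]
The right-hand side depends only on the product $kr$, and the map $t \mapsto t^{1/2} J_{n/2}(t)^{1/n}$ is strictly increasing on $[0, j_*]$ (which follows from the Bessel identity $(t^{n/2}J_{n/2}(t))' = t^{n/2}J_{(n-2)/2}(t)$ together with the positivity of $J_{n/2}$ and $J_{(n-2)/2}$ on $(0, j_*)$, since $j_{n/2,1} > j_*$). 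Hence $kr$ should be pushed as high as feasible within this monotonicity interval.

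Next I would use (i), which reads $kr < \frac{1}{2}j_* - \gamma k$, to govern the admissible range of $kr$. Enforcing the preliminary smallness bound $k \le j_*/(4\gamma)$ gives $\gamma k \le j_*/4$, so that I may choose $kr = \frac{1}{2}(\frac{1}{2}j_* - \gamma k)$, a quantity lying in $[j_*/8,\, j_*/4]$. Substituting this choice into the displayed inequality and bounding its right-hand side from below by its value at $kr = j_*/8$ (monotonicity), condition (ii) is reduced to
\[
k \;\le\; \Bigl( \frac{\pi j_*}{4} \Bigr)^{\frac{1}{2}} J_{\frac{n}{2}}\!\Bigl( \frac{j_*}{8} \Bigr)^{\frac{1}{n}} \mu_+(\mathbb{R}^n)^{-\frac{1}{n}}.
\]
Taking $c_n$ to be the minimum of $j_*/4$ and $(\pi j_*/4)^{1/2} J_{n/2}(j_*/8)^{1/n}$, the hypothesis \eqref{eq:k bound existence lemma} implies both (i) and (ii). Lemma~\ref{lem:non-empty-patch} then produces $\tilde{u}_k \in \mathscr{F}_k(\mu)$ with $\tilde{u}_k = U_k^\mu$ outside $B_{\gamma+r}$; since $\gamma + r < R(n,k)$ by construction, this set is a neighborhood of $\mathbb{R}^n \setminus B_{R(n,k)}$, yielding \eqref{eq:near-boundary}.

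Once Lemma~\ref{lem:non-empty-patch} is available, the argument is essentially a bookkeeping exercise. The only genuinely analytic input I anticipate needing is the strict monotonicity of $t \mapsto t^{1/2}J_{n/2}(t)^{1/n}$ on $[0, j_*]$, which amounts to a short Bessel computation rather than the main obstacle.
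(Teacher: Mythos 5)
Your proposal is correct and follows essentially the same route as the paper: it applies Lemma~\ref{lem:non-empty-patch} with the choice $kr = \tfrac12\bigl(\tfrac12 j_{\frac{n-2}{2},1} - \gamma k\bigr)$ under the preliminary bound $k \le j_{\frac{n-2}{2},1}/(4\gamma)$, uses the monotonicity of $t \mapsto t^{1/2}J_{n/2}(t)^{1/n}$ on $[0, j_{\frac{n-2}{2},1}]$ to reduce to the bound at $kr = j_{\frac{n-2}{2},1}/8$, and takes $c_n$ as the same minimum of the two resulting constants. Your Bessel-identity justification of the monotonicity is a correct fleshing out of a step the paper only asserts.
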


The following proposition will be used to define partial balayage in terms of the solution of our obstacle problem.
\begin{prop}\label{prop:partial-balayage-obstacle} 
Let $\mu$ and $k>0$ be as in Lemma~{\rm \ref{lem:non-empty}}.
Then there exists a largest element $V_{k}^{\mu}$ in $\mathscr{F}_{k}(\mu)$. In addition, the element $V_{k}^{\mu}$ satisfies 
\begin{equation}
\langle 1 + (\Delta + k^{2})V_{k}^{\mu} , V_{k}^{\mu} - U_{k}^{\mu} \rangle = 0,\label{eq:balayage-characterization2}
\end{equation}
where $\langle \cdot ,\cdot \rangle$ is the $H^{-1}(B_{R(n,k)}) \times H_{0}^{1}(B_{R(n,k)})$ duality pairing. 
\end{prop}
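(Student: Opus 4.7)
My approach is to reduce the problem to a classical single-obstacle variational problem.

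Set $R := R(n,k) = \frac{1}{2} j_{\frac{n-2}{2},1} k^{-1}$. The first Dirichlet eigenvalue of $-\Delta$ on $B_R$ equals $(j_{\frac{n-2}{2},1}/R)^2 = 4k^2$, so by the Poincar\'e inequality the bilinear form
$a(u,\varphi) := \int_{B_R} (\nabla u \cdot \nabla \varphi - k^2 u \varphi) \, dx$
is coercive on $H_0^1(B_R)$. I would then substitute $u := U_k^\mu - v$; since $(\Delta + k^2) U_k^\mu = -\mu$, this gives a bijection between $\mathscr{F}_k(\mu)$ and
$\mathcal{K} := \{u \in H_0^1(B_R) : u \geq 0 \text{ a.e.\ and } -(\Delta+k^2) u \geq \mu - 1 \text{ in } \mathscr{D}'(B_R)\}$,
and producing the largest element of $\mathscr{F}_k(\mu)$ is equivalent to producing the smallest element $u^*$ of $\mathcal{K}$.

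The candidate $u^*$ will be constructed variationally: minimize $J(u) := \frac{1}{2} a(u, u) - \langle \mu - 1, u \rangle$ over the closed convex set $\mathcal{K}_0 := \{u \in H_0^1(B_R) : u \geq 0 \text{ a.e.}\}$. This set is non-empty (it contains $0$), and $J$ is strictly convex and coercive by coercivity of $a$, so classical Kinderlehrer--Stampacchia theory yields a unique minimizer $u^* \in \mathcal{K}_0$ satisfying the variational inequality $a(u^*, \varphi - u^*) \geq \langle \mu - 1, \varphi - u^* \rangle$ for all $\varphi \in \mathcal{K}_0$. Testing with $\varphi = u^* + \psi$ for an arbitrary non-negative $\psi \in H_0^1(B_R)$ yields $-(\Delta+k^2) u^* \geq \mu - 1$, so $u^* \in \mathcal{K}$; testing successively with $\varphi = 0$ and $\varphi = 2u^*$ yields the complementarity identity $a(u^*, u^*) = \langle \mu - 1, u^* \rangle$. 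Setting $V_k^\mu := U_k^\mu - u^*$ and using $1 + (\Delta+k^2)V_k^\mu = 1 - \mu - (\Delta+k^2) u^*$ together with $V_k^\mu - U_k^\mu = -u^*$, this identity translates directly into \eqref{eq:balayage-characterization2}.

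Finally, maximality of $V_k^\mu$ in $\mathscr{F}_k(\mu)$ follows from the standard obstacle-problem comparison. Given any $\tilde v \in \mathscr{F}_k(\mu)$ with associated $\tilde u := U_k^\mu - \tilde v \in \mathcal{K}$, I would insert $\varphi = \min(u^*, \tilde u) \in \mathcal{K}_0$ into the variational inequality for $u^*$, which gives $a(u^*, (u^* - \tilde u)_+) \leq \langle \mu - 1, (u^* - \tilde u)_+ \rangle$, and then pair the distributional inequality $-(\Delta+k^2) \tilde u \geq \mu - 1$ against $(u^* - \tilde u)_+ \in H_0^1(B_R)$ to obtain $a(\tilde u, (u^* - \tilde u)_+) \geq \langle \mu - 1, (u^* - \tilde u)_+ \rangle$. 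Subtracting and using $\nabla(u^* - \tilde u) \cdot \nabla (u^* - \tilde u)_+ = |\nabla (u^* - \tilde u)_+|^2$ forces $a((u^* - \tilde u)_+, (u^* - \tilde u)_+) \leq 0$, and coercivity yields $(u^* - \tilde u)_+ = 0$, i.e.\ $V_k^\mu \geq \tilde v$. The main delicate step is verifying that the test functions $\min(u^*, \tilde u)$ and $(u^* - \tilde u)_+$ lie in $H_0^1(B_R)$ (Stampacchia's lemma applies since both $u^*$ and $\tilde u$ have zero trace) and that the distributional supersolution inequality pairs correctly with non-negative $H_0^1$ functions; this is routine given $\mu \in L^\infty$ and elliptic regularity.
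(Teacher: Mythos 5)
Your proof is correct and follows essentially the same route as the paper: both reduce the problem to a coercive variational inequality (obstacle problem) on $H_0^1(B_{R(n,k)})$ via Stampacchia's theorem, extract the supersolution property and the complementarity identity from suitable test functions, and conclude maximality by a comparison argument. The only differences are cosmetic: you use the translate $u = U_k^\mu - v$ (zero obstacle, linear source term $\mu-1$) where the paper uses $w = \varphi - v$ with an auxiliary solution of $(\Delta+k^2)\varphi=-1$ (obstacle $\varphi-U_k^\mu$, pure quadratic functional), and you write out the $(u^*-\tilde u)_+$ comparison step explicitly where the paper cites Kinderlehrer--Stampacchia, Theorem~II.6.4.
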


\begin{rem} \label{rem:identical-near-boundary}
Note that Lemma~\ref{lem:non-empty} implies that there exists $\tilde{u}_{k}\in \mathscr{F}_{k}(\mu)$ satisfying  $\tilde u_k = U_k^\mu$ near $\partial B_{R(n,k)}$. Therefore, if $V_{k}^{\mu}$ is that largest element in $\mathscr{F}_{k}(\mu)$ then
\begin{equation}
V_{k}^{\mu}=U_{k}^{\mu}\quad\text{near }\partial B_{R(n,k)}. \label{eq:identical-near-boundary-rem}
\end{equation}
Therefore, we can extend $V_{k}^{\mu}$ to the whole $\mathbb{R}^{n}$, by defining $V_{k}^{\mu} := U_{k}^{\mu}$ outside $B_{R(n,k)}$.
\end{rem}

The proof of the proposition is based on variational arguments. In particular, we shall need the following elementary lemma several times in the proof.
\begin{lem}\label{lem: properties of bilinear form}
Fix $k> 0$ and $0<R<j_{\frac{n-2}{2},1}k^{-1}$. Let $a \colon H_0^1(B_R)\times H_0^1(B_R)\to \mathbb{R}$ be the symmetric bilinear form defined by 
\begin{equation}
a(u_{1},u_{2}) :=\int_{B_R}(\nabla u_{1}\cdot\nabla u_{2}-k^{2}u_{1}u_{2})\,d\mathsf{m}. \label{eq:bilinear-form-Appendix}
\end{equation}
Then $a$ is continuous, positive, and coercive.
\end{lem}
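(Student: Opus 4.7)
The plan is to verify the three assertions in order, with continuity and positivity being routine and coercivity following from a quantitative form of the positivity estimate combined with Poincar\'e's inequality.

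First, continuity. By the Cauchy--Schwarz inequality applied to each of the two terms in \eqref{eq:bilinear-form-Appendix} one has
\[
\abs{a(u_1, u_2)} \leq \norm{\nabla u_1}_{L^2(B_R)} \norm{\nabla u_2}_{L^2(B_R)} + k^2 \norm{u_1}_{L^2(B_R)} \norm{u_2}_{L^2(B_R)} \leq (1+k^2) \norm{u_1}_{H^1_0(B_R)} \norm{u_2}_{H^1_0(B_R)},
\]
which gives continuity on $H^1_0(B_R) \times H^1_0(B_R)$.

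Next, positivity and coercivity. The key input is the sharp Dirichlet--Poincar\'e inequality on balls: the first Dirichlet eigenvalue of $-\Delta$ on $B_R$ equals $\lambda_1(B_R) = (j_{\frac{n-2}{2},1}/R)^{2}$, so that
\[
\int_{B_R} \abs{\nabla u}^2 \,d\dm \geq \Bigl( \frac{j_{\frac{n-2}{2},1}}{R} \Bigr)^{\!2} \int_{B_R} u^2 \,d\dm, \qquad u \in H^1_0(B_R).
\]
By the hypothesis $0 < R < j_{\frac{n-2}{2},1} k^{-1}$, the quantity $\eta := 1 - (kR/j_{\frac{n-2}{2},1})^2$ is strictly positive. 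Writing
\[
a(u,u) = \int_{B_R} \abs{\nabla u}^2 \,d\dm - k^2 \int_{B_R} u^2 \,d\dm
\]
and replacing the second term using the Poincar\'e inequality yields $a(u,u) \geq \eta \int_{B_R} \abs{\nabla u}^2 \,d\dm \geq 0$, which is positivity. Applying Poincar\'e once more in the form $\norm{u}_{L^2(B_R)}^2 \leq (R/j_{\frac{n-2}{2},1})^2 \norm{\nabla u}_{L^2(B_R)}^2$ gives $\norm{u}_{H^1_0(B_R)}^2 \leq (1 + (R/j_{\frac{n-2}{2},1})^2) \norm{\nabla u}_{L^2(B_R)}^2$, so that
\[
a(u,u) \geq \eta \norm{\nabla u}_{L^2(B_R)}^2 \geq \frac{\eta}{1+(R/j_{\frac{n-2}{2},1})^2} \norm{u}_{H^1_0(B_R)}^2,
\]
which is the desired coercivity with an explicit positive constant.

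The only nontrivial ingredient is the identification of $\lambda_1(B_R)$ with $(j_{\frac{n-2}{2},1}/R)^2$, which is standard (the first Dirichlet eigenfunction is radial and proportional to $\abs{x}^{1-n/2} J_{n/2-1}(j_{\frac{n-2}{2},1} \abs{x}/R)$); no obstacle is expected here.
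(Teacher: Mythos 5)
Your proof is correct and follows essentially the same route as the paper: both identify the first Dirichlet eigenvalue of $B_R$ as $j_{\frac{n-2}{2},1}^2 R^{-2}$, observe that $k^2$ lies strictly below it, and deduce $a(u,u)\geq (1-k^2R^2 j_{\frac{n-2}{2},1}^{-2})\,\|\nabla u\|_{L^2(B_R)}^2$ via the Poincar\'e inequality. You simply spell out the two easy points the paper leaves implicit, namely the Cauchy--Schwarz bound for continuity and the final application of Poincar\'e converting the gradient bound into coercivity for the full $H^1_0$ norm.
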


\begin{proof}
That $a$ is a continuous is clear from the definition. To prove that the form is coercive and positive we observe that by assumption $k^2$ is strictly smaller than the first eigenvalue of Dirichlet Laplacian on $B_R$ (which is exactly $j_{\frac{n-2}{2},1}^2R^{-2}$). Therefore,
\begin{equation*}
 a(u, u) = \int_{B_R}(|\nabla u|^2 -k^{2}|u|^2)\,d\mathsf{m}\geq (1- k^2R^2j_{\frac{n-2}{2},1}^{-2})\int_{B_R}|\nabla u|^2\,d\mathsf{m}.
\end{equation*}
This concludes the proof.
\end{proof}

\medskip

\begin{proof}
[Proof of Proposition~{\rm \ref{prop:partial-balayage-obstacle}}]
Let $\varphi\in H^{1}(B_{R(n,k)})$ be the unique solution to 
\begin{equation}
\begin{cases}
(\Delta+k^{2})\varphi = -1 & \text{in}\;\;B_{R(n,k)},\\
\varphi=U_{k}^{\mu} & \text{on}\;\;\partial B_{R(n,k)},
\end{cases} \label{eq:varphi-auxiliary}
\end{equation}
and define 
\begin{align*}
\tilde{\mathscr{F}}_{k}(\mu) & =\begin{Bmatrix}\begin{array}{l|l}
w=\varphi-v & v\in\mathscr{F}_{k}(\mu)\end{array}\end{Bmatrix}
 =\begin{Bmatrix}\begin{array}{l|l}
w\in H_{0}^{1}(B_{R(n,k)}) & \begin{array}{l}
(\Delta + k^{2}) w \le 0\text{ in }B_{R(n,k)}\\
w\ge\varphi -U_{k}^{\mu}\text{ in }B_{R(n,k)}
\end{array}\end{array}\end{Bmatrix}.
\end{align*} 
We claim that there exists a smallest element $u_{*}$ of $\tilde{\mathscr{F}}_{k}(\mu)$. If this is the case then
\begin{equation}
V_{k}^{\mu} := \varphi-u_{*}\quad\text{in}\;\;B_{R(n,k)} \label{eq:u-star-minimizer}
\end{equation}
is the largest element of $\mathscr{F}_{k}(\mu)$. 

To see that there exists a smallest element in $\tilde{\mathscr{F}}_{k}(\mu)$ we argue as follows.
Let $a$ be the bilinear form defined in~\eqref{eq:bilinear-form-Appendix} with $R = R(n,k)$.
By Lemma~\ref{lem: properties of bilinear form} $a$ is symmetric, continuous, and coercive. Define the constraint set 
\begin{equation}\label{eq:constraint set step 2 obstacle problem}
\tilde{\mathcal{K}}_{k}:=\begin{Bmatrix}\begin{array}{l|l}
u\in H_{0}^{1}(B_{R(n,k)}) & u\ge\varphi-U_{k}^{\mu}\end{array}\end{Bmatrix}.
\end{equation}

Note that $\varphi-U_k^\mu \in H_0^1(B_{R(n, k)})$ by definition of $\varphi$, thus $\tilde{\mathcal{K}}_k$ is nonempty. 
Since $\tilde{\mathcal{K}}_k$ is a nonempty closed convex subset of $H_{0}^{1}(B_{R(n,k)})$, Stampacchia's theorem~\cite[Theorem~5.6]{Bre11PDE} implies that there exists a unique $u_{*} \in \tilde{\mathcal{K}}_k$ that minimizes the functional
\begin{equation}
u \mapsto a(u, u) \label{eq:u-star-minimizer-general}
\end{equation}
and $u_{*}\in\tilde{\mathcal{K}}_k$ can also be characterized by 
\begin{equation}
a(u_{*},u-u_{*}) 
\ge 0 \quad \text{for all } u \in \tilde{\mathcal{K}}_{k}. \label{eq:minimizer2}
\end{equation}
Plugging in $u = u_{*} + \phi$ with non-negative $\phi \in C_{c}^{\infty}(B_{R(n,k)})$ into \eqref{eq:minimizer2},
the definition of $a$ in~\eqref{eq:bilinear-form-Appendix} implies that 
\begin{equation}
(\Delta + k^{2})u_{*} \le 0 \text{ in $B_{R(n, k)}$}.
\label{eq:complementarity1}
\end{equation}
In particular, we conclude that $u_{*} \in \tilde{\mathscr{F}}_{k}(\mu)$. Finally, by arguing as in the proof of \cite[Theorem~II.6.4]{KS00IntroductionVariationalInequalities}, one can prove that $u_{*} \le v$ in $B_{R(n, k)}$ for all $v \in \tilde{\mathscr{F}}_{k}(\mu)$. Consequently, we have found the desired smallest element in $\tilde{\mathscr{F}}_{k}(\mu)$.

Choosing $u=\varphi-U_{k}^{\mu}\in H_{0}^{1}(B_{R(n,k)})$ in \eqref{eq:minimizer2}, we have 
\begin{equation}
\langle (\Delta + k^{2})u_{*}, \varphi - U_{k}^{\mu} - u_{*} \rangle \le 0.\label{eq:conclusion-maximizer1-1}
\end{equation} 
Since $u_{*}\in\tilde{\mathcal{K}}_{k}$ we know that $u_{*}\ge\varphi-U_{k}^{\mu}$. Along with \eqref{eq:complementarity1} and \eqref{eq:conclusion-maximizer1-1} this inequality implies 
\begin{equation}
\langle (\Delta + k^{2})u_{*}, \varphi - U_{k}^{\mu} - u_{*} \rangle = 0.\label{eq:conclusion-maximizer1}
\end{equation}
Combining \eqref{eq:conclusion-maximizer1} with \eqref{eq:u-star-minimizer}, as well as \eqref{eq:varphi-auxiliary}, we obtain 
\begin{align*}
0 = \langle (\Delta + k^{2})(\varphi - V_{k}^{\mu}), V_{k}^{\mu} - U_{k}^{\mu} \rangle 
 = - \langle 1 + (\Delta + k^{2})V_{k}^{\mu} , V_{k}^{\mu} - U_{k}^{\mu} \rangle,
\end{align*}
which shows that $V_{k}^{\mu}$ satisfies \eqref{eq:balayage-characterization2}.
\end{proof}

We are now ready to define partial balayage for the Helmholtz operator.
\begin{defn} \label{def:obstacle-balayage}
Let $\mu$ and $k>0$ be as in Lemma~{\rm \ref{lem:non-empty}}. The \emph{partial balayage} of $\mu$ is defined by 
\begin{equation}
\Bal (\mu) := -(\Delta + k^{2})V_{k}^{\mu} \quad \text{in distribution sense}, \label{eq:balayage-definition-equivalent}
\end{equation}
where $V_{k}^{\mu}$ is given by Proposition~\ref{prop:partial-balayage-obstacle}. 
\end{defn}

We have the following basic properties of the partial balayage measure and the corresponding potential.

\begin{lem} \label{lem:basic-properties}
\begin{subequations}
Let $\mu$ and $k>0$ be as in Lemma~{\rm \ref{lem:non-empty}}. Then \begin{align}
\Bal(\mu) \leq 1 \quad \text{in }\mathbb{R}^{n}, \label{eq:basic1}\\
U_k^{\Bal(\mu)} \equiv V_{k}^{\mu} \quad \text{in }\mathbb{R}^{n}.
\label{eq_uknu_vkmu}
\end{align}
We also have 
\begin{equation}
U_{k}^{\Bal(\mu)} \le U_{k}^{\mu} \quad \text{in }\mathbb{R}^{n} \label{eq:basic2}
\end{equation}
and 
\begin{equation}
U_k^{\Bal(\mu)} = U_k^{\mu} \quad \text{in a neighborhood of }\mathbb{R}^{n}\setminus B_{R(n,k)}. \label{eq:basic3}
\end{equation}
\end{subequations}
\end{lem}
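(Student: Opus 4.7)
The plan is to prove Lemma~\ref{lem:basic-properties} by first establishing \eqref{eq:basic1} directly from the obstacle-problem definition of $V_k^\mu$, then proving the key identity \eqref{eq_uknu_vkmu} via a duality computation, and finally deducing \eqref{eq:basic2} and \eqref{eq:basic3} as corollaries. For \eqref{eq:basic1}: inside $B_{R(n,k)}$ the condition $V_k^\mu\in\mathscr{F}_k(\mu)$ gives $(\Delta+k^2)V_k^\mu\ge -1$ by~\eqref{eq: scrF}, so $\Bal(\mu)\le 1$; outside $B_{R(n,k)}$ the extension described in Remark~\ref{rem:identical-near-boundary} yields $V_k^\mu=U_k^\mu$, hence $(\Delta+k^2)V_k^\mu=-\mu$, which vanishes since $\supp\mu\subset B_\gamma\subset B_{R(n,k)}$. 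The same observation shows that $\Bal(\mu)$ is a compactly supported distribution on $\mathbb{R}^n$ with support in $\overline{B_{R(n,k)}}$, so the potential $U_k^{\Bal(\mu)}=\tilde\Phi_k*\Bal(\mu)$ is well defined.

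The heart of the proof is \eqref{eq_uknu_vkmu}. Both $V_k^\mu$ (extended as above) and $U_k^{\Bal(\mu)}$ satisfy $-(\Delta+k^2)(\,\cdot\,)=\Bal(\mu)$ on $\mathbb{R}^n$ in the distributional sense, so $w:=V_k^\mu-U_k^{\Bal(\mu)}$ is an entire solution of the Helmholtz equation. The natural way to show $w\equiv 0$ is to pair against test functions $\phi\in C_c^\infty(\mathbb{R}^n)$ and carry out the formal chain
\begin{equation*}
\langle \tilde\Phi_k * \Bal(\mu),\phi\rangle = \langle \Bal(\mu),\tilde\Phi_k*\phi\rangle = -\langle (\Delta+k^2)V_k^\mu,\tilde\Phi_k*\phi\rangle = -\langle V_k^\mu,(\Delta+k^2)(\tilde\Phi_k*\phi)\rangle = \langle V_k^\mu,\phi\rangle,
\end{equation*}
where the last equality uses the fundamental-solution identity $-(\Delta+k^2)(\tilde\Phi_k*\phi)=\phi$. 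Because $\tilde\Phi_k*\phi$ is smooth but not compactly supported, the integration by parts must be justified by cutting off on a large ball $B_M$ and controlling the resulting boundary term, exploiting that $V_k^\mu$ coincides with $\tilde\Phi_k*\mu$ outside $B_{R(n,k)}$ and that both $V_k^\mu$ and $\tilde\Phi_k*\phi$ then inherit the asymptotic decay of $\tilde\Phi_k$ supplied by Proposition~\ref{prop:GreenFunction}; equivalently, one may appeal to a Rellich-type uniqueness statement for entire Helmholtz solutions with the prescribed far-field behavior.

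Granted \eqref{eq_uknu_vkmu}, the remaining properties are immediate. For \eqref{eq:basic2}: the constraint $V_k^\mu\le U_k^\mu$ in $B_{R(n,k)}$ is part of the definition~\eqref{eq: scrF} and equality holds outside by extension, so $V_k^\mu\le U_k^\mu$ on $\mathbb{R}^n$, and \eqref{eq_uknu_vkmu} yields $U_k^{\Bal(\mu)}\le U_k^\mu$. For \eqref{eq:basic3}: Remark~\ref{rem:identical-near-boundary} combined with Lemma~\ref{lem:non-empty} produces $V_k^\mu=U_k^\mu$ in a neighborhood of $\mathbb{R}^n\setminus B_{R(n,k)}$, and \eqref{eq_uknu_vkmu} upgrades this to $U_k^{\Bal(\mu)}=U_k^\mu$ there. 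The main obstacle is therefore the rigorous justification of the duality step in \eqref{eq_uknu_vkmu}: real-valued Helmholtz fundamental solutions typically decay only like $|x|^{-(n-1)/2}$, so the boundary integrals at large radii are borderline, and one must exploit the oscillatory/cancellation structure built into the particular fundamental solution $\tilde\Phi_k$ supplied by Proposition~\ref{prop:GreenFunction}.
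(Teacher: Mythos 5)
Your reduction of the lemma to the single identity \eqref{eq_uknu_vkmu}, and your derivations of \eqref{eq:basic1}, \eqref{eq:basic2}, \eqref{eq:basic3} from it via the definition of $\mathscr{F}_k(\mu)$ and Remark~\ref{rem:identical-near-boundary}, agree with the paper. The gap is in the only nontrivial step, the justification of \eqref{eq_uknu_vkmu}. Your duality chain needs the integration by parts $\langle (\Delta+k^2)V_k^\mu,\tilde\Phi_k*\phi\rangle=\langle V_k^\mu,(\Delta+k^2)(\tilde\Phi_k*\phi)\rangle$ over all of $\mathbb{R}^n$, and as you note the resulting boundary terms on $\partial B_M$ are only $O(1)$: both $V_k^\mu$ and $\tilde\Phi_k*\phi$ decay like $|x|^{-(n-1)/2}$, so nothing vanishes for size reasons. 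Your two proposed rescues do not close this. A ``Rellich-type uniqueness statement'' is not available here: $\tilde\Phi_k$ is a \emph{real} (standing-wave) fundamental solution, so neither $V_k^\mu$ nor $U_k^{\Bal(\mu)}$ satisfies a radiation condition, and entire Helmholtz solutions with exactly the borderline decay $|x|^{-(n-1)/2}$ exist (e.g.\ $|x|^{1-\frac n2}J_{\frac{n-2}{2}}(k|x|)$); Rellich's lemma requires $\int_{\partial B_M}|w|^2\,dS\to0$, which you cannot assert for $w=V_k^\mu-U_k^{\Bal(\mu)}$ a priori. The appeal to ``oscillatory/cancellation structure'' is only a gesture: the cancellation is real, but proving it requires, e.g., splitting $\tilde\Phi_k$ into the outgoing fundamental solution plus an entire radial solution, using the Sommerfeld condition for the outgoing--outgoing Wronskian and the symmetry of the entire kernel for the cross terms -- none of which is carried out.

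The paper's proof makes all of this unnecessary, and you should adopt its observation: by Remark~\ref{rem:identical-near-boundary} one extends $V_k^\mu:=U_k^\mu$ outside $B_{R(n,k)}$ and has $V_k^\mu=U_k^\mu$ near $\partial B_{R(n,k)}$, so the difference $u:=U_k^\mu-V_k^\mu$ is a \emph{compactly supported} distribution with $-(\Delta+k^2)u=\mu-\Bal(\mu)$. For compactly supported $u$ the convolution identities are unconditionally valid, and
\begin{equation*}
u=\delta_0*u=\bigl(-(\Delta+k^2)\tilde\Phi_k\bigr)*u=\tilde\Phi_k*\bigl(-(\Delta+k^2)u\bigr)=U_k^{\mu}-U_k^{\Bal(\mu)},
\end{equation*}
which gives $U_k^{\Bal(\mu)}=V_k^\mu$ in two lines, with no far-field analysis at all. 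In short: your architecture is the same as the paper's, but your proof of the key identity is incomplete as written; replacing the cutoff-at-infinity argument by the compact-support convolution argument (i.e.\ working with $U_k^\mu-V_k^\mu$ rather than with $V_k^\mu$ itself) repairs it.
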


\begin{proof} 
If we can show \eqref{eq_uknu_vkmu}, then \eqref{eq:basic1} and \eqref{eq:basic2} are immediate consequence of Proposition~\ref{prop:partial-balayage-obstacle} and the definition of $\mathscr{F}_{k}(\mu)$, while \eqref{eq:basic3} is an immediate consequence of Remark~\ref{rem:identical-near-boundary}. It remains to prove \eqref{eq_uknu_vkmu}. 
Write $u = U_{k}^{\mu}-V_{k}^{\mu}$, so $-(\Delta+k^2)u = \mu - \Bal(\mu)$. Note that $u$ has compact support by Remark~\ref{rem:identical-near-boundary}. Thus 
\[
u = \tilde{\Phi}_k * (-(\Delta+k^2)u) = \tilde{\Phi}_k * (\mu - \Bal(\mu)) = U_k^{\mu}-U_k^{\Bal(\mu)}.
\]
This proves that $U_{k}^{\Bal(\mu)}=V_{k}^{\mu}$.
\end{proof}

We also make the following observation which will be very useful in our construction of $k$-quadrature domains.
\begin{lem} \label{lem:equivalent-saturated-set1}
Let $\mu$ and $k>0$ be as in Lemma~{\rm \ref{lem:non-empty}}. If 
\begin{equation}
\Bal(\mu)=\chi_{D}\mathsf{m}\quad\text{for some open set }D\label{eq:structure-goal}
\end{equation}
then 
\begin{subequations}
\begin{align}
U_{k}^{D} & \le U_{k}^{\mu}\quad\text{in }\mathbb{R}^{n},\label{eq:characterization1a}\\
U_{k}^{D} & =U_{k}^{\mu}\quad\text{in }\mathbb{R}^{n}\setminus D.\label{eq:characterization1b}
\end{align}
\end{subequations}
\end{lem}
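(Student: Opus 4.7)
The plan is to assemble the conclusion from two pieces that are already at hand: the identification $U_k^{\Bal(\mu)}\equiv V_k^\mu$ from \eqref{eq_uknu_vkmu} and the complementarity identity \eqref{eq:balayage-characterization2}. Since the hypothesis is that $\Bal(\mu)=\chi_D\dm$, we have $U_k^D=U_k^{\Bal(\mu)}=V_k^\mu$ on $\mathbb{R}^n$. Inequality \eqref{eq:characterization1a} then follows at once from the defining property $V_k^\mu\le U_k^\mu$ in $\mathscr{F}_k(\mu)$ inside $B_{R(n,k)}$, combined with the fact that $V_k^\mu=U_k^\mu$ outside $B_{R(n,k)}$ (Remark~\ref{rem:identical-near-boundary}).

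For \eqref{eq:characterization1b}, I first substitute $\Bal(\mu)=\chi_D\dm$ into \eqref{eq:balayage-characterization2}, which yields
\[
\langle 1-\chi_D,\,V_k^\mu-U_k^\mu\rangle_{H^{-1}(B_{R(n,k)})\times H_0^1(B_{R(n,k)})}=0.
\]
Here $V_k^\mu-U_k^\mu\in H_0^1(B_{R(n,k)})$ because $V_k^\mu=U_k^\mu$ on $\partial B_{R(n,k)}$ by Remark~\ref{rem:identical-near-boundary}, and $1-\chi_D\in L^\infty(B_{R(n,k)})\subset H^{-1}(B_{R(n,k)})$, so the pairing reduces to the ordinary integral
\[
\int_{B_{R(n,k)}\setminus D}(V_k^\mu-U_k^\mu)\,dx=0.
\]
Since $V_k^\mu\le U_k^\mu$ pointwise by the definition of $\mathscr{F}_k(\mu)$, the integrand is nonpositive and therefore vanishes a.e.\ on $B_{R(n,k)}\setminus D$. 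Both $U_k^\mu$ and $V_k^\mu=U_k^D$ are continuous (in fact $C^{1,\alpha}$, since $\mu$ and $\chi_D$ are bounded), so equality actually holds on all of $B_{R(n,k)}\setminus D$.

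To finish, I combine this with \eqref{eq:basic3}, which gives $V_k^\mu=U_k^\mu$ near $\mathbb{R}^n\setminus B_{R(n,k)}$. Noting that $\Bal(\mu)=0$ off $B_{R(n,k)}$ (again by \eqref{eq:basic3}, as $-(\Delta+k^2)(V_k^\mu-U_k^\mu)=\mu-\Bal(\mu)$ and $\mu$ is supported in $B_\gamma\subset B_{R(n,k)}$) forces $D\subset B_{R(n,k)}$. Hence $\mathbb{R}^n\setminus D$ is covered by the two sets on which we have already established $V_k^\mu=U_k^\mu$, and \eqref{eq:characterization1b} follows.

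I do not expect a real obstacle here — every ingredient is supplied by the preceding results. The only technical care needed is justifying the passage from the $H^{-1}\times H_0^1$ pairing to an honest Lebesgue integral (which works because $1-\chi_D\in L^\infty$ and $V_k^\mu-U_k^\mu\in H_0^1\hookrightarrow L^2$), and promoting the resulting a.e.\ equality to a pointwise one by continuity of the potentials.
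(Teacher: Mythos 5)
Your proof is correct and follows essentially the same route as the paper: citing Lemma~\ref{lem:basic-properties} for \eqref{eq:characterization1a}, rewriting the complementarity identity \eqref{eq:balayage-characterization2} as $\int_{B_{R(n,k)}\setminus D}(U_k^\mu-U_k^D)\,dx=0$, invoking the sign condition to conclude equality on $B_{R(n,k)}\setminus D$, and patching in \eqref{eq:basic3} for the exterior. You spell out two technical points the paper leaves implicit (the promotion from a.e.\ to pointwise equality via continuity, and the observation that $D\subset B_{R(n,k)}$ so the two regions actually cover $\mathbb{R}^n\setminus D$), which is helpful but does not change the argument.
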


\begin{proof}
We already proved \eqref{eq:characterization1a} in Lemma~\ref{lem:basic-properties}. Set $\nu=\Bal(\mu)$ and rewrite \eqref{eq:balayage-characterization2} as 
\[
0 = \int_{B_{R(n,k)}} (U_{k}^{\mu} - U_{k}^{D})(1 - \chi_{D}) \,dx = \int_{B_{R(n,k)\setminus D}} (U_{k}^{\mu} - U_{k}^{D}) \,dx.
\]
Combining this equality with \eqref{eq:characterization1a} and \eqref{eq:basic3}, we conclude \eqref{eq:characterization1b}. 
\end{proof}

We end this section by quickly relating our definition of partial balayage through an obstacle problem to a formulation in terms of energy minimization. Such a formulation is classical in the setting of $k=0$ (see for instance~\cite{Gus04LecturesBalayage}).

\begin{rem}[Partial balayage and energy minimization]
Let $\mu$ and $k>0$ be as in Lemma~{\rm \ref{lem:non-empty}}. Using Proposition~\ref{prop:structure-balayage}, we know that $\nu := \Bal (\mu) \in L^{\infty}(B_{R(n,k)})$. We define the following bilinear form: 
\begin{equation*}
(\mu_{1},\mu_{2})_{e,k}  := \iint_{B_{R(n,k)}\times B_{R(n,k)}}\tilde{\Phi}_{k}(x-y)\,d\mu_{1}(y)\,d\mu_{2}(x) = \int_{B_{R(n,k)}}U_{k}^{\mu_{1}}(x)\,d\mu_{2}(x)
\end{equation*}
for all $\mu_{1},\mu_{2} \in L^{\infty}(B_{R(n,k)})$. Using Lemma~\ref{lem:basic-properties}, we can write \eqref{eq:balayage-characterization2} as $(\nu - \mu, \mathsf{m} - \nu)_{e,k} = 0$.
Accordingly, for each $\sigma \in L^{\infty}(B_{R(n,k)})$ with $\sigma \le \mathsf{m}$, we see that 
\begin{equation}
(\nu - \mu,  \sigma - \nu)_{e,k} = (\nu - \mu, \sigma - \mathsf{m})_{e,k} \ge 0, \label{eq:balayage-Stampacchia1}
\end{equation}
where the inequality follows from Lemma~\ref{lem:basic-properties}. By defining the ``energy'' $E_{k}(\lambda) := (\lambda,\lambda)_{e,k}$, we see that 
\[
(\nu - \mu, \sigma - \nu)_{e,k} = - E_{k}( \nu - \mu ) + (\nu - \mu, \sigma - \mu)_{e,k},
\]
thus from \eqref{eq:balayage-Stampacchia1}, we have 
\begin{equation}
E_{k}( \nu - \mu ) \le (\nu - \mu, \sigma - \mu)_{e,k} \quad \text{for all } \sigma \in L^{\infty}(B_{R(n,k)}) \text{ with }\sigma \le \mathsf{m}. \label{eq:energy1}
\end{equation}
When $U_{k}^{\mu_{1}},U_{k}^{\mu_{2}} \in H_{0}^{1}(B_{R(n,k)})$ we can compute 
\begin{align*}
(\mu_{1},\mu_{2})_{e,k} & = - \int_{B_{R(n,k)}} U_{k}^{\mu_{1}}(\Delta + k^{2}) U_{k}^{\mu_{2}} \,dx = a(U_{k}^{\mu_{1}},U_{k}^{\mu_{2}}), \\
E_{k}(\mu_{1}) & = a(U_{k}^{\mu_{1}},U_{k}^{\mu_{1}}) \ge 0, 
\end{align*}
where $a(\cdot,\cdot)$ is the (real) inner product given by Lemma~\ref{lem: properties of bilinear form}. Thus the notion of $E_k$ as an energy functional makes sense.
Using this observation and the Cauchy-Schwarz inequality, if we restrict $\sigma$ in \eqref{eq:energy1} to those functions satisfying $U_{k}^{\sigma} - U_{k}^{\mu} \in H_{0}^{1}(B_{R(n,k)})$, then we have 
\begin{align*}
E_{k}(\nu-\mu) & \le a(U_{k}^{\nu-\mu},U_{k}^{\sigma-\mu}) \\
& \le (a(U_{k}^{\nu}-U_{k}^{\mu},U_{k}^{\nu}-U_{k}^{\mu}))^{\frac{1}{2}}(a(U_{k}^{\sigma}-U_{k}^{\mu},U_{k}^{\sigma}-U_{k}^{\mu}))^{\frac{1}{2}} \\
& \equiv (E_{k}(\nu-\mu))^{\frac{1}{2}}(E_{k}(\sigma-\mu))^{\frac{1}{2}}.
\end{align*}
Therefore, the partial balayage $\nu$ minimizes the energy in the following sense: 
\begin{equation}
E_{k}(\nu - \mu) \le E_{k}(\sigma - \mu)  \quad \text{for all } \sigma \in L^{\infty}(B_{R(n,k)}) \text{ with } U_{k}^{\sigma} \in \mathscr{F}_{k}(\mu),
\end{equation}
where $\mathscr{F}_{k}(\mu)$ is given by \eqref{eq: scrF}. Here we also refer to \cite[Section~30]{Tre75BasicLinearPDE} for a related discussion. \end{rem}

\section{\label{sec:Structure-of-partial}Structure of partial balayage}

In this section we prove the following proposition which provides information concerning the structure of $\Bal(\mu)$. This will in particular be useful when we later on wish to construct $k$-quadrature domains.

\labeltext{$D(\mu)$ saturated set for $\Bal(\mu)$}{index:SaturatedSetDmu} \labeltext{$\omega(\mu)$ non-contact set for an obstacle problem}{index:Setomegamu}
\begin{prop}[Structure of partial balayage] \label{prop:structure-balayage}
Let $\mu$ and $k>0$ be as in Lemma~{\rm \ref{lem:non-empty}} and let $\nu := \Bal(\mu)$. Then 
\begin{equation}
\min\{\mu,\dm\}\le\nu \le \dm \quad \text{in }\mathbb{R}^{n}. \label{eq:nu-explicit-bound}
\end{equation}
Furthermore, if we define the open sets
\begin{align}
	D(\mu)&:=\mathbb{R}^{n}\setminus\supp(\mathsf{m}-\nu) \mbox{, and}\\
	\omega(\mu)&:=\begin{Bmatrix}\begin{array}{l|l} 
x\in\mathbb{R}^{n} & U_{k}^{\mu}(x)>U_{k}^{\nu}(x)\end{array}\end{Bmatrix}\label{eq:set-omega}
\end{align} 
then $\omega(\mu) \subset D(\mu)$ and for each measurable set $D$ with $\omega(\mu)\subset D \subset D(\mu)$ we have 
\begin{equation}
\nu = \chi_{D}\dm +\chi_{\mathbb{R}^{n}\setminus D}\mu.
\label{eq:balayage-structure}
\end{equation}
\end{prop}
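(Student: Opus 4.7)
My plan is to follow the approach in Gustafsson's lectures~\cite{Gus04LecturesBalayage} for the case $k=0$, adapting the bookkeeping to the positive-frequency setting. The structure of the proof will be: (i) upper bound; (ii) identification of $\nu$ on $\omega(\mu)$ and on $\mathrm{int}(\omega(\mu)^c)$; (iii) the decomposition \eqref{eq:balayage-structure}; (iv) the lower bound as a consequence.

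The upper bound $\nu \le \mathsf{m}$ is immediate, since $V_k^\mu \in \mathscr{F}_k(\mu)$ gives $(\Delta+k^2)V_k^\mu \ge -1$ distributionally, so $\nu = -(\Delta+k^2)V_k^\mu \le 1$. Using Lemma~\ref{lem:basic-properties} I would rewrite $\omega(\mu) = \{U_k^\mu > V_k^\mu\}$, which is open because both potentials are continuous. Next, the complementarity identity~\eqref{eq:balayage-characterization2} pairs the non-negative Radon measure $1 + (\Delta+k^2)V_k^\mu$ against the non-positive continuous function $V_k^\mu - U_k^\mu$; vanishing of this pairing forces the measure to be supported where $V_k^\mu = U_k^\mu$, giving $\nu = \mathsf{m}$ on $\omega(\mu)$ and thus $\omega(\mu) \subset D(\mu)$. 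On the open set $\mathrm{int}(\omega(\mu)^c) = \mathrm{int}\{V_k^\mu = U_k^\mu\}$ the two functions coincide identically, so $(\Delta+k^2)(V_k^\mu - U_k^\mu) = 0$ there distributionally, yielding $\nu = \mu$ on $\mathrm{int}(\omega(\mu)^c)$. Equivalently, $\supp(\nu - \mu) \subset \overline{\omega(\mu)}$.

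For the decomposition, fix any measurable $D$ with $\omega(\mu) \subset D \subset D(\mu)$. The inclusion $D \subset D(\mu)$ gives $\nu = \mathsf{m}$ on $D$ directly from the definition of $D(\mu)$ as $\mathbb{R}^n\setminus\supp(\mathsf{m}-\nu)$. To obtain $\nu = \mu$ on $\mathbb{R}^n \setminus D \subset \omega(\mu)^c$ as measures, I would upgrade the pointwise equality $\nu = \mu$ on $\mathrm{int}(\omega(\mu)^c)$ using two standard obstacle-problem facts: (a) $\nu$ has an $L^\infty$ density, and (b) the free boundary $\partial\omega(\mu)$ has Lebesgue measure zero. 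Both follow from $C^{1,1}_{\mathrm{loc}}$ regularity of the obstacle-problem solution, obtained by absorbing $k^2 V_k^\mu$ into a bounded right-hand side (since $V_k^\mu \in L^\infty$) and applying the classical theory, e.g.~\cite{PSU12FreeBoundary}. Granted (a) and (b), \eqref{eq:balayage-structure} follows. The lower bound then comes for free: on $D$, $\nu = \mathsf{m} \ge \min\{\mu, \mathsf{m}\}$; on $\mathbb{R}^n \setminus D$, $\nu = \mu$ combined with $\nu \le \mathsf{m}$ forces $\mu \le \mathsf{m}$ a.e.\ there, so $\mu = \min\{\mu, \mathsf{m}\}$.

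The main obstacle will be the regularity/measure-zero step just described. These facts are classical for $\Delta$ with a sufficiently smooth obstacle, but here the obstacle $U_k^\mu$ is only $C^{1,\alpha}$ and the operator is $\Delta + k^2$; I expect the reduction to a classical zero-order obstacle problem with $L^\infty$ data sketched above to suffice, though it is where most of the non-trivial work resides.
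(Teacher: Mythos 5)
The parts of your plan through ``$\nu=\dm$ on $\omega(\mu)$, hence $\omega(\mu)\subset D(\mu)$, and $\nu=\dm$ on any $D\subset D(\mu)$'' are correct and coincide with what the paper does with the complementarity identity \eqref{eq:balayage-characterization2}. The genuine gap is your item (b): the claim that $\partial\omega(\mu)$ has Lebesgue measure zero, and in particular that this ``follows from $C^{1,1}_{\rm loc}$ regularity.'' $C^{1,1}$ regularity of the solution gives no information whatsoever about the Lebesgue measure of the free boundary; measure-zero statements come from nondegeneracy (Caffarelli-type density or porosity estimates), which require the right-hand side to be bounded away from zero on the non-contact set near the free boundary. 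Here $(\Delta+k^{2})(U_{k}^{\mu}-U_{k}^{\nu})=\mu-1$ on $\omega(\mu)$, and nothing prevents the density of $\mu$ from being close to (or equal to) $1$ on parts of its support, exactly where free boundary points inside $\supp(\mu)$ can occur; so (b) is neither a consequence of $C^{1,1}$ regularity nor available in this generality. Since one may take $D=\omega(\mu)$, the set $\mathbb{R}^{n}\setminus D$ contains all of $\partial\omega(\mu)$, and without (b) your identification of $\nu$ with $\mu$ there is incomplete.

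The repair is the paper's device, and it makes (b) unnecessary: once $\nu\in L^{\infty}$ is known (your item (a); the paper obtains it by first proving the lower bound in \eqref{eq:nu-explicit-bound} through an auxiliary obstacle problem with $-(\Delta+k^{2})\xi=(1-\mu)_{+}$, essentially a Lewy--Stampacchia comparison, whereas you propose to import it from classical obstacle-problem regularity after absorbing $k^{2}V_{k}^{\mu}$ into the data), both $U_{k}^{\mu}$ and $U_{k}^{\nu}$ lie in $W^{2,p}_{\rm loc}$, and the difference $U_{k}^{\mu}-U_{k}^{\nu}$ vanishes on the closed set $\omega(\mu)^{c}$. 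A Sobolev function's gradient vanishes a.e.\ on its zero set; applying this twice gives $(\Delta+k^{2})(U_{k}^{\nu}-U_{k}^{\mu})=0$ a.e.\ on all of $\omega(\mu)^{c}$, not merely on its interior, i.e.\ $\nu=\mu$ a.e.\ on $\omega(\mu)^{c}\supset\mathbb{R}^{n}\setminus D$. With that, your decomposition and your derivation of the lower bound from it (which is a genuinely different, and slicker, order of argument than the paper's, where the lower bound is proved first precisely to secure $\nu\in L^{\infty}$) go through — provided your route to (a) is carried out independently of the structure result, e.g.\ via the Lewy--Stampacchia inequality or the regularity theory you cite, so that no circularity enters.
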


\begin{rem} \label{rem:sructure-balayage-explain}
The corresponding result for $k=0$ can be found in \cite[Theorem~2.3(c)]{Gus90QuadratureDomains}. 
We refer also to \cite{GS09PartialBalayage,Gus04LecturesBalayage,Sjo07PartialBalayage}, and in particular \cite[Figure~3]{Gus04LecturesBalayage} for a visualization.

The set $D(\mu)$ is called the saturated set for $\nu=\Bal(\mu)$, which is the largest open set $\mathcal{O}$ in $\mathbb{R}^{n}$ such that $\nu|_{\mathcal{O}}=\mathsf{m}|_{\mathcal{O}}$. By our assumptions and Lemma~\ref{lem:basic-properties} both $\supp(\nu)$ and $\supp(\mu)$ are contained in $B_{R(n,k)}$, and hence $\overline{D(\mu)} \subset B_{R(n,k)}$. We also note that if $\mathcal{O}\subset D(\mu) \setminus \omega(\mu)$ has positive Lebesgue measure then $\mu|_{\mathcal{O}}= \nu|_{\mathcal{O}}=\dm|_{\mathcal{O}}$. In particular, if the density of $\mu$ is greater than $1$ on $\supp(\mu)$ it holds that $\dm(D(\mu)\setminus \omega(\mu))=0$. See also \cite[Remark~2.4]{Gus90QuadratureDomains} for some discussions on the relation between $D(\mu)$ and $\omega(\mu)$ for the case when $k=0$.
\end{rem}

\begin{proof} [Proof of Proposition~{\rm \ref{prop:structure-balayage}}]

\textbf{Step 1: A minimization problem.} Let $\xi\in H_{0}^{1}(B_{R(n,k)})$ be the unique solution to $-(\Delta+k^{2})\xi=(1-\mu)_{+}$, and consider the constraint set 
\[
\hat{\mathcal{K}}_{k}=\begin{Bmatrix}\begin{array}{l|l}
w\in H_{0}^{1}(B_{R(n,k)}) & w\ge\xi-u_{*}\end{array}\end{Bmatrix},
\]
where $u_{*} \in H_{0}^{1}(B_{R(n,k)})$ is the function appearing in the proof of Proposition~\ref{prop:partial-balayage-obstacle}. We recall that $u_*$ minimizes the functional $a(u, u)$ among all functions in $\tilde{\mathcal{K}}_k$, where $a$ is the bilinear form defined in \eqref{eq:bilinear-form-Appendix} and $\tilde{\mathcal{K}}_k$ was defined in~\eqref{eq:constraint set step 2 obstacle problem}. Note that $\hat{\mathcal{K}}_{k}$ is nonempty since $\xi-u_* \in \hat{\mathcal{K}}_{k}$.

By Lemma~\ref{lem: properties of bilinear form} and Stampacchia's Theorem (see~\cite[Theorem~5.6]{Bre11PDE}) there exists a unique $w_{*}\in \hat{\mathcal{K}}_{k}$ which minimizes the functional $w \mapsto a(w, w)$ among $w \in \hat{\mathcal{K}}_{k}$. Moreover, the minimizer $w_*$ is characterized by the property
\begin{equation}
a(w_{*},w-w_{*}) 
= \langle -(\Delta + k^{2}) w_{*}, w - w_{*} \rangle 
\ge 0 \quad \text{for all } w\in \hat{\mathcal{K}}_{k}. \label{eq:minimizer3}
\end{equation}

\medskip

\textbf{Step 2: Complementarity formulation.} Since $w_{*}\in \hat{\mathcal{K}}_{k}$, we can in~\eqref{eq:minimizer3} restrict $w$ to those satisfying $w\ge w_{*}$. The definition of $a$ implies that
\begin{equation}
(\Delta+k^{2})w_{*} \le 0 \quad\text{in } B_{R(n,k)}.\label{eq:complementarity1-1}
\end{equation}
Choosing $w=\xi-u_{*}$ in \eqref{eq:minimizer3}, 
\[
\langle (\Delta + k^{2}) w_{*}, \xi - u_{*} - w_{*} \rangle \le 0,
\]
which along with \eqref{eq:complementarity1-1} and the fact that $w_{*}\ge\xi-u_{*}$ implies
\begin{equation}
\langle (\Delta + k^{2}) w_{*}, \xi - u_{*} - w_{*} \rangle = 0.\label{eq:complementarity2-1}
\end{equation}

In fact, if $w_{*}\in \hat{\mathcal{K}}_{k}$ satisfies \eqref{eq:complementarity1-1}
and \eqref{eq:complementarity2-1}, then
\begin{align*}
&\langle (\Delta + k^{2}) w_{*}, w - w_{*} \rangle \\
& \qquad = \langle (\Delta + k^{2}) w_{*}, w - (\xi - u_{*}) \rangle + \langle (\Delta + k^{2}) w_{*}, \xi - u_{*} - w_{*} \rangle \le 0 ,
\end{align*}
for all $w\in \hat{\mathcal{K}}_{k}$. Hence the minimizer $w_{*}\in \hat{\mathcal{K}}_{k}$ can also be characterized by the complementarity problem \eqref{eq:complementarity1-1} and \eqref{eq:complementarity2-1}. 

\medskip

\textbf{Step 3: An energy inequality.} 
We can rewrite \eqref{eq:complementarity2-1} as 
\begin{equation}
\langle (\Delta + k^{2}) w_{*}, \xi - w_{*} \rangle = \langle (\Delta + k^{2}) w_{*}, u_{*} \rangle. \label{eq:bounded-proof1}
\end{equation}
The inequalities $(\Delta + k^{2})\xi = -(1-\mu)_{+} \le 0$ and $w_{*}\ge\xi-u_{*}$ (i.e. $u_{*}\ge\xi-w_{*}$) thus imply that
\begin{equation}
\langle (\Delta + k^{2})\xi , \xi - w_{*} \rangle \ge \langle (\Delta + k^{2})\xi , u_{*} \rangle. \label{eq:bounded-proof2}
\end{equation}
Combining \eqref{eq:bounded-proof1} and \eqref{eq:bounded-proof2}, one finds 
\begin{align*}
 & \quad a(\xi-w_{*},\xi-w_{*}) = \langle -(\Delta + k^{2})(\xi - w_{*}), \xi - w_{*} \rangle \\
 & \le \langle -(\Delta + k^{2})(\xi - w_{*}), u_{*} \rangle =a(\xi-w_{*},u_{*})\\
 & \le a(\xi-w_{*},\xi-w_{*})^{\frac{1}{2}}a(u_{*},u_{*})^{\frac{1}{2}}.
\end{align*}
By Lemma~\ref{lem: properties of bilinear form} the bilinear form $a$ is positive, and thus we obtain the energy inequality 
\begin{equation}
a(\xi-w_{*},\xi-w_{*})\le a(u_{*},u_{*}).\label{eq:bounded-proof3}
\end{equation}

\medskip

\labeltext{Step~4}{Step4:prop:structure-balayage}\textbf{Step 4: Verifying that $w_{*}=\xi-u_{*}$.}
If we can show that $\xi-w_{*}\in\tilde{\mathcal{K}}_{k}$, i.e.\ that it satisfies 
\begin{equation}
\xi-w_{*}\ge\varphi-U_{k}^{\mu}\quad\text{in}\;\;B_{R(n,k)}\label{eq:WTS-goal}
\end{equation}
where $\varphi$ is the function in \eqref{eq:varphi-auxiliary}, then since $u_*$ minimizes $a(u, u)$ among all $u\in \tilde{\mathcal{K}}_k$ the inequality in \eqref{eq:bounded-proof3} implies that $\xi-w_{*}=u_{*}$ in $B_{R(n,k)}$, in other words $w_{*}=\xi-u_{*}$ in $B_{R(n,k)}$. 

To prove~\eqref{eq:WTS-goal} we argue as follows. Let 
\begin{equation}
\phi:=\min\{w_{*},\xi-(\varphi-U_{k}^{\mu})\}\quad\text{in}\;\;B_{R(n,k)}.\label{eq:auxiliary-phi}
\end{equation}
By the definition of $\phi$ and Proposition~\ref{prop: Min principle},
\begin{equation}
\phi \le w_{*}, \qquad \phi \in \hat{\mathcal{K}}_{k},\quad
\text{and} \quad -(\Delta + k^{2})\phi \ge 0 \quad \text{in } B_{R(n,k)} .
\label{eq:bounded-proof7}
\end{equation}
Using \eqref{eq:bounded-proof7} and the facts that $w_* \in H^1_0(B_{R(n,k)})$ and $-(\Delta+k^2) w_* \geq 0$ in $B_{R(n,k)}$, we have in terms of distributional pairings in $B_{R(n,k)}$ that 
\begin{align*}
    a(\phi,\phi) &= \langle -(\Delta+k^2) \phi, \phi \rangle \\
     &\leq \langle -(\Delta+k^2) \phi, w_* \rangle = \langle \phi, -(\Delta+k^2)w_* \rangle \\
     &\leq \langle w_*, -(\Delta+k^2)w_* \rangle = a(w_*,w_*).
\end{align*}
Since $w_{*}$ was defined to be the unique minimizer of $a(w, w)$ among $w\in\hat{\mathcal{K}}_{k}$, we obtain
\[
\phi=w_{*}\quad\text{in}\;\;B_{R(n,k)}.
\]
By the definition of $\phi$ this can equivalently be stated as
\[
w_{*}\le\xi-(\varphi-U_{k}^{\mu})\quad\text{in}\;\;B_{R(n,k)}.
\]
After rearranging we deduce the desired inequality \eqref{eq:WTS-goal}. 
By the discussion following \eqref{eq:WTS-goal} it holds that 
\[
w_{*}=\xi-u_{*}\quad\text{in}\;\;B_{R(n,k)}.
\]

\medskip

\textbf{Step 5: Proving \eqref{eq:nu-explicit-bound}.}
By \ref{Step4:prop:structure-balayage}, \eqref{eq:complementarity1-1}, and the definition of $\xi$,
\begin{equation}
(\Delta+k^{2})u_{*} \ge (\Delta+k^{2})\xi = -(1-\mu)_{+}.
\label{eq:conclusion-bounded1}
\end{equation}
From \eqref{eq:u-star-minimizer} and \eqref{eq_uknu_vkmu} we deduce that
\begin{equation}
u_{*}=\varphi-U_{k}^{\nu}. \label{eq:u-star-balayage}
\end{equation}
Combining \eqref{eq:conclusion-bounded1} and \eqref{eq:u-star-balayage}, we obtain that in $B_{R(n,k)}$ 
\begin{align*}
 & \nu - 1 \ge -(1-\mu)_{+} = -\max\{1-\mu,0\} = \min\{\mu-1,0\}\\
\iff & \min\{1,\mu\}\le\nu.
\end{align*}
By the definition of partial balayage $\nu\le1$ in $B_{R(n,k)}$, and we have arrived at~\eqref{eq:nu-explicit-bound}. 

\medskip

\textbf{Step 6: Proving \eqref{eq:balayage-structure}.} 
By the Calder\'{o}n--Zygmund inequality $U_{k}^{\mu},U_{k}^{\nu} \in \bigcap_{p < \infty} W_{\rm loc}^{2,p}(\mathbb{R}^{n})$ and hence $\omega(\mu)$ is well defined as an open set. 
From \eqref{eq:balayage-characterization2} and Proposition~\ref{prop:partial-balayage-obstacle}, it follows that
\[
0\le\int_{\omega(\mu)}(U_{k}^{\mu}-U_{k}^{\nu})\,d(\mathsf{m}-\nu)\le\int_{B_{R(n,k)}}(U_{k}^{\mu}-U_{k}^{\nu})\,d(\mathsf{m}-\nu)=0,
\]
and hence 
\begin{equation}
\int_{\omega(\mu)}(U_{k}^{\mu}-U_{k}^{\nu})\,d(\mathsf{m}-\nu)=0.\label{eq:geometry0}
\end{equation}
Consequently, $\nu|_{\omega(\mu)}= \dm|_{\omega(\mu)}$. Since $\omega(\mu)^c = \{x\in \mathbb{R}^n: U^\nu_k(x) = U^\mu_k(x)\}$ and $U^\mu_k, U^\nu_k \in W^{2,p}_{loc}(\mathbb{R}^n)$ it holds that $(\Delta+k^2)(U^\nu_k-U^\mu_k) =0$ almost everywhere on this set. Therefore, $\nu|_{\omega(\mu)^c}= \mu|_{\omega(\mu)^c}$. 

Consequently, for any $D$ as in the proposition we by the definition of $D(\mu)$ have $\nu|_{D \setminus \omega(\mu)} = \dm|_{D \setminus \omega(\mu)}$ and thus the claimed decomposition
\begin{equation*}
	\nu = \chi_D \dm + \chi_{\mathbb{R}^n\setminus D}\mu
\end{equation*}
follows. This completes the proof of Proposition~\ref{prop:structure-balayage}.
\end{proof}

We next deduce the following lemma.

\begin{lem}
	\label{lem:observation-new-patch}
Let $\mu$ and $k>0$ be as in Lemma~{\rm \ref{lem:non-empty}}. Suppose there is an open set $D$ such that $\overline{D} \subset B_{R(n,k)}$ and $\supp (\mu) \subset D$ and a distribution $u$ satisfying
\begin{equation}
\begin{cases}
(\Delta + k^{2})u = \chi_{D} - \mu & \text{in } B_{R(n,k)}, \\
u > 0 & \text{in } D, \\
u = 0 & \text{in } B_{R(n,k)} \setminus {D}.
\end{cases} \label{eq:structure-patch}
\end{equation}
Then $\Bal (\mu) = \chi_{D} \mathsf{m}$, $D = \omega(\mu)$ and $D$ is a $k$-quadrature domain for $\mu$.
\end{lem}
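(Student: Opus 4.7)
The natural candidate for the obstacle solution is $z := U_k^{\mu} - u$ (with $u$ extended by $0$ outside $B_{R(n,k)}$, which is consistent since $\overline{D} \subset B_{R(n,k)}$ forces $u=0$ near $\partial B_{R(n,k)}$). My plan is to prove $z = V_k^{\mu}$, the maximal element of $\mathscr{F}_k(\mu)$ from Proposition~\ref{prop:partial-balayage-obstacle}. All three conclusions then follow quickly: $\Bal(\mu) = -(\Delta+k^2)V_k^{\mu} = -(\Delta+k^2)(U_k^{\mu}-u) = \mu + (\chi_D-\mu) = \chi_D$; the set $\omega(\mu)=\{U_k^{\mu}>U_k^{\Bal(\mu)}\} = \{U_k^{\mu}>V_k^{\mu}\} = \{u>0\} = D$; and the quadrature property follows from Proposition~\ref{prop_quadrature_pde_equivalence} applied to the zero-extension of $u$.

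\textbf{Step 1: $z \in \mathscr{F}_k(\mu)$.} Since $(\Delta+k^2)u = \chi_D - \mu \in L^{\infty}$ and $u$ vanishes in the open neighborhood $B_{R(n,k)}\setminus \overline{D}$ of $\partial B_{R(n,k)}$, elliptic regularity gives $u \in W^{2,p}_{\mathrm{loc}} \cap H^1(B_{R(n,k)})$, hence $z \in H^1(B_{R(n,k)})$. A direct computation gives $(\Delta+k^2)z = -\mu - (\chi_D - \mu) = -\chi_D \ge -1$; the hypothesis $u\ge 0$ gives $z \le U_k^{\mu}$; and $z = U_k^{\mu}$ near $\partial B_{R(n,k)}$. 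Thus $z \in \mathscr{F}_k(\mu)$, and by maximality $z \le V_k^{\mu}$ in $B_{R(n,k)}$.

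\textbf{Step 2: $V_k^{\mu} \le z$ (the key step).} Let $\phi := V_k^{\mu} - z \ge 0$. On the open set $B_{R(n,k)}\setminus\overline{D}$ we have $u \equiv 0$, so $\phi = V_k^{\mu} - U_k^{\mu} \le 0$ by the definition of $\mathscr{F}_k(\mu)$; combined with $\phi \ge 0$ this forces $\phi \equiv 0$ there. By elliptic regularity $V_k^{\mu}$ and $u$ are continuous (in fact $C^{1,\alpha}_{\mathrm{loc}}$), so $\phi = 0$ on $\partial D$. On $D$ one computes
\[
(\Delta + k^2)\phi = -\Bal(\mu) + \chi_D = 1 - \Bal(\mu) \ge 0
\]
using \eqref{eq:nu-explicit-bound}. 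Since $kR(n,k) = \tfrac12 j_{\frac{n-2}{2},1} < j_{\frac{n-2}{2},1}$, the maximum principle for $\Delta + k^2$ applies on $D \subset B_{R(n,k)}$ (via Proposition~\ref{prop: Min principle}) and yields $\phi \le 0$ on $D$. Combined with $\phi \ge 0$ we conclude $\phi \equiv 0$, so $V_k^{\mu} = z$.

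\textbf{Step 3: Conclusions.} The identifications $\Bal(\mu) = \chi_D\dm$ and $\omega(\mu) = D$ follow from $V_k^{\mu} = U_k^{\mu} - u$ as described in the strategy. For the quadrature property, note that the zero-extension of $u$ to $\mathbb{R}^n$ (which is consistent since $u=0$ in a neighborhood of $\partial B_{R(n,k)}$) is $C^{1,\alpha}_{\mathrm{loc}}$ and satisfies $(\Delta+k^2)u = \chi_D - \mu$ in $\mathbb{R}^n$; since $u$ vanishes in the open set $\mathbb{R}^n \setminus \overline{D}$ and $\nabla u$ vanishes there (and by $C^1$-continuity also on $\partial D$, since the one-sided trace from outside $\overline D$ is $0$), we get $u = |\nabla u| = 0$ on $\mathbb{R}^n \setminus D$. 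Proposition~\ref{prop_quadrature_pde_equivalence} then implies that $D$ is a $k$-quadrature domain for $\mu$.

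The main obstacle is Step 2: the inequality $\phi \ge 0$ coming from the obstacle-problem characterization goes the ``wrong way'' on $D$, so the reverse inequality must be extracted from the PDE $(\Delta+k^2)\phi \ge 0$ together with the boundary condition $\phi|_{\partial D} = 0$, which is precisely where the small-frequency assumption $D \subset B_{R(n,k)}$ enters to legitimize the maximum principle.
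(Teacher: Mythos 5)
Your proposal is correct and follows essentially the same route as the paper: both identify $U_k^{\mu}-u=U_k^{D}$ as the largest element of $\mathscr{F}_{k}(\mu)$ via the maximum principle on $D$ (your Step 2 is the paper's comparison argument specialized to $v=V_k^{\mu}$, using $(\Delta+k^2)v\ge -1$), and then read off $\Bal(\mu)=\chi_D\mathsf{m}$, $\omega(\mu)=D$, and the quadrature property from Proposition~\ref{prop_quadrature_pde_equivalence}. Two small repairs: the maximum principle you need is Proposition~\ref{prop: Max principle}, not Proposition~\ref{prop: Min principle}, and to get $\nabla u=0$ on all of $\partial D$ argue as the paper does that the $C^1$ function $u\ge 0$ attains its minimum at every point of $D^{c}$ (your one-sided-trace argument can fail at boundary points, e.g.\ of slit type, that are not in the closure of $\mathbb{R}^n\setminus\overline{D}$).
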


\begin{proof}[Proof of Lemma~{\rm \ref{lem:observation-new-patch}}]
Since $u$ (extended by zero outside $B_{R(n,k)}$) is a compactly supported distribution, we have 
\[
u = \tilde{\Phi}_{k}*(-(\Delta + k^{2})u) = U_{k}^{\mu} - U_{k}^{D}.
\]
Since $u$ is non-negative, then we know that $U_{k}^{D} \in \mathscr{F}_{k}(\mu)$, where $\mathscr{F}_{k}(\mu)$ is the collection of functions given in \eqref{eq: scrF}. For each $v \in \mathscr{F}_{k}(\mu)$, since $u = 0$ in $B_{R(n,k)} \setminus D$, we see that 
\[
w := U_{k}^{D} - v = U_{k}^{D} - U_{k}^{\mu} + U_{k}^{\mu} - v \ge 0 \quad \text{in } B_{R(n,k)} \setminus D.
\]
On the other hand, we have $(\Delta + k^{2})w = -1 - (\Delta + k^{2})v \le 0$ in $D$. Therefore the maximum principle in Proposition~\ref{prop: Max principle} implies that $w \ge 0$ in $D$ as well. This shows that $U_{k}^{D}$ is the largest element in $\mathscr{F}_{k}(\mu)$, so by the definition of partial balayage~\eqref{eq:balayage-definition-equivalent} we have 
\[
\Bal(\mu) = -(\Delta + k^{2})U_{k}^{D} = \chi_{D} \mathsf{m}.
\]
By the above we see that $D = \{u>0\}=\{U^\mu_k>U^{\Bal(\mu)}_k\}=\omega(\mu)$. 

Since $u\in C^{1}(\mathbb{R}^n)$ attains its minimum in $D^c$ it holds that $|\nabla u| =0$ in $D^c$. Therefore, since by assumption $\supp(\mu) \subset D$, Proposition~\ref{prop_quadrature_pde_equivalence} implies that $D$ is a $k$-quadrature domain for~$\mu$.
\end{proof}

\section{\label{sec:balayage-iterative}Performing balayage in smaller steps }

Fix $\gamma>0$ and assume that $\mu_1, \mu_2 \in L^\infty(B_\gamma)$ are non-negative. By Proposition~\ref{prop:partial-balayage-obstacle}, there exists a positive constant $c_{n}$ such that if 
\begin{equation}\label{eq: k bound mu1}
0 <k <  c_n \min  \{\gamma^{-1},\mu_{1}(\mathbb{R}^n)^{-\frac{1}{n}}\},
\end{equation}
then $U_{k}^{\Bal(\mu_{1})}$ is the largest element in $\mathscr{F}_{k}(\mu_{1})$ (defined as in~\eqref{eq: scrF})
and $U_{k}^{\Bal(\mu_{1})}=U_{k}^{\mu_{1}}$ near $\partial B_{R(n,k)}$. 
Again Proposition~\ref{prop:partial-balayage-obstacle} also implies that if
\begin{equation}\label{eq: k bound mu1 + mu2}
0 <k <  c_n \min  \{\gamma^{-1},(\mu_{1}+\mu_{2})(\mathbb{R}^n)^{-\frac{1}{n}}\},
\end{equation}
then $U_{k}^{\Bal(\mu_{1} + \mu_{2})}$ is the largest element of $\mathscr{F}_{k}(\mu_{1} + \mu_{2})$
and $U_{k}^{\Bal(\mu_{1}+\mu_{2})} = U_{k}^{\mu_{1}+\mu_{2}}$ near $\partial B_{R(n,k)}$. 

Finally, if we additionally assume that $\supp (\nu_{1}) \subset B_{\gamma}$ with $\nu_1=\Bal(\mu_{1})$, Proposition~\ref{prop:partial-balayage-obstacle} implies that if 
\begin{equation}\label{eq: k bound nu1 + mu2}
0 <k <  c_n \min  \{\gamma^{-1},(\nu_{1}+\mu_{2})(\mathbb{R}^n)^{-\frac{1}{n}}\},
\end{equation}
then $U_{k}^{\Bal(\nu_{1}+\mu_{2})}$ is the largest element of $\mathscr{F}_{k}(\nu_{1} + \mu_{2})$
and $U_{k}^{\Bal(\nu_{1}+\mu_{2})} = U_{k}^{\nu_{1}+\mu_{2}}$ near $\partial B_{R(n,k)}$. Using Proposition~{\rm \ref{prop:structure-balayage}} we observe that 
\[
\supp(\mu_{1}) \subset \supp(\mu_{1} + \mu_{2}) \subset \supp(\nu_{1} + \mu_{2}) \subset B_{\gamma}.
\]

We are now ready to prove the following proposition: 
\begin{prop}
\label{prop:iteration-balayage-operator}Let $\gamma>0$ and $\mu_{1},\mu_{2}\in L^{\infty}(B_\gamma)$ be non-negative and such that $\supp (\Bal(\mu_{1})) \subset B_{\gamma}$. If
\[
0 < k < c_n \min \{\gamma^{-1},(\mu_{1}+\mu_{2})(\mathbb{R}^n)^{-\frac{1}{n}},(\Bal(\mu_{1})+\mu_{2})(\mathbb{R}^n)^{-\frac{1}{n}}\} 
\]
with $c_n$ {\rm (}depending only on the dimension $n${\rm )}, then 
\[
\Bal(\mu_{1}+\mu_{2})=\Bal(\Bal(\mu_{1})+\mu_{2})
\]
and
\begin{equation*}
	\omega(\mu_1+\mu_2) = \omega(\mu_1) \cup \omega(\Bal(\mu_1)+\mu_2).
\end{equation*}
\end{prop}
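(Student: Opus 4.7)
The plan is to set $\nu_1 := \Bal(\mu_1)$ and establish the stronger pointwise identity $V_k^{\mu_1+\mu_2} = V_k^{\nu_1+\mu_2}$ between the largest elements of the two obstacle-problem families. Applying $-(\Delta+k^2)$ then yields $\Bal(\mu_1+\mu_2) = \Bal(\nu_1+\mu_2)$ by Definition~\ref{def:obstacle-balayage}, and the set identity drops out at the end. Throughout, the hypotheses on $k$ and the support conditions ensure that the three balayages appearing in the statement are all covered by Proposition~\ref{prop:partial-balayage-obstacle}, Lemma~\ref{lem:basic-properties} and Proposition~\ref{prop:structure-balayage}.

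For the inclusion $V_k^{\nu_1+\mu_2} \leq V_k^{\mu_1+\mu_2}$, I would first verify that $\mathscr{F}_k(\nu_1+\mu_2) \subset \mathscr{F}_k(\mu_1+\mu_2)$: the obstacle inequality follows from $U_k^{\nu_1} \leq U_k^{\mu_1}$ (Lemma~\ref{lem:basic-properties}\eqref{eq:basic2}), and the boundary condition from the fact that $U_k^{\nu_1} = U_k^{\mu_1}$ near $\partial B_{R(n,k)}$ by \eqref{eq:basic3}. Maximality then gives the inequality. For the reverse inclusion $V_k^{\mu_1+\mu_2} \leq V_k^{\nu_1+\mu_2}$, the key step is to show $V_k^{\mu_1+\mu_2} \in \mathscr{F}_k(\nu_1+\mu_2)$; only the condition $V_k^{\mu_1+\mu_2} \leq U_k^{\nu_1+\mu_2}$ is non-trivial. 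On $\omega(\mu_1)^c$ we have $U_k^{\mu_1} = U_k^{\nu_1}$ by the definition of $\omega(\mu_1)$, so $V_k^{\mu_1+\mu_2} \leq U_k^{\mu_1+\mu_2} = U_k^{\nu_1+\mu_2}$. On $\omega(\mu_1)$, I would apply a maximum principle to $W := V_k^{\mu_1+\mu_2} - U_k^{\nu_1+\mu_2}$. By Proposition~\ref{prop:structure-balayage}, $\nu_1 = \mathsf{m}$ on $\omega(\mu_1) \subset D(\mu_1)$, so
\[
(\Delta+k^2)W = -\Bal(\mu_1+\mu_2) + \nu_1 + \mu_2 \geq -1 + 1 + \mu_2 \geq 0
\]
using the bound $\Bal(\mu_1+\mu_2) \leq \mathsf{m}$ from Lemma~\ref{lem:basic-properties}. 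On $\partial\omega(\mu_1)$, continuity of the potentials forces $U_k^{\mu_1} = U_k^{\nu_1}$, hence $W \leq V_k^{\mu_1+\mu_2} - U_k^{\mu_1+\mu_2} \leq 0$. Since $\overline{\omega(\mu_1)} \subset \overline{D(\mu_1)} \subset B_{R(n,k)}$, and $k R(n,k) = \tfrac{1}{2} j_{\frac{n-2}{2},1}$ is strictly below the first Dirichlet eigenvalue threshold for $-\Delta$ on $B_{R(n,k)}$, Proposition~\ref{prop: Max principle} applies and delivers $W \leq 0$ on $\omega(\mu_1)$.

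Combining the two inclusions yields $V_k^{\mu_1+\mu_2} = V_k^{\nu_1+\mu_2}$, which by \eqref{eq:balayage-definition-equivalent} is the first claimed identity. For the set identity, using this and the trivial decomposition
\[
U_k^{\mu_1+\mu_2} - U_k^{\Bal(\mu_1+\mu_2)} = \bigl(U_k^{\nu_1+\mu_2} - U_k^{\Bal(\nu_1+\mu_2)}\bigr) + \bigl(U_k^{\mu_1} - U_k^{\nu_1}\bigr),
\]
both summands are non-negative by Lemma~\ref{lem:basic-properties}, so the set where the left-hand side is positive is precisely the union of the sets where each summand is positive, namely $\omega(\nu_1+\mu_2) \cup \omega(\mu_1)$. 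The main obstacle in the argument is the maximum-principle step on $\omega(\mu_1)$: although this set is only open and possibly quite irregular, the cushion $kR(n,k) < j_{\frac{n-2}{2},1}$ together with monotonicity of Dirichlet eigenvalues in the domain ensures that $(\Delta+k^2)$ satisfies the maximum principle on any subdomain of $B_{R(n,k)}$, so the invocation of Proposition~\ref{prop: Max principle} is legitimate.
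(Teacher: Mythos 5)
Your argument is correct, and it reaches the same two maximality comparisons as the paper, but the reverse inequality is obtained by a genuinely different route. The inclusion $\mathscr{F}_{k}(\Bal(\mu_1)+\mu_2)\subset\mathscr{F}_{k}(\mu_1+\mu_2)$ (via \eqref{eq:basic2}--\eqref{eq:basic3}) and the final set identity, obtained by writing $U_k^{\mu_1+\mu_2}-U_k^{\Bal(\mu_1+\mu_2)}$ as a sum of two non-negative terms, coincide with Steps 1 and 3 of the paper's proof. The difference is in how you establish $V_k^{\mu_1+\mu_2}\le U_k^{\Bal(\mu_1)+\mu_2}$: you split $B_{R(n,k)}$ into $\omega(\mu_1)$ and its complement, use Proposition~\ref{prop:structure-balayage} to get $\Bal(\mu_1)=\mathsf{m}$ on $\omega(\mu_1)$, the bound $\Bal(\mu_1+\mu_2)\le\mathsf{m}$ from Lemma~\ref{lem:basic-properties}, and then the maximum principle (Proposition~\ref{prop: Max principle}) on the possibly irregular open set $\omega(\mu_1)$. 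The paper instead observes that $V_k^{\mu_1+\mu_2}-U_k^{\mu_2}\in\mathscr{F}_{k}(\mu_1)$ (which needs only $\mu_2\ge 0$ and the defining properties of $\mathscr{F}_k(\mu_1+\mu_2)$) and invokes maximality of $U_k^{\Bal(\mu_1)}$ in $\mathscr{F}_{k}(\mu_1)$; this avoids both the structure theorem and any maximum principle, so it is lighter and does not require knowing anything about $\omega(\mu_1)$ at that stage. Your heavier route is nonetheless sound: the eigenvalue cushion $\lambda_1(\omega(\mu_1))\ge\lambda_1(B_{R(n,k)})=4k^2>k^2$ legitimises Proposition~\ref{prop: Max principle} on any open subset, and since $W=V_k^{\mu_1+\mu_2}-U_k^{\Bal(\mu_1)+\mu_2}$ is $C^{1}$ (all densities are $L^\infty$), the pointwise inequality $W\le 0$ on $\partial\omega(\mu_1)$ does give $W_+\in H_0^1(\omega(\mu_1))$ by the standard approximation $(W-\epsilon)_+\to W_+$; it would be worth stating that last translation explicitly, since the hypothesis of the maximum principle is phrased in the $H_0^1$ sense rather than pointwise.
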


\begin{proof}
Note that if $k$ satisfies the inequality in the proposition then $k$ satisfies the inequalities~\eqref{eq: k bound mu1},~\eqref{eq: k bound mu1 + mu2}, and~\eqref{eq: k bound nu1 + mu2}. 
We begin by showing the equality $\Bal(\mu_1+\mu_2)= \Bal(\Bal(\mu_1)+\mu_2)$.
Since $U^{\nu_1+\mu_2}_k = U_k^{\nu_1}+U_k^{\mu_2}$ and $U_k^{\nu_1}=U_k^{\mu_1}$ near $\partial B_{R(n, k)}$ it suffices to show that 
\begin{equation}
U_{k}^{\Bal(\nu_1+\mu_{2})} = U_{k}^{\Bal(\mu_{1}+\mu_{2})} \quad \text{in } B_{R(n,k)}.\label{eq:projection1}
\end{equation}

\medskip

\textbf{Step 1: The implication ``$\le$'' of \eqref{eq:projection1}.} Using Lemma~\ref{lem:basic-properties} we observe that 
\begin{align*}
U_{k}^{\Bal(\nu_1+\mu_{2})} & \le U_{k}^{\nu_1+\mu_{2}}=U_{k}^{\nu_1}+U_{k}^{\mu_{2}}\\
 & \le U_{k}^{\mu_{1}}+U_{k}^{\mu_{2}}=U_{k}^{\mu_{1}+\mu_{2}} \quad \text{in } B_{R(n,k)}
\end{align*}
and 
\[
(\Delta+k^{2})U_{k}^{\Bal(\nu_1+\mu_{2})} \ge -1 \quad \text{in } B_{R(n,k)}.
\]
Thus $U_{k}^{\Bal(\nu_1+\mu_{2})}\in \mathscr{F}(\mu_{1} + \mu_{2})$.
Since $U_{k}^{\Bal(\mu_{1} + \mu_{2})}$ is the largest element in $\mathscr{F}(\mu_{1} + \mu_{2})$, we arrive at 
\begin{equation}
U_{k}^{\Bal(\nu_1+\mu_{2})} \le U_{k}^{\Bal(\mu_{1} + \mu_{2})} \quad \text{in } B_{R(n,k)}.\label{eq:projection2}
\end{equation}

\medskip

\textbf{Step 2: The implication ``$\ge$'' of \eqref{eq:projection1}.} Observe that
\[
U_{k}^{\Bal(\mu_{1}+\mu_{2})}-U_{k}^{\mu_{2}}\le U_{k}^{\mu_{1}+\mu_{2}}-U_{k}^{\mu_{2}}=U_{k}^{\mu_{1}} \quad \text{in } B_{R(n,k)}
\]
and 
\[
(\Delta+k^{2})(U_{k}^{\Bal(\mu_{1}+\mu_{2})} - U_{k}^{\mu_{2}}) \ge - 1 + \mu_{2} \ge -1 \quad\text{in }B_{R(n,k)}.
\]
Thus $U_{k}^{\Bal(\mu_1+\mu_{2})}-U_{k}^{\mu_{2}}\in \mathscr{F}(\mu_{1})$.
Since $U_{k}^{\nu_{1}}$ is the largest element in $\mathscr{F}(\mu_{1})$, it holds that
\[
U_{k}^{\Bal(\mu_{1}+\mu_{2})} - U_{k}^{\mu_{2}} \le U_{k}^{\nu_1} \quad \text{in } B_{R(n,k)},
\]
and hence 
\[
U_{k}^{\Bal(\mu_{1}+\mu_{2})} \le U_{k}^{\nu_1} + U_{k}^{\mu_{2}} = U_{k}^{\nu_1+\mu_{2}} \quad \text{in } B_{R(n,k)}.
\]
Furthermore,
\[
(\Delta+k^{2})U_{k}^{\Bal(\mu_{1}+\mu_{2})} \ge -1 \quad \text{in } B_{R(n,k)}. 
\]
Thus $U_{k}^{\Bal(\mu_1+\mu_{2})}\in \mathscr{F}(\nu_{1}+\mu_2)$.
Since $U_{k}^{\Bal(\nu_1+\mu_{2})}$ is the largest element in $\mathscr{F}(\nu_1+\mu_{2})$, it follows that
\begin{equation}
U_{k}^{\Bal(\nu_1+\mu_{2})} \ge U_{k}^{\Bal(\mu_{1}+\mu_{2})} \quad \text{in } B_{R(n,k)}.\label{eq:projection3}
\end{equation}

\medskip

\textbf{Step 3: Conclusion.} Combining \eqref{eq:projection2} and \eqref{eq:projection3} implies \eqref{eq:projection1} and completes the proof that
\begin{equation*}
	\Bal(\mu_1+\mu_2) = \Bal(\nu_1+\mu_2).
\end{equation*}
In the proof of this equality we established that
\begin{align*}
	U^{\Bal(\mu_1+\mu_2)}_k = U^{\Bal(\nu_1+\mu_2)}_k \leq U^{\nu_1+\mu_2}_k = U^{\nu_1}_k+U^{\mu_2}_k \leq U^{\mu_1}_k + U_k^{\mu_2} = U_k^{\mu_1+\mu_2}\,.
\end{align*}
The first inequality is an equality only in $\omega(\Bal(\mu_1)+\mu_2)^c$ and the second is an equality only in $\omega(\mu_1)^c$. Therefore, the combined inequality, $U^{\Bal(\mu_1+\mu_2)}_k(x)\leq U_k^{\mu_1+\mu_2}(x)$ is an equality only for $x\in \omega(\nu_1+\mu_2)^c\cap \omega(\mu_1)^c = (\omega(\nu_1+\mu_2)\cup \omega(\mu_1))^c$. By definition, $\omega(\mu_1+\mu_2)$ is the set where this inequality is strict so the claim follows. This completes the proof of Proposition~\ref{prop:iteration-balayage-operator}.
\end{proof}

\section{Construction of \texorpdfstring{$k$}{k}-quadrature domains} \label{sec:Construction-partial-balayage}

In this section our aim is to prove the following theorem, which contains the statement of Theorem~{\rm \ref{thm:main2}}. 

\begin{thm}
\label{thm:k-quadrature-balayage} 
Let $\mu$ be a positive measure supported in $B_\epsilon$ for some $\epsilon>0$. There exists a constant $c_{n}>0$ depending only on the dimension such that if 
\begin{equation}
0 < k < \frac{c_{n}}{\mu(\mathbb{R}^n)^{1/n}} \quad \mbox{and}\quad \epsilon < c_n \mu(\mathbb{R}^n)^{1/n}, \label{eq:k-range}
\end{equation}
then there exists an open connected set $D$ with real-analytic boundary satisfying $\overline{D}\subset B_{R(n,k)}$ which is a $k$-quadrature domain for $\mu$. 
Moreover, for each $w\in L^1(D)\cap L^1(\mu)$ satisfying $(\Delta+k^2)w \ge 0$ in $D$ we have
\begin{equation}\label{eq:SL-analogue-repeat}
\int_{D} w(x)\,dx \ge \int w(x)\,d\mu(x). 
\end{equation}
\end{thm}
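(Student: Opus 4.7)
The plan is to realize $D$ as the non-contact set $\omega(\tilde\mu)$ of the partial balayage obstacle problem of \S\ref{sec:Partial-balayage2} applied to a suitable regularization of $\mu$. Specifically, I would replace $\mu$ by $\tilde\mu := \mu \ast h_r$ with $h_r := (c_{n,k,r}^{\rm MVT})^{-1}\chi_{B_r}$ for a small parameter $r>0$ to be chosen. Two features motivate this choice. First, $\tilde\mu \in L^\infty(B_{\epsilon+r})$ with total mass $|B_r|(c_{n,k,r}^{\rm MVT})^{-1}\mu(\mathbb R^n)\approx \mu(\mathbb R^n)$ for small $kr$, so that (after shrinking the constant $c_n$ in~\eqref{eq:k-range}) the hypothesis of Lemma~\ref{lem:non-empty} is satisfied and the full machinery of \S\ref{sec:Partial-balayage2} applies to $\tilde\mu$. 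Second, the choice of the MVT-normalized kernel $h_r$ ensures via Proposition~\ref{prop:MVT-metaharmonic} and Fubini that $\int w\,d\tilde\mu = \int w\,d\mu$ for every $w$ that is metaharmonic in an open neighborhood of $\supp(\mu)+\overline{B_r}$.

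Next, set $\nu := \Bal(\tilde\mu)$, $D := \omega(\tilde\mu)$ as in~\eqref{eq:set-omega}, and $u := U_k^{\tilde\mu}-V_k^{\tilde\mu}$. By Lemma~\ref{lem:basic-properties}, $u\ge 0$, $u\equiv 0$ outside $D$, and $u>0$ on $D$, with $(\Delta+k^2)u = \chi_D - \tilde\mu$. The key quantitative step is to verify $\supp(\mu)+\overline{B_r}\subset D$. To achieve this I would compare with the explicit spherically-symmetric solution obtained by balayaging the idealized concentrated mass $\mu(\mathbb R^n)\delta_0$, whose non-contact set is a ball $B_{R_0}$ with $R_0$ comparable to $\mu(\mathbb R^n)^{1/n}$; under the smallness assumptions~\eqref{eq:k-range} one has $\overline{B_{\epsilon+r}} \subset B_{R_0}$ with a definite margin, and a comparison of the two obstacle problems forces $B_{\epsilon+r}\subset D$ as well. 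Granted this inclusion, Lemma~\ref{lem:observation-new-patch} yields that $D$ is a $k$-quadrature domain for $\tilde\mu$; composing with the MVT identity from the previous paragraph promotes this to the quadrature identity $\int_D w\,dx = \int w\,d\mu$ for every metaharmonic $w\in L^1(D)$.

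For the connectedness and real-analyticity of $\partial D$, I would note that since $\supp(\tilde\mu)\Subset D$ the function $u$ satisfies $(\Delta+k^2)u = 1$, $u\ge 0$, and $u = |\nabla u| = 0$ on $\partial D$ in an open neighborhood of $\partial D$. This is a classical obstacle problem with real-analytic coefficients and data, for which Caffarelli-type free-boundary regularity gives $C^{1,\alpha}$ smoothness at regular points and the Kinderlehrer--Nirenberg partial-hodograph theorem upgrades this to real-analyticity. The smallness constraints in~\eqref{eq:k-range} and the ball comparison from the previous paragraph force $D$ to be a small perturbation of $B_{R_0}$, which rules out singular free-boundary points and simultaneously yields connectedness.

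Finally, for the sub-solution inequality~\eqref{eq:SL-analogue-repeat}, take any nonnegative Radon measure $\sigma$ with $\supp\sigma\subset D$ and let $w_\sigma := -\tilde\Phi_k \ast \sigma$; then $(\Delta+k^2)w_\sigma = \sigma \ge 0$ in $D$. Using Fubini together with the chain of inequalities $V_k^{\tilde\mu} = U_k^\nu \le U_k^{\tilde\mu} \le U_k^\mu$ --- the last inequality being exactly $U_k^{\mu\ast h_r}\le U_k^\mu$ established in the proof of Lemma~\ref{lem:non-empty-patch} --- we obtain
\[
\int_D w_\sigma\,dx - \int w_\sigma\,d\mu \;=\; \int(U_k^\mu - V_k^{\tilde\mu})\,d\sigma \;\ge\; 0.
\]
A Runge-type density result in the cone of sub-solutions (the sub-solution variant of Proposition~\ref{prop_runge_lone} referenced in the paper as Proposition~\ref{prop:prop_runge_lone-subsolution}) then extends this to every $w\in L^1(D)\cap L^1(\mu)$ with $(\Delta+k^2)w\ge 0$ in $D$. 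I expect the main obstacle to be the quantitative ball comparison in the second paragraph, together with the parallel check that free-boundary singularities are excluded in the third paragraph; both require a delicate simultaneous calibration of $k$, $\epsilon$, $r$ against the explicit constants $c_n$ in~\eqref{eq:k-range}.
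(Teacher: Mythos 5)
Your overall strategy (mollify $\mu$ by the MVT-normalized kernel, take $D=\omega(\mu\ast h)$, use the sub-solution Runge result) is the paper's, but the step you yourself flag as the crux is genuinely missing, and the fix you propose does not work as stated. To get $\supp(\mu\ast h_r)\subset\omega(\mu\ast h_r)$ and, just as importantly, the identity $\Bal(\mu\ast h_r)=\chi_D\dm$, you cannot invoke Proposition~\ref{prop:structure-balayage} directly: the mollified measure need not have density $\ge 1$ on its support (its density decays to zero near the edge of $B_{\epsilon+r}$), so the structure theorem alone does not give that the balayage is a characteristic function. Your proposed remedy --- comparing with the balayage of the ``idealized'' measure $\mu(\mathbb{R}^n)\delta_0$ --- rests on an unproven comparison principle: $\mu(\mathbb{R}^n)\delta_0$ is neither dominated by nor dominates $\mu\ast h_r$, the obstacle-problem framework of \S\ref{sec:Partial-balayage2} is only set up for $L^\infty$ densities, and no monotonicity of non-contact sets under ``spreading out at fixed mass'' is available. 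The paper instead splits $\mu\ast h_\delta=\mu_1+\mu_2$ with $\mu_1=\kappa\chi_{B_r}\dm\le\mu\ast h_\delta$, uses Lemma~\ref{lem:Ball} to compute $\Bal(\mu_1)=\chi_{B_{r'}}\dm$ with $r'>\epsilon+\delta$, and then the two-step property of Proposition~\ref{prop:iteration-balayage-operator}, $\Bal(\mu_1+\mu_2)=\Bal(\Bal(\mu_1)+\mu_2)$ and $\omega(\mu_1+\mu_2)=\omega(\mu_1)\cup\omega(\Bal(\mu_1)+\mu_2)$; only the swept measure $\Bal(\mu_1)+\mu_2$ has density $\ge 1$ where needed, which makes Proposition~\ref{prop:structure-balayage} and Lemma~\ref{lem:observation-new-patch} applicable and yields both $B_{r'}\subset\omega$ and $\Bal(\mu\ast h_\delta)=\chi_{\omega}\dm$ after a careful calibration of $r,r',\kappa,\delta$ against $c_n$. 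Without this (or an equivalent substitute) your second paragraph does not go through.

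Two further gaps. For boundary regularity you assert that $D$ is a ``small perturbation of $B_{R_0}$'' and that this rules out singular free-boundary points; no quantitative stability of the free boundary is proved or available here, and this is not how singular points are excluded. The paper runs a moving-plane/reflection argument (using that every $x_0\in\partial\omega$ can be separated from $\supp(\mu\ast h)$ by a cone of hyperplanes) to show $\partial\omega$ is locally a Lipschitz graph, and only then applies Caffarelli and Kinderlehrer--Nirenberg; connectedness comes from the fact that only one component of $\omega$ meets $\supp(\mu\ast h)$, as in Gustafsson. Finally, your last step applies the density result of Proposition~\ref{prop:prop_runge_lone-subsolution} directly against the possibly singular measure $\mu$: but $L^1(D)$-convergence $w_j\to w$ gives no control of $\int w_j\,d\mu$ when $\mu$ is, say, a Dirac mass (and the $C^m(K)$ upgrade \eqref{eq:patch1-approximation} used for solutions is unavailable for sub-solutions), and your one-line computation only treats the generators $-\Psi_k(z-\cdot)$, $z\in D$, not the terms $\pm\partial^\alpha\Psi_k(z-\cdot)$, $z\notin D$, which require the equalities $U_k^D=U_k^\mu$ and $\nabla U_k^D=\nabla U_k^\mu$ outside $D$. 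The paper avoids both issues by first proving the quadrature inequality for the bounded measure $\mu\ast h$ (Corollary~\ref{cor:patch1}) and then transferring it to $\mu$ via Fubini and the sub-mean-value inequality on balls $B_\delta(y)\subset D$, $y\in\supp(\mu)$, using $w\in L^1(\mu)$.
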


\begin{rem} 
As we shall see in the proof the $k$-quadrature domain is constructed as the non-contact set $\omega$ of the partial balayage of a measure obtained by averaging $\mu$ over a small ball. If $\mu$ satisfies the assumptions of Lemma~\ref{lem:non-empty} then $\Bal(\mu)$ is well-defined and the $k$-quadrature domain $D$ we construct is precisely $\omega(\mu)$.

However, our definitions and results concerning $\Bal(\mu)$ and $\omega(\mu)$ are not valid for every $\mu$ as in the statement of the theorem, and as such we need to take some care.
\end{rem}

Before we turn to the proof of Theorem~{\rm \ref{thm:k-quadrature-balayage}} we prove some preliminary results that we will need in our main argument.

The first is a simple lemma concerning the partial balayage of a multiple of Lebesgue measure restricted to a ball. 

\begin{lem} \label{lem:Ball}
Let $0 < r < r' < \frac{1}{2} j_{\frac{n-2}{2},1}k^{-1} = R(n,k)$. Then there exists a positive constant $c_{n}$ {\rm (}depending only on the dimension $n${\rm )} such that if
\begin{equation}
0 < k \le c_{n} \min \bigg\{ \frac{1}{r}, \frac{ J_{\frac{n}{2}}(kr)^{\frac{1}{n}} }{ J_{\frac{n}{2}}(kr')^{\frac{1}{n}} (rr')^{\frac{1}{2}}} \bigg\}, \label{eq:k-bound-patch-special}
\end{equation}
then
\begin{equation}
\Bal\bigg( \frac{c_{n,k,r'}^{\rm MVT}}{c_{n,k,r}^{\rm MVT}}\chi_{B_{r}}\mathsf{m} \bigg) = \chi_{B_{r'}}\mathsf{m} \label{eq:Ball-conclusion1}
\end{equation}
and 
\begin{equation}
\omega\bigg( \frac{c_{n,k,r'}^{\rm MVT}}{c_{n,k,r}^{\rm MVT}}\chi_{B_{r}}\mathsf{m} \bigg) =  
B_{r'}. \label{eq:000-refinement-omega-D-report}
\end{equation}
\end{lem}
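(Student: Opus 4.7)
My plan is to apply Lemma~\ref{lem:observation-new-patch} with $D = B_{r'}$ and $\mu := \tfrac{c_{n,k,r'}^{\rm MVT}}{c_{n,k,r}^{\rm MVT}}\chi_{B_{r}}\mathsf{m}$. This reduces the problem to constructing a distribution $u$ on $B_{R(n,k)}$ satisfying $(\Delta+k^{2})u = \chi_{B_{r'}} - \mu$ in $B_{R(n,k)}$, with $u \equiv 0$ in $B_{R(n,k)}\setminus B_{r'}$ and $u > 0$ in $B_{r'}$. Once such a $u$ is produced, both \eqref{eq:Ball-conclusion1} and \eqref{eq:000-refinement-omega-D-report} follow at once. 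As a preliminary I would verify that~\eqref{eq:k-bound-patch-special} is compatible with the hypotheses of Lemma~\ref{lem:non-empty} for $\mu$: using the identification $c_{n,k,R}^{\rm MVT} = (2\pi)^{n/2}k^{-n/2}R^{n/2}J_{n/2}(kR)$ read off from the example after Proposition~\ref{prop_quadrature_pde_equivalence}, one computes $\mu(\mathbb{R}^{n}) = |B_{r}|(r'/r)^{n/2}J_{n/2}(kr')/J_{n/2}(kr)$, so the second entry of the minimum in~\eqref{eq:k-bound-patch-special} matches exactly the bound $k \leq c_{n}\mu(\mathbb{R}^{n})^{-1/n}$, while $k \leq c_{n}/r$ ensures $\supp(\mu) \subset B_{r} \subset B_{R(n,k)}$.

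To build $u$, I use the explicit radial function $u_{k,R}$ from the example following Proposition~\ref{prop_quadrature_pde_equivalence}, which satisfies $(\Delta+k^{2})u_{k,R} = \chi_{B_{R}} - c_{n,k,R}^{\rm MVT}\delta_{0}$ and $u_{k,R}|_{\mathbb{R}^{n}\setminus\overline{B_{R}}}=0$. Define
\[
u := u_{k,r'} - \frac{c_{n,k,r'}^{\rm MVT}}{c_{n,k,r}^{\rm MVT}}\,u_{k,r}.
\]
The Dirac masses at the origin cancel by the choice of coefficient, so $(\Delta+k^{2})u = \chi_{B_{r'}} - \mu$ in $\mathbb{R}^{n}$. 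By construction $u$ is supported in $\overline{B_{r'}}\subset B_{R(n,k)}$, and since the Bessel-type singularities of $u_{k,r'}$ and $u_{k,r}$ near the origin agree up to the prescribed ratio, $u$ extends to a $C^{1,1}$ function across $0$, while the $C^{1,1}$ matching of $u_{k,R}$ with its zero extension across $\partial B_{R}$ gives the same across $\partial B_{r}$ and $\partial B_{r'}$.

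The main obstacle is proving $u>0$ in $B_{r'}$. In the annulus $B_{r'}\setminus\overline{B_{r}}$ we have $u = u_{k,r'}$ since $u_{k,r}$ and its gradient vanish there; positivity of $u_{k,r'}$ on $(0,r')$ can be established from the explicit representation
\[
u_{k,R}(x) = \frac{1}{k^{2}} + c_{1}(R)|x|^{1-\frac{n}{2}}J_{\frac{n}{2}-1}(k|x|) + c_{2}(R)|x|^{1-\frac{n}{2}}Y_{\frac{n}{2}-1}(k|x|),
\]
using the boundary conditions $u_{k,R}(R) = u_{k,R}'(R)=0$, the blow-up $u_{k,R}(r)\to+\infty$ as $r\to 0^{+}$, and the Wronskian of $J_{\frac{n}{2}-1}$ and $Y_{\frac{n}{2}-1}$ to show that $r\mapsto u_{k,R}(r)$ is strictly decreasing on $(0,R)$ and hence strictly positive. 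In the inner ball $B_{r}$ we have $(\Delta+k^{2})u = 1 - c_{n,k,r'}^{\rm MVT}/c_{n,k,r}^{\rm MVT}$; the Bessel recurrence $\tfrac{d}{dR}[R^{n/2}J_{n/2}(kR)] = kR^{n/2}J_{(n-2)/2}(kR)$, together with the positivity of $J_{(n-2)/2}$ on $(0, j_{\frac{n-2}{2},1})$, shows that $R\mapsto c_{n,k,R}^{\rm MVT}$ is strictly increasing on $(0, j_{\frac{n-2}{2},1}k^{-1})$, so this right-hand side is strictly negative. The hypothesis $kr' < j_{\frac{n-2}{2},1}/2$ gives $k^{2}<j_{\frac{n-2}{2},1}^{2}/r^{2}=\lambda_{1}(B_{r})$, so the minimum principle (Proposition~\ref{prop: Min principle}) applied to $u - \min_{\partial B_{r}}u$ yields $u\geq \min_{\partial B_{r}}u > 0$ in $B_{r}$, the last strict positivity coming from the annular step.

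With positivity of $u$ in $B_{r'}$ in hand, Lemma~\ref{lem:observation-new-patch} immediately produces both $\Bal(\mu) = \chi_{B_{r'}}\mathsf{m}$ and $\omega(\mu) = B_{r'}$. The genuinely delicate point is the monotonicity of $r \mapsto u_{k,R}(r)$ on $(0,R)$: it is transparent for $k=0$ from the explicit formula for $u_{0,R}$, but for $k>0$ one has to work carefully with the Bessel representation, and the constant $c_{n}$ appearing in~\eqref{eq:k-bound-patch-special} must be chosen small enough to keep $kR$ inside the range in which the relevant Bessel quantities retain their signs.
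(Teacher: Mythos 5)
Your proposal is correct and follows the paper's skeleton — reduce to Lemma~\ref{lem:observation-new-patch} with $D=B_{r'}$ after checking that \eqref{eq:k-bound-patch-special} encodes the hypothesis of Lemma~\ref{lem:non-empty} (your computation $\mu(\mathbb{R}^n)^{-1/n}=\dm(B_1)^{-1/n}J_{\frac n2}(kr)^{1/n}J_{\frac n2}(kr')^{-1/n}(rr')^{-1/2}$ is exactly the point of that hypothesis) — but you reach the key positivity statement by a different route. In fact your $u=u_{k,r'}-\kappa u_{k,r}$ coincides with the paper's $u=\kappa U_k^{B_r}-U_k^{B_{r'}}$, $\kappa=c^{\rm MVT}_{n,k,r'}/c^{\rm MVT}_{n,k,r}$, since both solve $(\Delta+k^2)u=\chi_{B_{r'}}-\kappa\chi_{B_r}$ with compact support; the paper then gets $u>0$ in $B_{r'}$ (and $u=0$ outside, with equality characterization) in one stroke from the strict monotonicity of $\rho\mapsto (c^{\rm MVT}_{n,k,\rho})^{-1}\int_{B_\rho(x)}\tilde\Phi_k(x-y)\,dy$ in Proposition~\ref{prop:MVT-metaharmonic}, applied to the supersolution $\tilde\Phi_k(x-\cdot)$. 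Your alternative — explicit Bessel monotonicity of $u_{k,r'}$ on the annulus via the Wronskian (which works: $kr'<\tfrac12 j_{\frac{n-2}2,1}<j_{\frac n2,1}$ keeps the relevant signs), followed by a comparison with the constant $\min_{\partial B_r}u$ in the inner ball where $(\Delta+k^2)u=1-\kappa<0$ — is more computational but valid, and mirrors the Wronskian argument the paper uses in Proposition~\ref{prop:GreenFunction}. One small correction: the comparison step is not Proposition~\ref{prop: Min principle} (which only says the minimum of two supersolutions is a supersolution); you should invoke the generalized maximum principle, Proposition~\ref{prop: Max principle}, applied to $w=\min_{\partial B_r}u-u$ (or to $-u$, using the strong form to upgrade to strict positivity), which is legitimate since $k^2<\lambda_1(B_r)$ follows from $kr<\tfrac12 j_{\frac{n-2}2,1}$. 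With that citation fixed, your argument delivers \eqref{eq:Ball-conclusion1} and \eqref{eq:000-refinement-omega-D-report} exactly as in the paper, via Lemma~\ref{lem:observation-new-patch}.
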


\begin{rem}
Since $t \mapsto t^{\frac{n}{2}} J_{\frac{n}{2}}(t)$ is strictly increasing on $t \in [0,j_{\frac{n-2}{2},1}]$, then we see that 
\begin{equation}
\frac{c_{n,k,r'}^{\rm MVT}}{c_{n,k,r}^{\rm MVT}} = \frac{(r')^{n/2}J_{\frac{n}{2}}(kr')}{r^{n/2}J_{\frac{n}{2}}(kr)} > 1. \label{eq:000-constant-measure}
\end{equation}
Since $t \mapsto t^{-\frac{n}{2}}J_{\frac{n}{2}}(t)$ is a decreasing function on $[0, j_{\frac{n+2}{2},1}]$, we find that
\begin{equation}
\begin{aligned} 
& \quad \Bal\bigg( \frac{c_{n,k,r'}^{\rm MVT}}{c_{n,k,r}^{\rm MVT}}\chi_{B_{r}}\mathsf{m} \bigg) (\mathbb{R}^n) - \frac{c_{n,k,r'}^{\rm MVT}}{c_{n,k,r}^{\rm MVT}}\chi_{B_{r}}\mathsf{m} (\mathbb{R}^n) \\
&= \dm(B_1)((r')^n - \frac{(r')^{n/2}J_{\frac{n}{2}}(kr')}{r^{n/2}J_{\frac{n}{2}}(kr)} r^n) \\
	&= \dm(B_1)(r')^{\frac{n}{2}} k^{-\frac{n}{2}} J_{\frac{n}{2}}(kr')\Bigl(\frac{(kr')^{\frac{n}{2}}}{J_{\frac{n}{2}}(kr')}-\frac{(kr)^{\frac{n}{2}}}{J_{\frac{n}{2}}(kr)}\Bigr)\geq 0. 
\end{aligned}\label{eq: comparison measure Balayage of ball}
\end{equation}
\end{rem}

\begin{proof}[Proof of Lemma~{\rm \ref{lem:Ball}}]
For each $x \in \mathbb{R}^{n}$, we see that the distribution $y \mapsto \tilde{\Phi}_{k}(x-y)$ is in $L_{\rm loc}^{1}(\mathbb{R}^{n})$ and satisfies $(\Delta + k^{2}) \tilde{\Phi}_{k}(x - \cdot) = -\delta_{x} \le 0$ in $\mathbb{R}^{n}$. By applying the MVT in Proposition~{\rm \ref{prop:MVT-metaharmonic}}, we have 
\begin{equation}
\frac{1}{c_{n,k,r}^{\rm MVT}} U_{k}^{B_{r}}(x) = \frac{1}{c_{n,k,r}^{\rm MVT}} \int_{B_{r}} \tilde{\Phi}_{k}(x-y) \,dy \ge \frac{1}{c_{n,k,r'}^{\rm MVT}} \int_{B_{r'}} \tilde{\Phi}_{k}(x-y) \,dy = \frac{1}{c_{n,k,r'}^{\rm MVT}} U_{k}^{B_{r'}}(x) \label{eq:000-MVT-application-old-patch}
\end{equation}
for all $x \in \mathbb{R}^{n}$, and equality holds if and only if $x \in \mathbb{R}^{n} \setminus B_{r'}$. In other words
\[
u
= \frac{c_{n,k,r'}^{\rm MVT}}{c_{n,k,r}^{\rm MVT}}U_{k}^{B_{r}} - U_{k}^{B_{r'}} \in C^{1}(\mathbb{R}^{n})
\]
satisfies 
\begin{equation}
\begin{cases}
(\Delta + k^{2})u = \chi_{B_{r'}} - \frac{c_{n,k,r'}^{\rm MVT}}{c_{n,k,r}^{\rm MVT}} \chi_{B_{r}} &\text{in }\mathbb{R}^{n} \\
u > 0 &\text{in }B_{r'} \\
u = 0 &\text{in }\mathbb{R}^{n}\setminus B_{r'}.
\end{cases} \label{eq:patch1-condition-except-positivity}
\end{equation}
The conclusion of the lemma follows by applying Lemma~\ref{lem:observation-new-patch}.
\end{proof}

The second result we require is an analogue of Proposition~{\rm \ref{prop_runge_lone}} but for sub-solutions of the Helmholtz equation. We again follow the argument in \cite[Lemma 5.1]{Sak84ObstacleGreenObstacles} which considered the case $k=0$.

\begin{prop} \label{prop:prop_runge_lone-subsolution}
Let $k\ge0$, and let $D\subset\mathbb{R}^{n}$ be a bounded open set. Let $\Psi_{k}$ be any fundamental solution of $-(\Delta+k^{2})$ and let $\Omega\supset\overline{D}$ be any open set in $\mathbb{R}^{n}$. Then the linear span \emph{with positive coefficients} of
\[
F=\{\pm\partial^{\alpha}\Psi_{k}(z-\cdot)|_{D}:z\in\Omega\setminus D,|\alpha|\le1\} \cup \{ -\Psi_{k}(z-\cdot)|_{D}:z\in D\}
\]
is dense in 
\[
S_k L^1(D) =\{w\in L^{1}(D):(\Delta+k^{2})w \ge 0\text{ in }D\}
\]
with respect to the $L^{1}(D)$ topology. 
\end{prop}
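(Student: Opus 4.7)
The plan is to adapt the Hahn--Banach argument of Proposition~\ref{prop_runge_lone}, replacing the version for subspaces by the separation theorem for closed convex cones. Let $C \subset L^{1}(D)$ be the set of positive linear combinations of elements of $F$; this is a convex cone containing $0$. To show $S_{k}L^{1}(D) \subset \overline{C}$ it suffices, by separating a point from a closed convex cone in the Banach space $L^{1}(D)$, to verify the following: whenever $f \in L^{\infty}(D) = L^{1}(D)^{*}$ satisfies $\int_{D} fg\,dx \ge 0$ for every $g \in C$, it also satisfies $\int_{D} fw\,dx \ge 0$ for every $w \in S_{k}L^{1}(D)$.

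Fix such an $f$. Testing against $\pm\partial^{\alpha}\Psi_{k}(z-\cdot)$ with $z \in \Omega\setminus D$, $|\alpha|\le 1$, forces $\int_{D}f(x)\partial^{\alpha}\Psi_{k}(z-x)\,dx = 0$, while testing against $-\Psi_{k}(z-\cdot)$ with $z \in D$ gives $\int_{D}f(x)\Psi_{k}(z-x)\,dx \le 0$. Introducing $u := -\Psi_{k}\ast (f\chi_{D})$ and zero-extending outside a neighborhood $\Omega$ of $\overline{D}$, exactly as in Proposition~\ref{prop_runge_lone} one finds $u \in C^{1,\alpha}(\mathbb{R}^{n})$ for every $\alpha < 1$, $(\Delta+k^{2})u = f\chi_{D}$ in $\mathbb{R}^{n}$, and $u = |\nabla u| = 0$ on $\mathbb{R}^{n}\setminus D$. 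The essential new piece of information is that $u \ge 0$ throughout $D$, by symmetry of $\Psi_{k}$ and the second half of the conditions on $f$.

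For $w \in S_{k}L^{1}(D)$ the distribution $\nu := (\Delta+k^{2})w$ is a nonnegative Radon measure on $D$. Reusing the Ahlfors--Bers mollifiers $\omega_{j}$ from Proposition~\ref{prop_runge_lone}, write
\[
\int_{D}((\Delta+k^{2})u)\,w\,dx = \lim_{j\to\infty}\int_{D}\bigl[(\Delta+k^{2})(\omega_{j}u) - 2\nabla\omega_{j}\cdot\nabla u - (\Delta \omega_{j})u\bigr]w\,dx.
\]
The last two terms vanish in the limit by the log-Lipschitz estimates on $u,\nabla u$ near $\partial D$ together with the pointwise bounds on $\omega_{j}$, exactly as in the earlier proof. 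For the principal term, $\omega_{j}u \ge 0$ is compactly supported in $D$ and lies in $W^{2,p}_{0}(D)$ for every $p<\infty$ (since $\Delta u = f\chi_D - k^{2}u \in L^{\infty}$); approximating it by $(\omega_{j}u)\ast\eta_{\varepsilon} \in C^{\infty}_{c}(D)$ and passing to the limit $\varepsilon\to 0$ in the distributional definition of $\nu$ yields
\[
\int_{D}((\Delta+k^{2})(\omega_{j}u))\,w\,dx = \int_{D}\omega_{j}u\,d\nu \ge 0,
\]
because $\omega_{j}, u$, and $\nu$ are all nonnegative. Sending $j\to\infty$ gives $\int_{D}fw\,dx \ge 0$ as required.

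The main obstacle is the penultimate identity above. The subtlety is that $w$ is only $L^{1}(D)$ and $\nu$ is merely a locally finite nonnegative measure (possibly of infinite total mass on $D$), so the pairing between $\nu$ and $\omega_{j}u$ must be justified carefully. The compact support of $\omega_{j}u$ strictly inside $D$, where $\nu$ is necessarily finite, is what makes the interior mollification argument converge; the remaining steps follow the proof of Proposition~\ref{prop_runge_lone} line by line.
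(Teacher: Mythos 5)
Your argument is correct and follows the paper's structure: the cone version of Hahn--Banach, representation of the functional by $f\in L^\infty(D)$, the potential $u=-\Psi_k\ast(f\chi_D)$ with $u=|\nabla u|=0$ off $D$ and the crucial extra positivity $u\ge 0$ in $D$ coming from the generators $-\Psi_k(z-\cdot)$, $z\in D$, and the Ahlfors--Bers cutoffs $\omega_j$ to handle the boundary terms. The one place you deviate is the principal term: the paper keeps the pairing $\int_D(\Delta+k^2)(\omega_j u)\,w\,dx$ and regularizes the \emph{sub-solution}, replacing $w$ by $w\ast\psi_\eps$, which is smooth near $\supp(\omega_j)$, still satisfies $(\Delta+k^2)(w\ast\psi_\eps)\ge 0$ there, and converges to $w$ in $L^1$ on that compact set; you instead keep $w$, use that the nonnegative distribution $(\Delta+k^2)w$ is a locally finite nonnegative Radon measure $\nu$, and regularize the \emph{test function} $\omega_j u$ to justify $\int_D(\Delta+k^2)(\omega_j u)\,w\,dx=\int_D\omega_j u\,d\nu\ge 0$. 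Both routes are sound and rest on the same positivity inputs ($\omega_j u\ge 0$, sign of $(\Delta+k^2)w$); yours trades the verification that mollification preserves the sub-solution property for the Riesz representation of nonnegative distributions plus the two limit interchanges you indicate (uniform convergence of $(\omega_j u)\ast\eta_\varepsilon$ on a fixed compact where $\nu$ is finite, and dominated convergence against $w\in L^1$ since $(\Delta+k^2)(\omega_j u)\in L^\infty$), which indeed go through. One small correction: no symmetry of $\Psi_k$ is needed (nor available for a general fundamental solution) to get $u\ge 0$ in $D$; the condition $\ell(-\Psi_k(z-\cdot)|_D)\ge 0$ is literally the statement $u(z)=-\int_D\Psi_k(z-x)f(x)\,dx\ge 0$ for $z\in D$.
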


\begin{proof}
We first show that if any bounded linear functional $\ell$ on $L^{1}(D)$ with $\ell|_{F}\ge 0$ also satisfies  
\begin{equation}
\ell|_{S_k L^1(D)}\ge0, \label{eq:HB-assump}
\end{equation}
then we have $\overline{G} = S_{k}L^{1}(D)$, where $G$ is the linear span with positive coefficients of $F$. Suppose to the contrary that there exists $f_{0} \in S_{k}L^{1}(D) \setminus \overline{G}$. Using the Hahn-Banach theorem (second geometric form, see e.g.\  \cite[Theorem~1.7]{Bre11PDE}), there exists a closed hyperplane $\{ \ell_{0} = \alpha \}$ that strictly separates the closed set $\overline{G}$ and the compact set $\{f_{0}\}$, thus we have 
\begin{equation}\label{eq:Hahn-Banach separation}
\ell_{0}(f_{0}) < \alpha < \ell_{0}(f) \quad \text{for all } f \in G.
\end{equation} 
Since $\lambda \ell_{0}(f) = \ell_{0}(\lambda f) > \alpha$ for all $\lambda > 0$ and each fixed $f \in G$, we deduce that $\ell_{0}(f) \ge 0$ for all $f \in G$ and that $\alpha \leq 0$. By \eqref{eq:HB-assump} and since $f_{0} \in S_{k}L^{1}(D)$ we know that $\ell_{0}(f_{0}) \ge 0$. Combining this with \eqref{eq:Hahn-Banach separation} and the fact that $\alpha \leq 0$ gives a contradiction.

\medskip 

Now let $\ell$ be a bounded linear functional on $L^1(D)$ with $\ell|_F \geq 0$. We need to prove that $\ell|_{S_k L^1(D)}\ge0$. Since the dual of $L^{1}(D)$ is $L^{\infty}(D)$, there is a function $f\in L^{\infty}(D)$ with 
\[
\ell(w)=\int_{D}fw\,dx,\quad w\in L^{1}(D).
\]
We extend $f$ by zero to $\mathbb{R}^{n}$ and consider the function
\[
u(z)=-(\Psi_{k}*f)(z)\quad\text{for all }z\in\Omega.
\]
By the assumption $\ell|_{F}\ge0$, the function $u$ satisfies 
\[
\begin{cases}
(\Delta+k^{2})u=f & \text{in }\Omega,\\
u=|\nabla u|=0 & \text{in }\Omega\setminus D,\\
u \ge 0 & \text{in }D.
\end{cases}
\]

\medskip 

Our aim is to employ the same argument as in the proof of \eqref{runge_ibp} to show that 
\[
\int_{D}((\Delta+k^{2})u)w\,dx\ge0, \quad \text{for all }w\in S_k L^1(D),
\]
which implies $\ell|_{S_k L^1(D)}\ge0$. However, in order to carry out the the integration by parts which concluded that argument we used that solutions of the Helmholtz equation are smooth in the interior of $D$, this is not necessarily the case for sub-solutions. To circumvent this issue we use a classical mollification argument. Fix a non-negative $\psi \in C_c^\infty(\mathbb{R}^n)$ with support in $B_1$ and $\|\psi\|_{L^1(\mathbb{R}^n)}=1$. For $\eps>0$ set $\psi_\eps(x) = \eps^{-n}\psi(x/\eps)$. For $w\in S_kL^1(D)$ set $w_\eps = w*\psi_\eps$, which is well defined and $C^{\infty}$ near any compact subset $K$ of $D$ if $\eps<\dist(K, D^c)$. Then $w_\eps \to w$ in $L^1(K)$ as $\eps \to 0^+$. We claim that $(\Delta+k^2)w_\eps(x)\geq 0$ in $K$ for $\eps<\dist(K, D^c)$. Indeed, since $(\Delta+k^2)w(x)\geq 0$ in $D$ this is in particular the case in $B_\eps(y)$ for any $y\in K$. The claim follows by differentiating under the integral sign and using the non-negativity of $\psi$.
With this approximation in hand the argument can be completed as in the proof of \eqref{runge_ibp} by appealing to the $L^1$ convergence of $w_\eps$ to $w$ in the support of the cutoff function $\omega_j$, and applying the integration by parts argument with $w$ replaced by $w_\eps$.
\end{proof}

Finally we need the following result which is in the spirit of Proposition~{\rm \ref{prop_quadrature_pde_equivalence}}. This result can be interpreted as saying that $D$ is a quadrature domain for sub-solutions $w$ satisfying $(\Delta + k^2) w \geq 0$ in $D$ (i.e.\ $D$ is a quadrature domain for metasubharmonic functions).

\begin{cor}
\label{cor:patch1} Let $k>0$, and let $D, \Omega \subset\mathbb{R}^{n}$ be bounded open sets such that $\overline{D}\subset \Omega$, and let $\mu\in L^\infty(D)$ be a non-negative measure with $\supp(\mu)\subset D$. If 
\begin{subequations}
\begin{align}
U_{k}^{D} & =U_{k}^{\mu}\quad\text{in } \Omega\setminus D,\label{eq:patch1-equation1a}\\
U_{k}^{D} & \le U_{k}^{\mu}\quad\text{in }\Omega,\label{eq:patch1-equation1b}
\end{align}
\end{subequations}
then for each $w\in L^{1}(D)$ satisfying $(\Delta+k^{2})w\ge0$ in $D$ we know that 
\begin{equation}
\int_{D}w(x)\,dx \ge \int w(x) \,d\mu(x) .\label{eq:patch1-goal}
\end{equation}
\end{cor}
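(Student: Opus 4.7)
The plan is to mirror the Runge-type argument used in the proof of Proposition~\ref{prop_quadrature_pde_equivalence}, but with the sub-solution approximation result Proposition~\ref{prop:prop_runge_lone-subsolution} in place of Proposition~\ref{prop_runge_lone}. The strategy is to first verify that $\int_D f\,dx \geq \int f\,d\mu$ holds for every element $f$ of the generating family
\[
F = \{\pm\partial^\alpha \tilde{\Phi}_k(z-\cdot)|_D : z \in \Omega\setminus D,\; |\alpha|\leq 1\} \cup \{-\tilde{\Phi}_k(z-\cdot)|_D : z \in D\},
\]
then extend the inequality to every positive linear combination of elements of $F$ by linearity, and finally pass to the limit along an $L^1(D)$-approximating sequence for $w$.

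For fixed $z \in \Omega \setminus D$ and $|\alpha|\leq 1$, differentiation under the integral sign gives
\[
\int_D \partial^\alpha \tilde{\Phi}_k(z-y)\,dy = (-1)^{|\alpha|}\partial_z^\alpha U_k^D(z), \qquad \int \partial^\alpha \tilde{\Phi}_k(z-y)\,d\mu(y) = (-1)^{|\alpha|}\partial_z^\alpha U_k^\mu(z).
\]
Since $\Omega\setminus D$ is open and both $U_k^D$ and $U_k^\mu$ lie in $C^{1,\alpha}_{\rm loc}$ by standard elliptic regularity applied to $\chi_D, \mu \in L^\infty_c$, the hypothesis \eqref{eq:patch1-equation1a} yields equality of the two integrals at every such $z$; in particular the sign $\pm$ is irrelevant. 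For $z \in D$ one has directly
\[
\int_D (-\tilde{\Phi}_k(z-y))\,dy - \int (-\tilde{\Phi}_k(z-y))\,d\mu(y) = U_k^\mu(z) - U_k^D(z) \geq 0
\]
by \eqref{eq:patch1-equation1b}. Hence the required inequality holds for every $f \in F$, and by positivity of the coefficients it persists under positive linear combinations.

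Given $w \in L^1(D)$ with $(\Delta+k^2)w \geq 0$ in $D$, Proposition~\ref{prop:prop_runge_lone-subsolution} supplies a sequence $(w_j)$ of positive linear combinations of elements of $F$ with $w_j \to w$ in $L^1(D)$, and the preceding step gives $\int_D w_j\,dx \geq \int w_j\,d\mu$ for every $j$. The left-hand side converges to $\int_D w\,dx$ by $L^1$-convergence, and writing $d\mu = g\,d\mathsf{m}$ with $g \in L^\infty(D)$, the right-hand side equals $\int_D w_j g\,dx$, which converges to $\int_D wg\,dx = \int w\,d\mu$ by $L^1$--$L^\infty$ duality. This yields \eqref{eq:patch1-goal}. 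The main subtlety is the sign discipline in the first step: the $\pm\partial^\alpha \tilde{\Phi}_k$ terms produce exact equalities (so the $\pm$ is harmless), while the genuine inequality arises only from the $-\tilde{\Phi}_k(z-\cdot)$ terms with $z \in D$, so it is crucial that the approximation in Proposition~\ref{prop:prop_runge_lone-subsolution} uses \emph{positive} coefficients precisely on those terms.
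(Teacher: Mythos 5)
Your overall strategy is the same as the paper's: verify the quadrature inequality on the generating family of Proposition~\ref{prop:prop_runge_lone-subsolution}, extend by positive linear combinations, and pass to the limit in $L^1(D)$ using $\mu\in L^\infty(D)$ (your duality step at the end is exactly the paper's H\"older estimate). However, there is a genuine gap in your first step. You justify the equality of the first-order terms by asserting that ``$\Omega\setminus D$ is open'', so that \eqref{eq:patch1-equation1a} can be differentiated. This is false: since $D$ is open and $\overline{D}\subset\Omega$, the set $\Omega\setminus D$ contains $\partial D$, and at a point $z\in\partial D$ the identity $U_k^D=U_k^\mu$ on $\Omega\setminus D$ does \emph{not} by itself imply $\nabla U_k^D(z)=\nabla U_k^\mu(z)$. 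One cannot dismiss these points: the family in Proposition~\ref{prop:prop_runge_lone-subsolution} explicitly includes $\pm\partial^\alpha\tilde{\Phi}_k(z-\cdot)$ with $|\alpha|=1$ and $z\in\Omega\setminus D$ (boundary points included), and these terms are needed precisely for irregular domains such as the slit disk, where points of $\partial D$ need not be approximable from $\Omega\setminus\overline{D}$, so a continuity argument from the exterior open set does not close the gap either.

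The missing ingredient, which is the one nontrivial point of the corollary's proof in the paper, is to use \emph{both} hypotheses together with regularity: by Calder\'on--Zygmund estimates $U_k^\mu, U_k^D\in C^1(\Omega)$, and by \eqref{eq:patch1-equation1a}--\eqref{eq:patch1-equation1b} the difference $U_k^\mu-U_k^D\ge 0$ in $\Omega$ vanishes on $\Omega\setminus D$; hence every point of $\Omega\setminus D$ (in particular every point of $\partial D$) is an interior minimum point in $\Omega$, so $\nabla U_k^\mu=\nabla U_k^D$ there. With this correction your verification of the family inequalities is complete, and the remainder of your argument (positivity of the coefficients being essential only for the $-\tilde{\Phi}_k(z-\cdot)$, $z\in D$, terms, and the $L^1$--$L^\infty$ limit passage) is correct and coincides with the paper's proof.
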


\begin{proof}
By equations~\eqref{eq:patch1-equation1a} and~\eqref{eq:patch1-equation1b} $U_k^\mu-U_k^{D}\geq 0$ with equality in $\Omega \setminus D$. By Calder\'{o}n--Zygmund estimates $U_k^\mu, U_k^{D}\in C^1(\Omega)$. Since $U_k^\mu-U_k^D$ attains its minimum in $\Omega \setminus D$ it holds that $\nabla U_k^\mu = \nabla U_k^D$ in $\Omega \setminus D$. When combined with~\eqref{eq:patch1-equation1a} and~\eqref{eq:patch1-equation1b} we conclude that 
\begin{subequations}
\begin{align}
\int_{D}\partial^{\alpha}\tilde{\Phi}_{k}(z-x)\,dx & =\int\partial^{\alpha}\tilde{\Phi}_{k}(z-x)\,d\mu(x)\quad\text{for all }z\in B_{R(n,k)}\setminus D,|\alpha|\le1,\label{eq:patch1-equation2a}\\
\int_{D}\tilde{\Phi}_{k}(z-x)\,dx & \le\int\tilde{\Phi}_{k}(z-x)\,d\mu(x)\quad\text{for all }z\in D.\label{eq:patch1-equation2b}
\end{align}
\end{subequations}

Let $w$ be the function as in the statement of the lemma, and use Proposition~{\rm \ref{prop:prop_runge_lone-subsolution}} to find a sequence
\[
w_{j}\in{\rm span}_{+}\begin{pmatrix}\{\pm\partial^{\alpha}\tilde{\Phi}_{k}(z-\cdot)|_{D}:z\in B_{R(n,k)}\setminus D,|\alpha|\le1\}\\
\cup\{-\tilde{\Phi}_{k}(z-\cdot)|_{D}: z\in D \}
\end{pmatrix}
\]
with $w_{j}\rightarrow w\in L^{1}(D)$. From \eqref{eq:patch1-equation2a} and \eqref{eq:patch1-equation2b}, we know that 
\begin{equation}
\int_D w_{j}(x)\,dx\ge \int w_{j}(x)\,d\mu(x) \quad\text{for all }j.\label{eq:patch1-equation3}
\end{equation}
Since $\mu \in L^{\infty}(D)$ H\"older's inequality implies that
\[
\Bigl|\int (w_{j}(x)-w(x))\,d\mu(x) \Bigr| \le \| \mu\|_{L^{\infty}(D)} \| w - w_{j} \|_{L^{1}(D)}.
\]
Taking the limit $j \rightarrow \infty$ in \eqref{eq:patch1-equation3} we therefore arrive at
\begin{equation*}
\int_{D}w(x)\,dx\ge \int w(x)\,d\mu(x).
\end{equation*}
This is the desired inequality~\eqref{eq:patch1-goal}, and thus completes the proof.
\end{proof}

We are now ready to prove Theorem~{\rm \ref{thm:k-quadrature-balayage}}.

\begin{proof} [Proof of Theorem~{\rm \ref{thm:k-quadrature-balayage}}]
\textbf{Step 1: Constructing the $k$-quadrature domain.} 
For $\epsilon<\delta < R(n, k)$ to be chosen set
\begin{equation}
h_\delta=\frac{1}{c_{n,k,\delta}^{{\rm MVT}}}\chi_{B_{\delta}}.\label{eq:auxiliary-h-convolution}
\end{equation}
Then
\begin{equation}
\mu*h_\delta(x)=\frac{\mu(B_{\delta}(x))}{c_{n,k,\delta}^{{\rm MVT}}}
\end{equation}
is non-negative and supported in $B_{\epsilon+\delta}$. Furthermore for all $x\in B_{\delta-\epsilon}$,
\begin{equation}
\mu*h_\delta(x)=\frac{\mu(B_{\delta}(x))}{c_{n,k,\delta}^{{\rm MVT}}}
= \frac{\mu(\mathbb{R}^n)}{c_{n,k,\delta}^{{\rm MVT}}}. \label{eq:concolution-restriction-in-small-ball}
\end{equation}

Let us for $\kappa \ge 1$ and $r>0$ define
\begin{equation*}
\mu_{\kappa, r} := \kappa \chi_{B_{r}}\mathsf{m}.
\end{equation*}
Set
\[
\mu_{1}:= \mu_{\kappa, r} \quad \text{and} \quad \mu_{2} := \mu*h_\delta - \mu_{\kappa, r}
\]
with 
\begin{equation}
1< \kappa \le \frac{\mu(\mathbb{R}^n)}{c_{n,k,\delta}^{{\rm MVT}}} \quad \mbox{and}\quad  0<r\leq \delta-\epsilon \label{eq:concentration-measure}
\end{equation}
to be chosen. 
Note that these choices of $r,\kappa$ imply that the measures $\mu_1, \mu_2$ are non-negative. Furthermore, both measures have bounded densities with respect to Lebesgue measure and
\begin{equation*}
\mu_1(\mathbb{R}^n) = \kappa \dm(B_r) \quad \mbox{and} \quad \mu_2(\mathbb{R}^n) = \frac{\mu(\mathbb{R}^n)\dm(B_\delta)}{c_{n,k,\delta}^{\rm MVT}} - \kappa \dm(B_r). 
\end{equation*}

Our aim is to appeal to Proposition~\ref{prop:iteration-balayage-operator} and perform an initial balayage of $\mu_1$ by utilizing Lemma~\ref{lem:Ball}. To this end we choose
\begin{equation*}
	\kappa = \frac{c_{n,k,r'}^{\rm MVT}}{c_{n,k,r}^{\rm MVT}} = \frac{(r')^{\frac{n}2}J_{\frac{n}2}(k r')}{r^{\frac{n}2}J_{\frac{n}2}(k r)}\,.
\end{equation*}
for some $r<r'< R(n,k)$. By Lemma~\ref{lem:Ball}, if
\begin{equation}\label{eq: initial balayage k bound}
	0<k <c_n \min\Bigl\{ \frac{1}{r}, \frac{J_{\frac{n}2}(k r)^{\frac{1}n}}{J_{\frac{n}2}(k r')^{\frac{1}n}(r r')^{\frac{1}2}}\Bigr\} = c_n \min\Bigl\{ \frac{1}{r}, \frac{1}{\kappa^{\frac{1}n}r}\Bigr\}  = \frac{c_n}{r}\,,
\end{equation}
then $\Bal(\mu_1)= \chi_{B_{r'}}\dm$ and by~\eqref{eq: comparison measure Balayage of ball} it holds that $\Bal(\mu_1)(\mathbb{R}^n) \geq \mu_1(\mathbb{R}^n) = \kappa |B_r|$.

Consequently, $(\mu_1+\mu_2)(\mathbb{R}^n)\leq (\Bal(\mu_1)+\mu_2)(\mathbb{R}^n)$ and thus if
\begin{equation}
0 <k < c_n \min\Bigl\{\frac{1}{r'}, \frac{1}{\delta +\epsilon}, 
\frac{1}{((c_{n,k,\delta}^{\rm MVT})^{-1}\mu(\mathbb{R}^n)\delta^n + 
(r')^n-\kappa r^n)^{1/n}}
\Bigr\},
\label{eq: mollification k bound}  
\end{equation}
then \eqref{eq: initial balayage k bound} is valid, since $r'>r$, and furthermore Proposition~{\rm \ref{prop:iteration-balayage-operator}} implies that
\begin{equation}\label{eq: balayage convolution}
\Bal(\mu*h_\delta)=\Bal(\mu_{1}+\mu_{2})=\Bal(\Bal(\mu_{1})+\mu_{2})=\Bal(\mu_{1,r'}+\mu*h_\delta-\mu_{\kappa, r})
\end{equation}
and
\begin{equation}\label{eq: ball in non-contact set}
	\omega(\mu*h_\delta) = \omega(\mu_1)\cup \omega(\Bal(\mu_1)+\mu_2) \supset B_{r'}\,,
\end{equation}
where we also used that $\omega(\mu_1)=B_{r'}$ by Lemma~\ref{lem:Ball}.

By construction
\begin{equation}
\mu_{1, r'} + \mu*h_\delta - \mu_{\kappa, r} = 0 \quad \text{outside } B_{r'}\cup B_{\epsilon+\delta} \label{eq:support-2-step-measure}
\end{equation}
and
\begin{equation}
\mu*h_\delta - \mu_{\kappa, r} \ge 0. \label{eq:to-verify1} 
\end{equation}
Therefore,
\begin{equation}
\mu_{1,r'}+\mu*h_\delta-\mu_{\kappa, r} \ge 1 \quad \text{in }B_{r'}. \label{eq:mass-2-step-measure}
\end{equation}
Consequently, if we choose our parameters to satisfy
\begin{equation}
r'> \epsilon + \delta, \label{eq:to-verify2} 
\end{equation}
then the fact that $\mu*h_{\delta} \in L^{\infty}(\mathbb{R}^{n})$ combined with Proposition~{\rm \ref{prop:structure-balayage}} implies
\begin{equation}
\Bal(\mu_{1,r'}+\mu*h_\delta-\mu_{\kappa,r}) = \chi_{D(\mu_{1,r'}+\mu*h_\delta-\mu_{\kappa,r})}\dm, \label{eq:convolution-balayage-special-form}
\end{equation}
and $B_{r'}\subset D(\mu_{1,r'}+\mu*h_\delta-\mu_{\kappa,r})$.

Combining \eqref{eq: balayage convolution} and \eqref{eq:convolution-balayage-special-form}, we have
\begin{equation*}
\Bal(\mu*h_\delta) = \Bal(\mu_{1,r'}+\mu*h_\delta-\mu_{\kappa, r}) = \chi_{D(\mu_{1,r'}+\mu*h_\delta-\mu_{\kappa, r})}\dm.
\end{equation*}
Using~\eqref{eq: ball in non-contact set} and that we shall choose our parameters so that $r'>\eps+\delta$ we find
$$\supp(\mu*h_\delta) \subset B_{r'} \subset \omega(\mu*h_\delta),$$ therefore Lemma~\ref{lem:observation-new-patch} implies that
\footnote{Note that $\mu*h_\delta$ does not necessarily have a density which is greater than $1$ on its support and so the structure of its partial balayage in \eqref{eq:balayage-domain-relation} does not follow directly from Proposition~{\rm \ref{prop:structure-balayage}}.}
\begin{equation}
\Bal(\mu*h_\delta) = \chi_{\omega(\mu*h_\delta)}\dm. 
\label{eq:balayage-domain-relation}
\end{equation}
Using that only one connected component of $\omega(\mu*h_\delta)$ intersects $\supp(\mu*h_\delta)$, we can argue as in \cite[Corollary 2.3]{Gus90QuadratureDomains} to prove that $\omega(\mu*h_\delta)$ is connected.

By Lemma~{\rm \ref{lem:equivalent-saturated-set1}} and the definition of $\omega(\mu*h_\delta)$, 
\begin{subequations}
\begin{align}
U_{k}^{\omega(\mu*h_\delta)} & =U_{k}^{\mu*h_\delta}=\tilde{\Phi}_{k}*\mu*h_\delta\quad\text{in}\;\;B_{R(n,k)}\setminus \omega(\mu*h_\delta),\mbox{ and} \label{eq:sub-k-quadrature-convolution-a}\\
U_{k}^{\omega(\mu*h_\delta)} & < U_{k}^{\mu*h_\delta}=\tilde{\Phi}_{k}*\mu*h_\delta\quad\text{in}\;\;\omega(\mu*h_\delta).\label{eq:sub-k-quadrature-convolution-b}
\end{align}
\end{subequations}
Under our assumptions, Corollary~{\rm \ref{cor:patch1}} implies that $\omega(\mu*h_\delta)$ is a $k$-quadrature domain for $\mu*h_{\delta}$ and furthermore we have the quadrature inequality for sub-solutions.

The MVT (Proposition~{\rm \ref{prop:MVT-metaharmonic}}) implies that $\tilde{\Phi}_{k}*h_\delta(y)\leq \tilde{\Phi}_{k}(y)$ for all $y \in \mathbb{R}^n$ and equality holds if $|y|\geq\delta$. Therefore, by the non-negativity of $\mu$,
\begin{align*}
   U_k^{\mu*h_\delta}(x) = \tilde \Phi_k*\mu * h_\delta(x)
    = \int (\tilde \Phi_k*h_\delta)(x-y)d\mu(y) \leq \tilde \Phi_k * \mu(x) = U^\mu_k(x)
\end{align*}
with equality if $\dist(x, \supp(\mu))\geq\delta$. In particular, since under our assumptions $$\{x\in \mathbb{R}^n: \dist(x, \supp(\mu))< \delta\} \subset B_{\eps+\delta} \subset B_{r'}\subset \omega(\mu*h_\delta)$$ we have equality for $x \in B_{R(n, k)}\setminus \omega(\mu*h_{\delta})$. We have thus arrived at
\footnote{
If $\mu \in L^{\infty}(B_{R(n,k)})$ then \eqref{eq:sub-k-quadrature-a} and \eqref{eq:sub-k-quadrature-b} combined with an application of Lemma~\ref{lem:observation-new-patch} implies that
\begin{equation*}
\omega(\mu)=\omega(\mu*h_\delta)  \quad \mbox{and} \quad \Bal(\mu) =\Bal(\mu*h_\delta) =\chi_{\omega(\mu)}\mathsf{m}. 
\end{equation*}}
\begin{subequations}
\begin{align}
U_{k}^{D(\mu*h_{\delta})} & =U_{k}^{\mu}\quad\text{in } B_{R(n,k)}\setminus \omega(\mu*h_\delta),\mbox{ and} \label{eq:sub-k-quadrature-a}\\
U_{k}^{D(\mu*h_{\delta})} & < U_{k}^{\mu}\quad\text{in } \omega(\mu*h_\delta). \label{eq:sub-k-quadrature-b}
\end{align}
\end{subequations}

We now show that we can choose the parameters $r, \delta, \kappa$ appropriately only depending on the measure $\mu$, specifically we shall choose them depending on $\epsilon, \mu(\mathbb{R}^n)$. We choose $\delta = 2\epsilon$ and $r=\epsilon$, and let $\gamma = k \epsilon$.

Since $t \mapsto t^{-\frac{n}{2}} J_{\frac{n}{2}}(t)$ is a decreasing function on $[0,j_{\frac{n-2}{2},1}]$ satisfying that $\lim_{y\rightarrow 0_{+}} y^{-\frac{n}{2}} J_{\frac{n}{2}}(y) = \frac{2^{-\frac{n}{2}}}{\Gamma(1 + \frac{n}{2})}$, we by using the explicit form of $c_{n,k,r}^{\rm MVT}$ find that 
\begin{equation}
(c_{n,k,2\epsilon}^{\rm MVT})^{-1}(2\epsilon)^{n} = \frac{\gamma^{\frac{n}{2}}}{\pi^{\frac{n}{2}} J_{\frac{n}{2}}(2\gamma)} \ge \frac{\Gamma(1 + \frac{n}{2})}{\pi^{\frac{n}{2}}}. \label{eq:cMVT-uniform-bound}
\end{equation}
The required bound on $k$~\eqref{eq: mollification k bound} is then valid if
\begin{equation}
	0 <k < c_n \min\Bigl\{\frac{1}{r'}, \frac{1}{\epsilon}, 
	\frac{1}{(\pi^{-\frac{n}{2}} \Gamma(1 + \frac{n}{2}) \mu(\mathbb{R}^n) + (r')^n)^{1/n}} \Bigr\}. \label{eq:mollification-k-bound-1}
\end{equation}

Assume that $c_{n} \le j_{\frac{n-2}{2},1}/4$ so that $0 \le \gamma \le j_{\frac{n-2}{2},1}/4$. Then, since $t \mapsto t^{\frac{n}{2}}J_{\frac{n}{2}}(t)$ is strictly increasing on $[0, j_{\frac{n-2}{2},1}]$ we can choose $r'=4r$ which ensures that \eqref{eq:to-verify2} is satisfied since
\begin{equation*}
	r'=4r=4\eps>3\eps=\eps+\delta,
\end{equation*}
and we have
\begin{equation*}
\kappa = \frac{c_{n,k,4r}^{\rm MVT}}{c_{n,k,r}^{\rm MVT}} = \frac{ (4r)^{\frac{n}{2}} J_{\frac{n}{2}}(4\gamma) }{ r^{\frac{n}{2}} J_{\frac{n}{2}}(\gamma) } = \frac{2^{n}J_{\frac{n}{2}}(4\gamma)}{J_{\frac{n}{2}}(\gamma)}>1.
\end{equation*}

We now want to find a sufficient condition so that \eqref{eq:concentration-measure} holds. By the choice of $\kappa$ and the definition of $\gamma$ what we need to verify is the inequality
\begin{equation}
\kappa = \frac{2^{n}J_{\frac{n}{2}}(4\gamma)}{J_{\frac{n}{2}}(\gamma)} \le  \frac{\mu(\mathbb{R}^n)}{c_{n,k,2\epsilon}^{\rm MVT}} = \frac{\mu(\mathbb{R}^n) \gamma^{\frac{n}{2}}}{(2\epsilon)^{n} \pi^{\frac{n}{2}} J_{\frac{n}{2}}(2\gamma)}, \label{eq:concentration-measure-1}
\end{equation}
or equivalently,
\begin{equation*}
4^{n} \pi^{\frac{n}{2}} \frac{J_{\frac{n}{2}}(4\gamma) J_{\frac{n}{2}}(2\gamma)}{J_{\frac{n}{2}}(\gamma) \gamma^{\frac{n}{2}}} \epsilon^{n} \le \mu(\mathbb{R}^n). 
\end{equation*}
Since the function
\[
\gamma \mapsto \frac{J_{\frac{n}{2}}(4\gamma) J_{\frac{n}{2}}(2\gamma)}{J_{\frac{n}{2}}(\gamma) \gamma^{\frac{n}{2}}}
\]
is continuous it is bounded from above for all $\gamma \in [0, j_{\frac{n-2}{2},1}/4]$ by a constant depending only on $n$. Therefore, the required bound \eqref{eq:concentration-measure} holds if we assume that
\begin{equation}
\epsilon \leq c_n \mu(\mathbb{R}^n)^{1/n} \label{eq:check}
\end{equation}
provided $c_n$ is chosen sufficiently small, specifically so that  
\[
c_{n} \le \min_{\gamma \in [0,j_{\frac{n-2}{2},1}/4] } \bigg( \frac{ J_{\frac{n}{2}}(\gamma) \gamma^{\frac{n}{2}} }{ 4^{n} \pi^{\frac{n}{2}} J_{\frac{n}{2}}(4\gamma) J_{\frac{n}{2}}(2\gamma) } \bigg)^{1/n}.
\]

Since we chose our parameters so that $r'=4\eps$, the require bound on $k$~\eqref{eq:mollification-k-bound-1} (and thus also \eqref{eq: mollification k bound} and \eqref{eq: initial balayage k bound}) is valid if  
\begin{equation}
	0 <k < \frac{c_n}{\mu(\mathbb{R}^n)^{1/n}}. \label{eq:mollification-k-bound-2}
\end{equation}
Consequently, all our requirements are met provided
\begin{equation}
	0 <k <  
	\frac{c_n}{\mu(\mathbb{R}^n)^{1/n}} \quad \mbox{and} \quad \epsilon \leq c_n \mu(\mathbb{R}^n)^{1/n}
\end{equation}
for some constant $c_n$ depending only on $n$.

From here and on we let $h$ denote the function $h_\delta$ with the particular choice $\delta = 2\eps$ in the discussion above.
By the construction we have that 
\begin{equation}
\Bal(\mu*h) = \chi_{\omega(\mu*h)}\dm \label{eq:construction-QD-main}
\end{equation}
and
\begin{equation}\label{eq: support inclusion}
 \supp(\mu) \subset \overline{B_{\epsilon}} \subset \supp(\mu*h)\subset B_{4\epsilon}\subset \omega(\mu*h).
\end{equation}

\medskip

\textbf{Step 2: $\omega(\mu*h)$ is a $k$-quadrature domain with respect to $\mu$.} Equations \eqref{eq:sub-k-quadrature-a} and \eqref{eq:sub-k-quadrature-b} imply that $U_{k}^{\mu} - U_k^{\omega(\mu*h)}\geq 0$ with equality in $B_{R(n,k)}\setminus \omega(\mu*h)$. By elliptic regularity $U_{k}^{\omega(\mu*h)}$ is $C^1(B_{R(n, k)})$ and $U_{k}^{\mu}$ is smooth away from $\supp(\mu)$. Thus $U_{k}^{\mu} - U_k^{\omega(\mu*h)}$ attains its minimum in $B_{R(n,k)}\setminus \omega(\mu*h)$, and consequently
\begin{equation}
\nabla U_{k}^{\omega(\mu*h)} = \nabla U_{k}^{\mu} \quad \text{in } B_{R(n,k)} \setminus \omega(\mu*h). \label{eq:sub-k-quadrature-c}
\end{equation} 
Since $\supp(\mu) \subset \omega(\mu*h)\subset B_{R(n,k)}$ the extension by zero of $U_{k}^{\mu} - U_{k}^{\omega(\mu*h)}$ to all of $\mathbb{R}^n$ satisfies the assumptions of Proposition~\ref{prop_quadrature_pde_equivalence} and hence $\omega(\mu*h)$ is a $k$-quadrature domain with respect to $\mu$.

As noted above, \eqref{eq:sub-k-quadrature-convolution-a}, \eqref{eq:sub-k-quadrature-convolution-b} and Corollary~\ref{cor:patch1} imply that
\begin{equation*}
\int_{\omega(\mu*h)}w(x)\,dx \geq \int w(x)\,d(\mu*h)(x)
\end{equation*}
for all $w \in L^{1}(\omega(\mu*h))$ satisfying $(\Delta+k^2)w(x) \geq 0$ in $\omega(\mu*h)$. Assume further that $w \in L^1(\mu)$. By Fubini and since by construction $B_\delta(y) \subset \omega(\mu*h)$ for all $y\in \supp(\mu)$ the integral above can be rewritten as
\begin{align*}
    \int_{\omega(\mu*h)}w(x)\,d(\mu*h)(x) &= \int \biggl(\int_{\omega(\mu*h)} w(x)h_\delta(x-y)\,dx\biggr)\,d\mu(y)\\
    &= \int \biggl(\frac{1}{c_{n,k,\delta}^{\rm MVT}}\int_{B_\delta(y)} w(x)\,dx\biggr)\,d\mu(y).
\end{align*}
Since $B_\delta(y)\subset \omega(\mu*h)$ for all $y\in \supp(\mu)$ the function $w(x)$ is a sub-solution of the Helmholtz equation on $B_\delta(y)$ for all $y \in \supp(\mu)$. Therefore, the mean value inequality for sub-solutions of the Helmholtz equation implies that the expression within the parenthesis is greater than $w(y)$ for each $y \in \supp(\mu)$. Since $w \in L^1(\mu)$ and $\mu$ is non-negative this proves~\eqref{eq:SL-analogue-repeat}.

\medskip

\textbf{Step 3: Regularity of $\partial \omega(\mu*h)$}. We now show that $\omega(\mu*h)$ has real-analytic boundary $\partial \omega(\mu*h)$ by using the moving plane technique as in \cite[Theorem~5.4]{Gus04LecturesBalayage}. 
Set $u := U_{k}^{\mu* h} - U_{k}^{\omega(\mu*h)} \in \bigcap_{0<\alpha<1} C^{1,\alpha}(\overline{B_{R(n,k)}})$. By Proposition~\ref{prop:partial-balayage-obstacle}, we know that $u$ is the smallest among all $w \in H_{0}^{1}(B_{R(n,k)})$ satisfying 
\begin{equation}
w \ge 0 \quad \text{and} \quad (\Delta + k^{2})w \le - \mu*h + 1 \quad \text{in }B_{R(n,k)}. \label{eq:u-smallest-element}
\end{equation}
Moreover, Lemma~\ref{lem:equivalent-saturated-set1} and the fact that $\Bal(\mu*h)=\chi_{\omega(\mu*h)}\dm$ implies
\begin{equation}
u = 0 \quad \text{in } B_{R(n,k)} \setminus \omega(\mu*h). \label{eq:u-zero-ouside}
\end{equation}
Given $x_{0} \in \partial \omega(\mu*h)$, there by~\eqref{eq: support inclusion} exists a hyperplane that separates $\supp(\mu*h)$ and $x_{0}$. Since Laplacian is translation and rotation invariant, without loss of generality, we may assume that the hyperplane is 
\begin{equation}
\begin{Bmatrix}\begin{array}{l|l} x\in\mathbb{R}^{n} & x_{n}=0 \end{array}\end{Bmatrix}, \label{eq:hyperplane-coordinate}
\end{equation}
and that $\supp(\mu*h) \subset \begin{Bmatrix}\begin{array}{l|l} x\in\mathbb{R}^{n} & x_{n}<0 \end{array}\end{Bmatrix}$. We define
\[
(\omega(\mu*h))_{\rm loc} = \omega(\mu*h) \cap \begin{Bmatrix}\begin{array}{l|l} x\in\mathbb{R}^{n} & x_{n}>0 \end{array}\end{Bmatrix}
\]
and
\[
(\partial \omega(\mu*h))_{\rm loc} = \partial \omega(\mu*h) \cap \begin{Bmatrix}\begin{array}{l|l} x\in\mathbb{R}^{n} & x_{n}>0 \end{array}\end{Bmatrix} .
\]
Let $u^{*}$ be the reflection of $u$ with respect to the hyperplane \eqref{eq:hyperplane-coordinate}, that is, $u^{*}(x',x_{n}) = u(x',-x_{n})$. We now define 
\[
v := u - \inf\{u,u^{*}\} = (u-u^{*})_{+}.
\] 
Since $(\Delta + k^{2})u \le - \mu*h + 1 \le 1$ in $B_{R(n,k)}$, we have $(\Delta + k^{2})u^{*} \le 1$ in $B_{R(n,k)}$. Since there exists a unique $\phi \in H_{0}^{1}(B_{R(n,k)})$ such that $(\Delta + k^{2})\phi = 1$ in $B_{R(n,k)}$, using Proposition~\ref{prop: Min principle}, we know that 
\[
(\Delta + k^{2}) \inf\{u,u^{*}\} \le 1 \quad \text{in }B_{R(n,k)}.
\]
From $(\Delta + k^{2})u = 1$ in $(\omega(\mu*h))_{\rm loc}$, we have
\[
(\Delta + k^{2})v \ge 0 \quad \text{in }(\omega(\mu*h))_{\rm loc}.
\]
The boundary condition $v = 0$ on $\partial ((\omega(\mu*h))_{\rm loc})$ and using maximum principle in Proposition~\ref{prop: Max principle} yield $v \le 0$ in $(\omega(\mu*h))_{\rm loc}$, and hence 
\[
v = 0 \quad \text{in }(\omega(\mu*h))_{\rm loc},
\]
because $v \ge 0$ by its definition. Thus we have 
\begin{equation}
\frac{\partial u}{\partial x_{n}} \le 0 \quad \text{on } 
\begin{Bmatrix}\begin{array}{l|l} x\in\mathbb{R}^{n} & x_{n}=0 \end{array}\end{Bmatrix}. \label{eq:1-reflection}
\end{equation}
From \eqref{eq:u-zero-ouside}, we know that $\frac{\partial u}{\partial x_{n}} = 0$ on $(\partial \omega(\mu*h))_{\rm loc}$. On the other hand, we know that
\[
(\Delta + k^{2})u = 1 \quad \text{in }(\omega(\mu*h))_{\rm loc}.
\]
Hence $\frac{\partial u}{\partial x_{n}} \in C^{\infty}((\omega(\mu*h))_{\rm loc}) \cap \bigcap_{0<\alpha<1}C^{0,\alpha}(\overline{(\omega(\mu*h))_{\rm loc}})$ and it satisfies 
\[
(\Delta + k^{2}) \frac{\partial u}{\partial x_{n}} = \frac{\partial}{\partial x_{n}}(\Delta + k^{2})u = 0 \quad \text{in }(\omega(\mu*h))_{\rm loc}.
\]
Applying the strong maximum principle in Proposition~\ref{prop: Max principle} on $\frac{\partial u}{\partial x_{n}}$, we obtain $ \frac{\partial u}{\partial x_{n}} < 0$ in $(\omega(\mu*h))_{\rm loc}$ (because $ \frac{\partial u}{\partial x_{n}} \not\equiv 0$ in $\omega(\mu*h)$).

Since $x_0$ can be separated from $\supp(\mu*h)$ by hyperplanes whose normals form an open convex cone, this argument implies that in a neighbourhood of $x_0$ the function $u$ is decreasing in a cone of directions. We deduce that in a neighbourhood of $x_0$ the free boundary $\partial \omega(\mu*h)$ is the graph of a Lipschitz function. Since the choice of $x_0 \in \partial \omega(\mu*h)$ was arbitrary, we conclude that the free boundary $\partial \omega(\mu*h)$ is locally a Lipschitz graph. Using \cite{Caf77RegularityFreeBoundary,Caf80CompactnessFreeBoundary,Caf98ObstacleProblemRevisit}, we know that $\partial \omega(\mu*h)$ is $C^{1}$, and then from \cite{KN77FreeBoundary} we conclude that $\partial \omega(\mu*h)$ is real-analytic. We also refer to the monograph \cite{Fri88VariationalPrinciples} for the general regularity theory for free boundaries. 
\end{proof}

\appendix

\section{\label{appen:GreenFunction}Auxiliary propositions}


\addtocontents{toc}{\SkipTocEntry}
\subsection{A real-valued fundamental solution}

In this section we give an exact expression for a real-valued radial fundamental solution to the Helmholtz equation. This solution is positive in a ball with suitable radius, which is crucial for our construction of $k$-quadrature domains.

\begin{prop} \label{prop:GreenFunction}
Fix $k > 0$ and $n\geq 2$. For any $R>0$, let $\tilde{\Phi}_{k,R}$ be given by 
\begin{align}
\tilde{\Phi}_{k,R}(x)= 
\frac{ k^{\frac{n-2}{2}}}{4(2\pi)^{\frac{n-2}{2}}J_{\frac{n-2}{2}}(kR)}
|x|^{-\frac{n-2}{2}}\Bigl(Y_{\frac{n-2}{2}}(kR)J_{\frac{n-2}{2}}(k|x|)- J_{\frac{n-2}{2}}(kR)Y_{\frac{n-2}{2}}(k|x|)\Bigr). \label{tildephi_def}
\end{align}
Then the distribution $\tilde{\Phi}_{k,R} \in L_{\rm loc}^{1}(\mathbb{R}^{N})$ is radial, smooth outside the origin and satisfies
\begin{equation}
 \begin{cases}
 (\Delta +k^2) \tilde{\Phi}_{k,R} = -\delta_{0} \quad & \mbox{in } \mathcal{D}'(\mathbb{R}^n),\\
 \tilde{\Phi}_{k,R}(x)=0 \quad & \mbox{for }x\in\partial B_R(0).
 \end{cases}
\end{equation}
Furthermore, in the case when $0 < R < j_{\frac{n-2}{2}}k^{-1}$, the distribution $\tilde{\Phi}_{k,R}$ is positive in $B_{R}(0)$.
\end{prop}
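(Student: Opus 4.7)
My plan is to directly verify each of the claimed properties by combining standard ODE theory for Bessel's equation with the classical asymptotic expansions of $J_\nu$ and $Y_\nu$ near the origin. Throughout, let $\nu = (n-2)/2$.

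First, a radial distribution $u(x) = v(|x|)$ on $\mathbb{R}^n \setminus \{0\}$ satisfies $(\Delta + k^2)u = 0$ if and only if $v$ satisfies the radial Helmholtz ODE $v'' + \frac{n-1}{r}v' + k^2 v = 0$; the change of variables $v(r) = r^{-\nu}w(kr)$ reduces this to Bessel's equation of order $\nu$ for $w$. Consequently, $r^{-\nu}J_\nu(kr)$ and $r^{-\nu}Y_\nu(kr)$ form a fundamental system of radial solutions on $(0,\infty)$, so $\tilde{\Phi}_{k,R}$ is radial and $C^\infty$ on $\mathbb{R}^n \setminus \{0\}$ and satisfies $(\Delta+k^2)\tilde{\Phi}_{k,R} = 0$ there. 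The boundary condition $\tilde{\Phi}_{k,R}(x) = 0$ for $|x| = R$ is immediate by inspection since the bracketed expression vanishes when $|x| = R$. The local integrability near the origin will follow from the asymptotic estimates below (singularity of order $|x|^{2-n}$ for $n \geq 3$, or $\log|x|$ for $n = 2$).

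For the distributional identity $(\Delta + k^2)\tilde{\Phi}_{k,R} = -\delta_0$, I would insert the expansions $J_\nu(t) = \tfrac{(t/2)^\nu}{\Gamma(\nu+1)}(1 + O(t^2))$ and, for $\nu > 0$, $Y_\nu(t) = -\tfrac{\Gamma(\nu)}{\pi}(t/2)^{-\nu}(1 + o(1))$, and for $\nu = 0$, $Y_0(t) = \tfrac{2}{\pi}\log(t/2) + O(1)$ as $t \to 0^+$. The piece of $\tilde{\Phi}_{k,R}$ containing $J_\nu(k|x|)$ then contributes a locally bounded term near $0$, while the piece containing $Y_\nu(k|x|)$ produces the singular part. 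A direct computation (using $\Gamma(n/2) = \tfrac{n-2}{2}\Gamma(\nu)$ and $\omega_{n-1} = 2\pi^{n/2}/\Gamma(n/2)$ for $n \geq 3$) shows that this singular part coincides precisely with the Newton kernel $\tfrac{|x|^{2-n}}{(n-2)\omega_{n-1}}$, and for $n = 2$ it equals $-\tfrac{1}{2\pi}\log|x|$ up to a bounded term. Since $\tilde{\Phi}_{k,R} \in L^1_{\mathrm{loc}}(\mathbb{R}^n)$ solves $(\Delta + k^2)u = 0$ away from the origin, the distribution $(\Delta + k^2)\tilde{\Phi}_{k,R}$ is supported at $\{0\}$; matching the singular part of $\Delta$ applied to $\tilde{\Phi}_{k,R}$ with $-\delta_0$ (noting that $k^2 \tilde{\Phi}_{k,R}$ is locally integrable and thus contributes nothing to the singular support) yields the identity.

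Finally, for positivity when $0 < kR < j_{\nu,1}$, I would exploit that $J_\nu(kR) > 0$ and $J_\nu(k|x|) > 0$ throughout $\overline{B_R} \setminus \{0\}$ to rewrite
\[
\tilde{\Phi}_{k,R}(x) = \frac{k^\nu}{4(2\pi)^\nu}\,|x|^{-\nu}\,J_\nu(k|x|)\left[\frac{Y_\nu(kR)}{J_\nu(kR)} - \frac{Y_\nu(k|x|)}{J_\nu(k|x|)}\right].
\]
The crux is then to show that $t \mapsto Y_\nu(t)/J_\nu(t)$ is strictly increasing on $(0, j_{\nu,1})$. This is immediate from the Wronskian identity $J_\nu(t)Y_\nu'(t) - J_\nu'(t)Y_\nu(t) = \tfrac{2}{\pi t}$, which gives $(Y_\nu/J_\nu)'(t) = \tfrac{2}{\pi t J_\nu(t)^2} > 0$ on the stated interval. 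Since $k|x| < kR$ for $x \in B_R \setminus \{0\}$, the bracket is strictly positive, and the positivity of $\tilde{\Phi}_{k,R}$ in $B_R \setminus \{0\}$ follows. The main input — rather than obstacle — of the proof is precisely this monotonicity consequence of the Bessel Wronskian; the remaining steps are asymptotic bookkeeping.
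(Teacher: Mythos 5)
Your proposal is correct, and for the routine parts (radiality, smoothness away from $0$, vanishing on $\partial B_R$, and the distributional identity via the small-$t$ asymptotics of $J_\nu,Y_\nu$ with $\nu=\tfrac{n-2}{2}$, identifying the singular part with the Newton kernel) it matches what the paper dismisses as a ``standard calculation''; as in the paper, the full verification of $(\Delta+k^2)\tilde{\Phi}_{k,R}=-\delta_0$ would still require the usual Green's-identity/flux computation on a small annulus, but that is not a gap relative to the paper's own level of detail. Where you genuinely diverge is the positivity claim: you factor $\tilde{\Phi}_{k,R}(x)=\frac{k^{\nu}}{4(2\pi)^{\nu}}|x|^{-\nu}J_{\nu}(k|x|)\bigl[\tfrac{Y_{\nu}(kR)}{J_{\nu}(kR)}-\tfrac{Y_{\nu}(k|x|)}{J_{\nu}(k|x|)}\bigr]$ and conclude directly from the Wronskian identity $\bigl(Y_{\nu}/J_{\nu}\bigr)'(t)=\tfrac{2}{\pi t J_{\nu}(t)^2}>0$ on $(0,j_{\nu,1})$, which is valid since $0<k|x|<kR<j_{\nu,1}$ keeps $J_\nu>0$. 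The paper instead shows $\tilde{\Phi}_{k,R}\to+\infty$ at the origin, computes the radial derivative and checks it is strictly negative at $|x|=R$, and then uses the same Wronskian monotonicity—but for the quotient $Y_{n/2}/J_{n/2}$ of order $\tfrac n2$—to show the radial derivative has at most one interior zero, whence positivity follows by a critical-point count. Your route is shorter and avoids both the boundary-derivative sign check and the counting argument; the paper's route has the side benefit that the explicit radial-derivative formulas it produces (equations \eqref{eq:radial-derivative-Uk1}--\eqref{eq:radial-derivative-Uk2}) are reused immediately afterwards in the proof of the mean value theorem, Proposition~\ref{prop:MVT-metaharmonic}, so the extra computation is not wasted there.
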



\begin{rem} \label{rem:000-derivative-fundamental-solution}
Since for each $0 < t < j_{\frac{n-2}{2},1}k^{-1}$ and $0 < |x| < j_{\frac{n-2}{2},1}k^{-1}$ we have 
\begin{equation*}
\begin{aligned}
\frac{\partial}{\partial t} \tilde{\Phi}_{k,t}(x) & = 
\frac{ k^{\frac{n-2}{2}}}{4(2\pi)^{\frac{n-2}{2}}} |x|^{-\frac{n-2}{2}} J_{\frac{n-2}{2}}(k|x|)  \frac{\partial}{\partial t} \bigg( \frac{Y_{\frac{n-2}{2}}(kt)}{J_{\frac{n-2}{2}}(kt)} \bigg) \\ 
&= \frac{ k^{\frac{n-2}{2}}}{2(2\pi)^{\frac{n-2}{2}} \pi} |x|^{-\frac{n-2}{2}}  \frac{J_{\frac{n-2}{2}}(k|x|)}{t J_{\frac{n-2}{2}}(kt)^{2} } > 0,
\end{aligned}
\end{equation*}
then for each $0 < s < r < R < j_{\frac{n-2}{2},1}k^{-1}$ we have 
\begin{equation}
0 \le \tilde{\Phi}_{k,s} < \tilde{\Phi}_{k,r} \text{ in } B_{s} \quad \text{and} \quad \tilde{\Phi}_{r} \ge 0 \text{ in }B_{r}. \label{eq:monotonicity-fundamental-solution}
\end{equation}
\end{rem}

\begin{proof}
Clearly $\tilde{\Phi}_{k,R}$ is smooth outside the origin and vanishes on $\partial B_R(0)$. A standard calculation verifies that in the distributional sense $(\Delta+k^2)\tilde{\Phi}_{k,R} = -\delta_{0}$. 

It remains to prove that $\tilde{\Phi}_{k,R}$ is positive in $B_R(0)$. By the asymptotic behaviour of Bessel functions at $0$ one readily checks that $\lim_{|x|\to 0} \tilde{\Phi}_{k, R}(x)=+\infty$. Similarly, one computes 
\begin{equation}
\begin{aligned}
 &\quad \frac{x}{|x|}\cdot \nabla \tilde{\Phi}_{k,R}(x) \\
 &= 
 -\frac{k^{\frac{n}{2}}}{4(2\pi)^{\frac{n-2}{2}}J_{\frac{n-2}{2}}(kR))}
 |x|^{-\frac{n-2}{2}}\Bigl(Y_{\frac{n-2}{2}}(kR)J_{\frac{n}{2}}(k|x|)- J_{\frac{n-2}{2}}(kR)Y_{\frac{n}{2}}(k|x|)\Bigr). 
\end{aligned} \label{eq:radial-derivative-Uk1}
\end{equation}
When $|x|=R$ the right-hand side simplifies to
\begin{equation}
  -\frac{ k^{\frac{n-2}{2}}}{(2\pi)^{\frac{n-2}{2}} J_{\frac{n-2}{2}}(kR)} R^{-\frac{n}{2}},\label{eq:radial-derivative-Uk2}
\end{equation}
which is negative since $J_{\frac{n-2}{2}}$ is positive up to its first zero. Thus in order to show positivity of $\tilde{\Phi}_{k,R}$ in $B_R(0)$, it is enough to show that there is at most one $r \in (0,R)$ so that the radial derivative of $\tilde{\Phi}_{k,R}$ vanishes when $\abs{x}=r$.
 
Finally, if $r\in (0, R)$ is such that the radial derivative vanishes when $|x|=r$ the expression for the derivative implies that
\begin{equation}\label{eq: vanishing radial derivative}
\frac{Y_{\frac{n}{2}}(kr)}{J_{\frac{n}{2}}(kr)}=\frac{Y_{\frac{n-2}{2}}(kR)}{J_{\frac{n-2}{2}}(kR)}.
\end{equation}
Since
\begin{equation}
\frac{d}{dr}\frac{Y_{\frac{n}{2}}(kr)}{J_{\frac{n}{2}}(kr)} = \frac{2}{\pi r J_{\frac{n}{2}}(k r)^2}
\end{equation}
is positive on $(0, R)$ we conclude that there is at most one solution of~\eqref{eq: vanishing radial derivative} in $(0, R)$. This concludes the proof.
\end{proof}


\addtocontents{toc}{\SkipTocEntry}
\subsection{The mean value theorem}

\begin{prop} \label{prop:MVT-metaharmonic}
Let $n\ge2$ be an integer, and let $R>0$ be any constant. If $u \in L^1(B_R(x_0))$ is a solution to 
\[
(\Delta+k^{2})u=0\quad\text{in}\;\;B_{R}(x_{0}),
\]
then\labeltext{$c_{n,k,r}^{{\rm MVT}}$ constant related to mean value theorem}{index:ConstantMVTRepeat}
\begin{equation}
\begin{aligned}
\int_{B_{R}(x_0)}u(x)\,dx &= c_{n,k,R}^{{\rm MVT}}u(x_{0})\quad\text{with}\quad c_{n,k,R}^{{\rm MVT}} = 
(2\pi)^{n/2} \frac{R^{\frac{n}{2}}J_{\frac{n}{2}}(kR)}{k^{\frac{n}{2}}}.
\end{aligned}\label{eq:MVT-Helmholtz}
\end{equation}
In addition, if we assume that $0 < R < j_{\frac{n-2}{2},1}k^{-1}$ and $u \in L^{1}(B_{R}(x_{0}))$ is a sub-solution of the Helmholtz equation,
\[
(\Delta + k^{2}) u \geq 0 \quad \text{in }B_{R}(x_{0}),
\]
then 
\begin{equation}
\int_{B_{R}(x_{0})}u(x)\,d x\geq c_{n,k,R}^{{\rm MVT}}u(x_{0})\label{eq:MVT-super-Helmholtz}
\end{equation}
with equality if and only if $(\Delta+ k^2)u =0$ in $B_R(x_0)$.
In addition,
the mapping 
\[
r \mapsto \frac{1}{c_{n,k,r}^{{\rm MVT}}} \int_{B_{r}(x_{0})}u(x)\,d x
\]
is monotone increasing on $(0, R)$ unless there exists an $0<R'\leq R$ such that $(\Delta+k^2)u =0$ in $B_{R'}(x_0)$ in which case the mapping is constant on $(0, R')$ and increasing on $(R', R)$. 
\end{prop}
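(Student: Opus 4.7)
The plan is to derive all three parts from a single application of Green's second identity using the positive radial fundamental solution $\tilde{\Phi}_{k,r}$ constructed in Proposition~\ref{prop:GreenFunction}, which is available for every $r<j_{\frac{n-2}{2},1}k^{-1}$. Fix $x_0\in\mathbb{R}^n$ and write $\Phi_r(x):=\tilde{\Phi}_{k,r}(x-x_0)$ and $g(r):=c_{n,k,r}^{{\rm MVT}}=(2\pi)^{n/2}k^{-n/2}r^{n/2}J_{\frac{n}{2}}(kr)$. The Bessel recurrence $\frac{d}{dr}[r^{n/2}J_{\frac{n}{2}}(kr)]=kr^{n/2}J_{\frac{n-2}{2}}(kr)$ will simultaneously furnish $g'(r)=(2\pi)^{n/2}k^{-\frac{n-2}{2}}r^{n/2}J_{\frac{n-2}{2}}(kr)$ and the primitive identity $g(R)=\int_0^R g'(r)\,dr$. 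Because $\Phi_r$ vanishes on $\partial B_r(x_0)$, and the Wronskian for Bessel functions forces the radial derivative $\partial_\nu \Phi_r$ to reduce on $\partial B_r(x_0)$ to the constant $-1/g'(r)$, Green's second identity produces, for every $r<j_{\frac{n-2}{2},1}k^{-1}$, the master identity
\begin{equation*}
\int_{\partial B_r(x_0)} u\, dS = g'(r)\Bigl[u(x_0) + \int_{B_r(x_0)} \Phi_r\,(\Delta+k^2)u\,dx\Bigr]. \qquad (\star)
\end{equation*}

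For the equality in Part~1, specializing $(\star)$ to $(\Delta+k^2)u=0$ kills the correction in the bracket, and integration in $r$ via the coarea formula together with $\int_0^R g'=g(R)$ immediately yields \eqref{eq:MVT-Helmholtz} on the range $R<j_{\frac{n-2}{2},1}k^{-1}$. To handle arbitrary $R>0$, I would argue by the classical device of spherical means: the normalized mean $M(r):=|\partial B_r|^{-1}\int_{\partial B_r(x_0)}u\,dS$ of a classical solution satisfies the radial Bessel ODE $M''+(n-1)r^{-1}M'+k^2 M = 0$ with $M(0^+)=u(x_0)$, whose unique regular solution is $u(x_0)\cdot 2^{\frac{n-2}{2}}\Gamma(n/2)(kr)^{-\frac{n-2}{2}}J_{\frac{n-2}{2}}(kr)$; multiplication by $|\partial B_r|$ and a second use of the Bessel recurrence then produces \eqref{eq:MVT-Helmholtz} for all $R>0$, regardless of where zeros of $J_{\frac{n-2}{2}}$ may lie. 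For Part~2, under the standing assumption $R<j_{\frac{n-2}{2},1}k^{-1}$ Proposition~\ref{prop:GreenFunction} guarantees $\Phi_r\geq 0$ throughout $B_r(x_0)$ for every $r\leq R$; combined with $(\Delta+k^2)u\geq 0$ in the distributional sense, the bracket in $(\star)$ is $\geq u(x_0)$, so integration in $r$ produces \eqref{eq:MVT-super-Helmholtz}. Equality in \eqref{eq:MVT-super-Helmholtz} forces $\int_{B_r(x_0)}\Phi_r(\Delta+k^2)u\,dx=0$ for every $r\in(0,R)$, and since $\Phi_r$ is strictly positive in the interior of $B_r(x_0)$, the positive Radon measure $(\Delta+k^2)u$ must vanish throughout $B_R(x_0)$.

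For the monotonicity in Part~3, set
\begin{equation*}
\psi(r):= u(x_0) + \int_{B_r(x_0)} \Phi_r\,(\Delta+k^2)u\,dx,
\end{equation*}
so that $(\star)$ becomes $G'(r) = g'(r)\psi(r)$, where $G(r):=\int_{B_r(x_0)}u\,dx$. The key input is that $\psi$ is itself non-decreasing: for $r_1 < r_2 < R$ one has the decomposition
\begin{equation*}
\psi(r_2) - \psi(r_1) = \int_{B_{r_1}} (\Phi_{r_2}-\Phi_{r_1})\,(\Delta+k^2)u\,dx + \int_{B_{r_2}\setminus B_{r_1}} \Phi_{r_2}\,(\Delta+k^2)u\,dx,
\end{equation*}
and both integrands are non-negative by Remark~\ref{rem:000-derivative-fundamental-solution} (which provides $\Phi_{r_2}>\Phi_{r_1}\geq 0$ on $B_{r_1}$ and $\Phi_{r_2}>0$ on $B_{r_2}\setminus\overline{B_{r_1}}$) together with $(\Delta+k^2)u\geq 0$; the decomposition also shows $\psi(r_2)=\psi(r_1)$ precisely when $(\Delta+k^2)u\equiv 0$ on $B_{r_2}$. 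Integrating $G'=g'\psi$ from $0$ and dividing by $g(r)=\int_0^r g'(s)\,ds$ then exhibits
\begin{equation*}
F(r) := \frac{G(r)}{g(r)} = \frac{\int_0^r g'(s)\psi(s)\,ds}{\int_0^r g'(s)\,ds}
\end{equation*}
as a weighted average of the non-decreasing function $\psi$ with strictly positive weight $g'>0$ on the range $r<j_{\frac{n-2}{2},1}k^{-1}$. Such a weighted average is itself non-decreasing, and it is strictly increasing at $r$ whenever $\psi(r)$ strictly exceeds its prior weighted average. This yields the claimed dichotomy: $F$ is constant on a maximal initial interval $(0,R')$ iff $\psi$ is constant there, i.e.\ iff $(\Delta+k^2)u\equiv 0$ on $B_{R'}(x_0)$, after which $F$ is strictly increasing on $(R',R)$.

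The main technical obstacle will be justifying Green's identity and the above manipulations for $u\in L^1(B_R(x_0))$ that is only a distributional sub-solution. I would handle this by a standard mollification argument: replace $u$ by its smooth convolution approximations $u_\varepsilon:= u\ast\rho_\varepsilon$ on compactly contained subballs, which inherit $(\Delta+k^2)u_\varepsilon\geq 0$, apply $(\star)$ to $u_\varepsilon$, and pass to the limit using the local integrability of $\Phi_r$, elliptic regularity where relevant, and weak convergence of the positive measures $(\Delta+k^2)u_\varepsilon$ to $(\Delta+k^2)u$.
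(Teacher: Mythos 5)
Your proposal is correct and follows essentially the same route as the paper's proof: Green's identity with the positive fundamental solution $\tilde{\Phi}_{k,r}$ of Proposition~\ref{prop:GreenFunction} yields exactly the paper's master identity \eqref{eq:MVT-boundary}, positivity of $\tilde{\Phi}_{k,r}$ in $B_r$ gives the sub-solution inequality and the equality characterization, the monotonicity of $r\mapsto\tilde{\Phi}_{k,r}$ from Remark~\ref{rem:000-derivative-fundamental-solution} drives the monotonicity statement, and mollification handles $L^1$ sub-solutions, just as in the paper. The only departures are minor and harmless: you extend the equality \eqref{eq:MVT-Helmholtz} to arbitrary $R>0$ via the classical spherical-means ODE (the paper instead remarks that its computation goes through with all inequalities becoming equalities, which is terser and glosses over radii where $J_{\frac{n-2}{2}}(kr)=0$), and you package Part~3 as a weighted average of the non-decreasing function $\psi$ rather than the paper's explicit derivative computation \eqref{eq:monotonicity-boundary-integral-2}.
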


\begin{rem}
In particular, 
\begin{align*}
\text{when }n=2, & \quad c_{2,k,R}^{{\rm MVT}}=\frac{2\pi RJ_{1}(kR)}{k},\\
\text{when }n=3, & \quad c_{3,k,R}^{{\rm MVT}}=\frac{4\pi(\sin(kR)-kR\cos(kR))}{k^{3}}.
\end{align*}
Unlike the mean value theorem for harmonic functions, there are radii for which
$c^{{\rm MVT}}_{n,k,R}$ is zero or even negative.
\end{rem}


\begin{proof} [Proof of Proposition~{\rm \ref{prop:MVT-metaharmonic}}]
In what follows we shall assume additionally that $u \in C^\infty(B_R)$. For general sub-solutions the statement can be deduced by a standard mollification argument as in the proof of Proposition~\ref{prop:prop_runge_lone-subsolution}.

Without loss of generality, we may assume that $x_{0}=0$. For each $0<r<R$, using Proposition~\ref{prop:GreenFunction} as well as \eqref{eq:radial-derivative-Uk1} and \eqref{eq:radial-derivative-Uk2}, we have 
\begin{equation*}
\begin{aligned}
u(0) & =-\int_{B_{r}}(\Delta+k^{2})\tilde{\Phi}_{k,r}(x)u(x)\,dx \\ &
=-\int_{B_{r}}\bigg[\Delta \tilde{\Phi}_{k,r}(x)u(x)-\tilde{\Phi}_{k,r}(x)\Delta u(x)\bigg]\,dx  - \int_{B_{r}} \tilde{\Phi}_{k,r}(x) (\Delta + k^{2}) u (x) \,dx \\
 & =-\int_{\partial B_{r}}\bigg[\partial_{|x|}\tilde{\Phi}_{k,r}(x)u(x)-\overbrace{\tilde{\Phi}_{k,r}(x)}^{=\,0}\partial_{|x|}u(x)\bigg] dS  - \int_{B_{r}} \tilde{\Phi}_{k,r}(x) (\Delta + k^{2}) u (x) \,dx \\
 & =\frac{k^{\frac{n-2}{2}}}{(2\pi)^{\frac{n}{2}} J_{\frac{n-2}{2}}(kr)} r^{-\frac{n}{2}}\int_{\partial B_{r}}u(x)\, dS  - \int_{B_{r}} \tilde{\Phi}_{k,r}(x) (\Delta + k^{2}) u (x) \,dx , 
\end{aligned} 
\end{equation*}
that is, 
\begin{equation}
\begin{aligned}
&\quad \frac{k^{\frac{n-2}{2}}}{(2\pi)^{\frac{n}{2}}}\int_{\partial B_{r}}u(x)\,dS \\
&= J_{\frac{n-2}{2}}(kr)r^{\frac{n}{2}}u(0) + J_{\frac{n-2}{2}}(kr)r^{\frac{n}{2}} \int_{B_{r}} \tilde{\Phi}_{k,r}(x) (\Delta + k^{2}) u (x) \,dx.
\end{aligned} \label{eq:MVT-boundary}
\end{equation}
If $(\Delta + k^{2})u \geq 0$ in $B_{R}$ with $0<R<j_{\frac{n-2}2,1}k^{-1}$, then since $\tilde \Phi_r > 0$ in $B_r$ and $J_{\frac{n}{2}}(kr)>0$ for $r<j_{\frac{n-2}2,1}k^{-1}$, we conclude that 
\[
\frac{k^{\frac{n-2}{2}}}{(2\pi)^{\frac{n}{2}}}\int_{\partial B_{r}}u(x)\,dS \geq J_{\frac{n-2}{2}}(kr)r^{\frac{n}{2}}u(0),
\]
with equality if and only if $(\Delta+k^2)u=0$ in $B_r$. Therefore,
\[
\frac{k^{\frac{n-2}{2}}}{(2\pi)^{\frac{n}{2}}} \int_{B_{R}}u(x)\,dx \geq \bigg(\int_{0}^{R}J_{\frac{n-2}{2}}(kr)r^{\frac{n}{2}}\,dr\bigg)u(0)=\frac{R^{\frac{n}{2}}J_{\frac{n}{2}}(kR)}{k}u(0),
\]
with equality if and only if $(\Delta+k^2)u=0$ in $B_R$. This proves \eqref{eq:MVT-Helmholtz} and \eqref{eq:MVT-super-Helmholtz} for $R < j_{\frac{n-2}2,1}k^{-1}$, however for larger $R$ the argument can be followed step-by-step and all inequalities become equalities.

If $(\Delta + k^{2})u \ge 0$ in $B_{R}$, then by \eqref{eq:monotonicity-fundamental-solution} we have, for all $0 < s < r < R < j_{\frac{n-2}{2},1}k^{-1}$,  
\[
\quad \int_{B_{r}} \tilde{\Phi}_{k,r}(x) (\Delta + k^{2}) u (x) \,dx 
 \ge \int_{B_{s}} \tilde{\Phi}_{k,r}(x) (\Delta + k^{2}) u (x) \,dx \ge \int_{B_{s}} \tilde{\Phi}_{k,s}(x) (\Delta + k^{2}) u (x) \,dx,
\]
with equality if and only if $(\Delta+k^2)u=0$ in $B_r$. By~\eqref{eq:MVT-boundary} it follows that
\begin{equation}
\frac{1}{r^{\frac{n}{2}} J_{\frac{n-2}{2}}(kr)} \int_{\partial B_{r}}u(x)\, dS \ge \frac{1}{s^{\frac{n}{2}} J_{\frac{n-2}{2}}(ks)} \int_{\partial B_{s}}u(x)\, dS \label{eq:monotonicity-boundary-integral-1}
\end{equation}
with equality if and only if $(\Delta+k^2)u=0$ in $B_r$.
Note that 
\begin{equation}
\begin{aligned}
& \quad \frac{\partial}{\partial r} \bigg( \frac{1}{c_{n,k,r}^{{\rm MVT}}} \int_{B_{r}}u(x)\,dx \bigg) \\
&= (2\pi)^{n/2} k^{\frac{n}{2}} \bigg[ \frac{\partial}{\partial r}  \bigg( \frac{1}{r^{\frac{n}{2}}J_{\frac{n}{2}}(kr)} \bigg) \int_{B_{r}}u(x)\,dx + \frac{1}{r^{\frac{n}{2}}J_{\frac{n}{2}}(kr)} \int_{\partial B_{r}}u(x)\,dS \bigg] \\
&= (2\pi)^{n/2} k^{\frac{n}{2}} \bigg[ -\frac{k J_{\frac{n-2}{2}}(kr)}{r^{\frac{n}{2}} J_{\frac{n}{2}}(kr)^{2}} \int_{0}^{r} s^{\frac{n}{2}} J_{\frac{n-2}{2}}(ks) \bigg( \frac{1}{s^{\frac{n}{2}} J_{\frac{n-2}{2}}(ks)} \int_{\partial B_{s}}u(x)\,dS \bigg) \,ds \\
& \qquad + \frac{1}{r^{\frac{n}{2}}J_{\frac{n}{2}}(kr)} \int_{\partial B_{r}}u(x)\,dS \bigg]. 
\end{aligned}\label{eq:monotonicity-boundary-integral-2}
\end{equation}
From \eqref{eq:monotonicity-boundary-integral-1} and \eqref{eq:monotonicity-boundary-integral-2} we have
\begin{equation*}
\begin{aligned}
& \quad \frac{\partial}{\partial r} \bigg( \frac{1}{c_{n,k,r}^{{\rm MVT}}} \int_{B_{r}}u(x)\,dx \bigg) \\
& \ge \bigg( (2\pi)^{n/2} k^{\frac{n}{2}} r^{-\frac{n}{2}}\int_{\partial B_{r}}u(x)\,dS \bigg) \bigg( -\frac{1}{r^{\frac{n}{2}} J_{\frac{n}{2}}(kr)^{2}} \int_{0}^{r} k s^{\frac{n}{2}} J_{\frac{n-2}{2}}(ks) \,ds + \frac{1}{J_{\frac{n}{2}}(kr)} \bigg) \\
& = \bigg( (2\pi)^{n/2} k^{\frac{n}{2}} r^{-\frac{n}{2}}\int_{\partial B_{r}}u(x)\,dS \bigg) \bigg( -\frac{1}{r^{\frac{n}{2}} J_{\frac{n}{2}}(kr)^{2}} r^{\frac{n}{2}}J_{\frac{n}{2}}(kr) + \frac{1}{J_{\frac{n}{2}}(kr)} \bigg) \\
& = 0
\end{aligned}
\end{equation*}
for all $0 < r < R < j_{\frac{n-2}{2},1}k^{-1}$ and equality holds if and only if $(\Delta+k^2)u =0$ in $B_r$. This completes the proof of Proposition~\ref{prop:MVT-metaharmonic}.
\end{proof}


\addtocontents{toc}{\SkipTocEntry}
\subsection{Maximum principle}

We will need the following (generalized) maximum principle and properties of sub/super-solutions in small domains. 

\begin{prop}\label{prop: Max principle}
Fix $n \ge 2$, let $U\subset \mathbb{R}^n$ be a bounded open set, and let $\lambda_1(U)$ denote the first eigenvalue of the Dirichlet Laplacian on $U$, that is
\begin{equation}
\lambda_{1}(U) := \inf_{u \in H_{0}^{1}(U)} \frac{\| \nabla u \|_{L^{2}(U)}^{2}}{\| u \|_{L^{2}(U)}^2}. \label{eq:fundamental-tone-U}
\end{equation}
Given any $0<k^{2} < \lambda_{1}(U)$. If $w \in H^{1}(U)$ satisfies $w|_{\partial U} \le 0$ {\rm (}i.e. $w_{+} := \max \{ w,0 \} \in H_{0}^{1}(U)${\rm )} and $(\Delta + k^{2})w \ge 0$ in the sense of $H^{-1}(U)$, then $w \le 0$ in $U$. If we additionally assume that $w \in C(U)$, then in each connected component of $U$ we have either $w < 0$ or $w \equiv 0$.  
\end{prop}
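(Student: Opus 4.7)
The plan is to first establish the sign statement $w \le 0$ in $U$ by a direct energy argument based on the Rayleigh characterization of $\lambda_1(U)$, and then derive the strong/connected-component version by reducing to the classical strong maximum principle for superharmonic functions applied to $v = -w$.

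For the weak part, the natural test function is $w_+ = \max\{w,0\} \in H_0^1(U)$, which is a non-negative element of $H_0^1(U)$ by the assumption $w|_{\partial U}\le 0$. Pairing the hypothesis $(\Delta+k^2)w \ge 0$ with $w_+$ in the $H^{-1}(U)\times H_0^1(U)$ duality and integrating by parts gives
\begin{equation*}
0 \ge \int_U \nabla w \cdot \nabla w_+ \,dx - k^2 \int_U w\, w_+ \,dx = \int_U |\nabla w_+|^2 \,dx - k^2 \int_U w_+^2 \,dx,
\end{equation*}
where we have used the standard facts that $\nabla w \cdot \nabla w_+ = |\nabla w_+|^2$ and $w\, w_+ = w_+^2$ almost everywhere. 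Combining this with the Rayleigh characterization \eqref{eq:fundamental-tone-U}, which yields $\int_U |\nabla w_+|^2 \ge \lambda_1(U) \int_U w_+^2$, we obtain $(\lambda_1(U) - k^2) \int_U w_+^2 \le 0$. Since by hypothesis $\lambda_1(U) - k^2 > 0$ this forces $w_+ \equiv 0$, i.e.\ $w \le 0$ in $U$.

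For the second assertion, assume in addition that $w \in C(U)$ and set $v := -w \in C(U)$, so $v \ge 0$ in $U$ by what we just proved. Rewriting $(\Delta + k^2)w \ge 0$ in terms of $v$, we have $-\Delta v \ge k^2 v \ge 0$ weakly in $U$. In particular $v$ is a continuous weak supersolution of $-\Delta u = 0$ in $U$. Fix a connected component $\Omega'$ of $U$ and suppose $v \not\equiv 0$ on $\Omega'$; we want to show $v > 0$ on $\Omega'$. If on the contrary $v(x_0)=0$ for some $x_0 \in \Omega'$, then since $v \ge 0$ the point $x_0$ is an interior minimum of $v$ on $\Omega'$. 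The strong minimum principle for continuous weak superharmonic functions (see e.g.\ \cite[Theorem~8.19]{GT01Elliptic}) then implies $v \equiv v(x_0) = 0$ on $\Omega'$, contradicting $v \not\equiv 0$. Hence $v > 0$, equivalently $w < 0$, throughout $\Omega'$, and otherwise $w \equiv 0$ on $\Omega'$, completing the proof.

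The only mildly delicate point is the application of the strong minimum principle in the last paragraph: one should verify that the reference applies to $H^1$-supersolutions of $-\Delta u = 0$ without any further regularity beyond continuity, which is indeed the content of the weak strong maximum principle in \cite[Theorem~8.19]{GT01Elliptic}.
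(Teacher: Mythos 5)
Your argument for the weak statement ($w\le 0$) is essentially the same as the paper's: test $(\Delta+k^2)w\ge 0$ with $w_+\in H_0^1(U)$ and combine with the variational characterization of $\lambda_1(U)$ (the paper phrases the last step as the Poincar\'e inequality, which is the same thing). The second, strong part takes a genuinely different route. The paper applies its own mean value \emph{inequality} for metasubharmonic functions (Proposition~\ref{prop:MVT-metaharmonic}): if $w(x_0)=0$ then $\int_{B_\eps(x_0)}w\,dx\ge c^{\rm MVT}_{n,k,\eps}\,w(x_0)=0$ for small $\eps$, which together with $w\le 0$ and continuity forces $w\equiv 0$ near $x_0$; then an open-and-closed argument in each component finishes. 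You instead observe that with $v=-w\ge 0$ the hypothesis $(\Delta+k^2)w\ge 0$ gives $-\Delta v\ge k^2 v\ge 0$, so $v$ is a continuous weak superharmonic function, and you invoke the classical strong maximum principle for $W^{1,2}$ weak solutions of the Laplacian (Gilbarg--Trudinger Theorem~8.19 applied to $-v$, noting $\sup_{\Omega'}(-v)=0$ is attained at $x_0$). This reduction is correct and is a clean way to avoid introducing the Helmholtz mean value machinery; it crucially uses the already-established sign $v\ge 0$ to absorb the zeroth-order term, and it trades the paper's internal (and needed elsewhere) MVT for a textbook citation. Both approaches are valid; the paper's version keeps the proof self-contained within its own framework, while yours is shorter given the external reference.
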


\begin{prop}\label{prop: Min principle}
Fix $n \ge 2$, $k>0$, and let $U\subset \mathbb{R}^n$ be a bounded open set. If $w_1, w_2 \in H^1(U)$ satisfy $(\Delta+k^2)w_j \le 0$ in the sense of $H^{-1}(U)$ for $j=1$ and $2$, then the same is true for $w=\min\{w_1, w_2\}$.
\end{prop}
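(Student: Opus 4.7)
The plan is to reduce the statement to the weak-form inequality
\[
\int_U \nabla w \cdot \nabla \phi \,dx \ge k^{2} \int_U w \phi \,dx \quad\text{for all } \phi \in H^{1}_{0}(U),\ \phi \ge 0,
\]
with $w = \min\{w_1, w_2\}$, which is the meaning of $-(\Delta+k^{2})w \ge 0$ in $H^{-1}(U)$. Since $\min\{w_1,w_2\} = \tfrac{1}{2}(w_1 + w_2 - |w_1 - w_2|)$, Stampacchia's lemma gives $w \in H^{1}(U)$, so the inequality makes sense. To deal with the non-smooth $\min$, I would introduce the concave smooth regularization
\[
m_\eps(a,b) := \frac{a+b}{2} - \frac{1}{2}\sqrt{(a-b)^{2} + \eps^{2}}, \qquad \alpha_\eps(a,b) := \partial_a m_\eps(a,b) = \frac{1}{2} - \frac{a-b}{2\sqrt{(a-b)^{2}+\eps^{2}}} \in [0,1],
\]
so that $\partial_b m_\eps = 1-\alpha_\eps$, and $m_\eps(w_1,w_2) \to \min\{w_1,w_2\}$ in $H^{1}(U)$ as $\eps \to 0^{+}$.

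Fix $\phi \in C^{\infty}_{c}(U)$ with $\phi \ge 0$. Writing $\alpha_\eps$ for $\alpha_\eps(w_1,w_2)$, the functions $\alpha_\eps \phi$ and $(1-\alpha_\eps)\phi$ lie in $H^{1}_{0}(U)$ and are nonnegative (because $\alpha_\eps$ is bounded and Lipschitz in its arguments, hence in $H^{1}(U)\cap L^{\infty}(U)$). Applying the two supersolution inequalities to these test functions gives
\[
\int \nabla w_1 \cdot \nabla(\alpha_\eps \phi) \,dx \ge k^{2}\!\int w_1 \alpha_\eps \phi \,dx, \qquad \int \nabla w_2 \cdot \nabla((1-\alpha_\eps)\phi) \,dx \ge k^{2}\!\int w_2 (1-\alpha_\eps)\phi \,dx.
\]
Expanding each gradient by the product rule and adding, the $\alpha_\eps \nabla \phi$ and $(1-\alpha_\eps)\nabla\phi$ pieces assemble into $\nabla m_\eps(w_1,w_2)\cdot \nabla \phi$ (by the $H^{1}$ chain rule), leaving a cross term
\[
I_\eps := \int \phi\,(\nabla w_1 - \nabla w_2) \cdot \nabla \alpha_\eps \,dx.
\]

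The crucial sign observation is that $\nabla \alpha_\eps = -\tfrac{1}{2}\eps^{2}\bigl((w_1-w_2)^{2}+\eps^{2}\bigr)^{-3/2}\nabla(w_1-w_2)$, so $(\nabla w_1 - \nabla w_2)\cdot \nabla \alpha_\eps = -\tfrac{1}{2}\eps^{2}\bigl((w_1-w_2)^{2}+\eps^{2}\bigr)^{-3/2}|\nabla(w_1-w_2)|^{2} \le 0$ a.e., and hence $I_\eps \le 0$ because $\phi \ge 0$. Dropping $I_\eps$ preserves the sense of the inequality and yields
\[
\int \nabla m_\eps(w_1,w_2) \cdot \nabla \phi \,dx \ge k^{2}\!\int \bigl(\alpha_\eps w_1 + (1-\alpha_\eps)w_2\bigr)\phi \,dx.
\]
Letting $\eps \to 0^{+}$, dominated convergence (the gradients being bounded pointwise by $|\nabla w_1|+|\nabla w_2|\in L^{2}$, and $\alpha_\eps$ being bounded by $1$) turns the left side into $\int \nabla \min\{w_1,w_2\}\cdot \nabla \phi$ and the right side into $k^{2}\!\int \min\{w_1,w_2\}\phi$, using that $\alpha_\eps \to \chi_{\{w_1<w_2\}}$ on $\{w_1\neq w_2\}$ while on $\{w_1=w_2\}$ the integrand is insensitive to $\alpha_\eps$. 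Density of $C^{\infty}_{c}(U)_{\ge 0}$ in $H^{1}_{0}(U)_{\ge 0}$ extends the inequality to all admissible test functions, completing the proof. The main technical obstacle is justifying that the cross term $I_\eps$ has the right sign and managing the limit on the set $\{w_1 = w_2\}$; both are handled by the elementary sign of the regularization and Stampacchia's identity $\nabla w_1 = \nabla w_2$ a.e.\ on $\{w_1 = w_2\}$, so the rest of the argument is routine.
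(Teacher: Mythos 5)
Your proof is correct and takes a genuinely different route from the paper's. The paper localizes to a small ball $B_r$ (with $r$ chosen so that $k^2 < \lambda_1(B_r)$, by a partition-of-unity argument), produces a positive solution $u_0$ of $(\Delta+k^2)u_0 = 0$ on $B_r$ with $u_0|_{\partial B_r}=1$, and conjugates: $(\Delta+k^2)(u_0 v) = u_0^{-1}\nabla\cdot(u_0^2\nabla v)$. This removes the zeroth-order term, so one may quote the standard fact that the minimum of two supersolutions of a pure divergence-form operator is again a supersolution (cited from Kinderlehrer--Stampacchia), then undo the conjugation using $u_0 > 0$ to write $\min\{w_1,w_2\} = u_0\min\{v_1,v_2\}$. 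The localization is essential there, because $u_0>0$ requires $k^2$ below the first Dirichlet eigenvalue of the ball. Your argument instead works directly with the Helmholtz operator on all of $U$: you test with the convex-combination test functions $\alpha_\eps\phi$ and $(1-\alpha_\eps)\phi$ built from a smooth concave regularization of $\min$, observe that the cross term $I_\eps$ is $-\tfrac{1}{2}\eps^2\int \phi\,((w_1-w_2)^2+\eps^2)^{-3/2}|\nabla(w_1-w_2)|^2\le 0$, and pass to the limit using dominated convergence together with the Stampacchia identity $\nabla w_1 = \nabla w_2$ a.e.\ on $\{w_1=w_2\}$. What your approach buys is that it is self-contained (no citation to the divergence-form result), needs no localization, and does not use the spectral restriction $k^2<\lambda_1$ at any point; in effect, it gives a direct re-proof of the Kinderlehrer--Stampacchia lemma in the presence of a zeroth-order term of either sign. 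The paper's approach is shorter given the cited theorem, and the conjugation trick is a reusable device for reducing Helmholtz questions to the Laplacian locally. Both are valid.
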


\begin{rem}
Note that $\lambda_{1}(B_{R}) = j_{\frac{n-2}{2},1}^{2}R^{-2}$. Therefore,  the condition $k^{2} < \lambda_{1}(U)$ is satisfied if  $U \subset B_{R}$ with $0 < R < j_{\frac{n-2}{2},1}k^{-1}$. 
\end{rem}


\begin{proof}[Proof of Proposition~{\rm \ref{prop: Max principle}}]
We observe that $\int_{U} \nabla w_{-} \cdot \nabla w_{+} \,dx = 0$ and test the weak formulation of the equation $(\Delta + k^{2})w \ge 0$ in $H^{-1}(U)$ with $w_{+} \in H_{0}^{1}(U)$. Consequently
\[
\| \nabla w_{+} \|_{L^{2}(U)}^{2} = \int_{U} \nabla w \cdot \nabla w_{+} \,dx = - \langle \Delta w,w_{+} \rangle \le k^{2} \| w_{+} \|_{L^{2}(U)}^{2}.
\]
By the Poincar\'{e} inequality 
\[
\| w_{+} \|_{L^{2}(U)}^{2} \le \frac{1}{\lambda_{1}(U)} \| \nabla w_{+} \|_{L^{2}(U)}^{2} \le \frac{k^{2}}{\lambda_{1}(U)} \| w_{+} \|_{L^{2}(U)}^{2}.
\]
Since $\frac{k^{2}}{\lambda_{1}(U)}<1$ it follows that $w_{+} \equiv 0$, and therefore $w \le 0$ in $U$.

Let us consider the case when $w \in C(U)$. Let $G$ be a connected component of $U$. If $w \equiv 0$, then we have nothing to prove. We consider the case when $w \not\equiv 0$ and we aim to prove that $w<0$ in $G$. Suppose the contrary, that there exists $x_{0} \in G$ such that $w(x_{0})=0$. Then by the mean value theorem (Proposition~{\rm \ref{prop:MVT-metaharmonic}}), 
\[
\int_{B_{\epsilon}(x_{0})} w(x) \,dx \ge 0
\]
for all $\epsilon>0$ with $\overline{B_{\epsilon}(x_{0})} \subset G$. However, since $w \le 0$ in $U$ and $w \in C(U)$ this implies that $w=0$ in $B_{\epsilon}(x_{0})$. Since $x_0$ was an arbitrary point where $w=0$ we have shown that the set $\{x\in G: w(x)=0\}$ is relatively open in $G$. Since $w$ is continuous this set is also relatively closed, and consequently either $w\equiv 0$ in $G$ or $w<0$ in $G$. This completes the proof of Proposition~\ref{prop: Max principle}.
\end{proof}

\begin{proof}[Proof of Proposition~{\rm \ref{prop: Min principle}}]
By a standard partition of unity argument it suffices to prove that there exists $r>0$ so that for any $x_0$ the conclusion is valid when restricting the distributions to $U \cap B_r(x_0)$.

Choose $r$ so small that $k^2< \lambda_1(B_r)$, i.e.\ $r<j_{\frac{n-2}2,1}k^{-1}$. Without loss of generality we may assume that $x_0=0$. By the choice of $r$ there exists a unique $u_{0} \in H^{1}(B_r)$ such that
\[
(\Delta + k^{2})u_{0}=0 \text{ in $B_r$},\quad u_{0}|_{\partial B_r}=1.
\]
By elliptic regularity $u_{0} \in C^{2}(\overline{B_r})$ and by the maximum principle in Proposition~{\rm \ref{prop: Max principle}} we know that $u_{0} > 0$ in $\overline{B_r}$. A direct computation yields that $(\Delta + k^{2})w_{j} \ge 0$ in $U\cap B_r$ if and only if $v_{j} = u_{0}^{-1}w_{j}$ satisfies 
\[
u_{0}^{-1} \nabla \cdot u_{0}^{2} \nabla v_{j} \ge 0 \quad \text{for $j=1,2$ in the sense of $H^{-1}(U\cap B_r)$.}
\]
By~\cite[Theorem~II.6.6]{KS00IntroductionVariationalInequalities} the same property holds for $v =\min\{v_1, v_2\}$. Since $u_{0}>0$ in $\overline{B_r}$ we find $w=\min\{w_1, w_2\} = u_{0} \min\{v_1, v_2\} = u_{0} v$. By the same computation as before (but in the opposite direction) one deduces that $(\Delta+k^2)w \le 0$. This concludes the proof of Proposition~\ref{prop: Min principle}.
\end{proof}



\addtocontents{toc}{\SkipTocEntry}
\subsection{Laplacian under analytic maps}
\label{appen:helmholtz_complex}

The following classical fact was used for seeing how the Helmholtz equation changes under an an analytic change of coordinates.

\begin{lem} \label{lemma_varphi}
Let $\varphi$ be analytic in an open set $U \subset \mathbb{C}$ and let $u$ be $C^{\infty}$ in $\varphi(U)$. Then 
\[
\Delta(u \circ \varphi)(z) = \Delta u(\varphi(z)) \abs{\varphi'(z)}^2.
\]
\end{lem}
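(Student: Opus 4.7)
The plan is to factor the Laplacian using Wirtinger derivatives and exploit the Cauchy--Riemann equations for $\varphi$. Recall that with $\partial = \tfrac{1}{2}(\partial_x - i\partial_y)$ and $\dbar = \tfrac{1}{2}(\partial_x + i\partial_y)$ one has the algebraic identity $\Delta = 4\partial\dbar$. The analyticity of $\varphi$ on $U$ is equivalent to $\dbar\varphi \equiv 0$, which upon conjugation also gives $\partial\bar\varphi \equiv 0$. These two vanishings are what make the general chain rule collapse into the stated scalar factor $|\varphi'|^2$.

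First I would apply the chain rule for Wirtinger derivatives to $u\circ\varphi$, writing
\[
\partial(u\circ\varphi)(z) = (\partial u)(\varphi(z))\,\partial\varphi(z) + (\dbar u)(\varphi(z))\,\partial\bar\varphi(z).
\]
Since $\partial\bar\varphi = 0$ and $\partial\varphi = \varphi'$, this reduces to $\partial(u\circ\varphi)(z) = (\partial u)(\varphi(z))\,\varphi'(z)$.

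Next I would apply $\dbar$ to this expression. Because $\varphi'$ is itself holomorphic, $\dbar\varphi' = 0$, so only the first factor needs differentiating:
\[
\dbar\partial(u\circ\varphi)(z) = \dbar\bigl[(\partial u)(\varphi(z))\bigr]\,\varphi'(z) = \bigl[(\partial\partial u)(\varphi)\,\dbar\varphi + (\dbar\partial u)(\varphi)\,\dbar\bar\varphi\bigr]\varphi'(z).
\]
Using $\dbar\varphi = 0$ and $\dbar\bar\varphi = \overline{\partial\varphi} = \overline{\varphi'(z)}$, this collapses to $(\dbar\partial u)(\varphi(z))\,|\varphi'(z)|^2$. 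Multiplying by $4$ and using $\Delta = 4\partial\dbar = 4\dbar\partial$ yields the claimed identity. There is no real obstacle here: the whole proof is a two-step chain rule, and the content is entirely encoded in the two vanishings $\dbar\varphi = \dbar\varphi' = 0$ together with $\Delta = 4\partial\dbar$.
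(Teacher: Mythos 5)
Your proof is correct and follows essentially the same route as the paper: both arguments are a two-step Wirtinger chain rule using $\Delta = 4\partial\dbar$ together with $\dbar\varphi = 0$ (hence $\partial\bar\varphi = 0$ and $\dbar\varphi' = 0$). The only cosmetic difference is the order of the two derivatives — the paper first computes $\dbar(u\circ\varphi)$ and then applies $\partial$, while you compute $\partial(u\circ\varphi)$ first and then apply $\dbar$ — which changes nothing of substance.
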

\begin{proof}
We introduce the complex (Wirtinger) derivatives 
\[
\partial=\frac{1}{2}(\partial_{1}-i\partial_{2}),\quad\overline{\partial}=\frac{1}{2}(\partial_{1}+i\partial_{2}).
\]
Define $\tilde{u}(z):=u(\varphi(z))$ for $z\in U$. Note that $\varphi$ satisfies the Cauchy-Riemann equation $\overline{\partial}\varphi=0$, and write $\varphi'=\partial\varphi$. Using the chain rule (see e.g.\ \cite[equation~(2.48)]{AIM09PDEQuasiconformal}), we see that 
\begin{equation}
\overline{\partial}\tilde{u}(z)=\partial u\bigg|_{\varphi(z)}\overline{\partial}\varphi(z)+\overline{\partial}u\bigg|_{\varphi(z)}\overline{\partial\varphi(z)}=\overline{\partial}u\bigg|_{\varphi(z)}\overline{\partial\varphi(z)} \label{eq:first-der}
\end{equation}
and 
\begin{align*}
\frac{1}{4}\Delta\tilde{u}(z) & =\partial\overline{\partial}\tilde{u}(z) =\partial\overline{\partial}u\bigg|_{\varphi(z)}\partial\varphi(z)\overline{\partial\varphi(z)}+\overline{\partial}^{2}u\bigg|_{\varphi(z)}\partial\overline{\varphi}(z)\overline{\partial\varphi(z)} \\
& =\partial\overline{\partial}u\bigg|_{\varphi(z)}|\varphi'(z)|^{2} =\frac{1}{4}\Delta u\bigg|_{\varphi(z)}|\varphi'(z)|^{2}. \qedhere
\end{align*}
\end{proof}

\addtocontents{toc}{\SkipTocEntry}
\subsection{On the boundary of conformal images of the disk}
\label{appen: Riemann mapping regularity}

In this appendix we prove a number of results concerning the structure of the boundary of the two-dimensional $k$-quadrature domains constructed in Theorem~\ref{thm_quadrature_complex}. The results we shall prove are certainly well-known to experts in the field but since we have been unsuccessful in finding the precise statements in the literature we choose to include the proof.

Our first aim is to prove the following proposition which is essentially a restatement of Remark~\ref{rem:cusp_discussion}.
\begin{prop}\label{prop_conformal_boundary_regularity}
Let $\varphi$ be an analytic function in a neighbourhood of $\overline{\dd}$ which is injective in $\dd$ and set $D = \varphi(\dd)$. Then each $z\in \partial D$ falls into one of the following three categories.
\begin{enumerate}
\renewcommand{\labelenumi}{\theenumi}
\renewcommand{\theenumi}{(\Roman{enumi})}
 \item\label{itm_smoothpoints} (Smooth points) There is an $r>0$ so that $\partial D \cap B_r(z)$ is an analytic curve.
 
 \item\label{itm_cusppoints}
 (Inward cusp points) There is an $r>0$ so that $D\cap B_r(z)$ contains a semidisk and $\partial D \cap B_r(z)$ consists of two analytic curves ending and tangent to each other at $z$ and having no other common points.
 
 \item\label{itm_doublepoints}
 (Double points) There is an $r>0$ so that $\partial D \cap B_r(z)$ is the union of two analytic curves which are tangent to each other at $z$ and with no other common points. The set $D \cap B_r(z)$ consists of the two components into which the common normals to the analytic curves at $z$ point.
\end{enumerate}
Furthermore, there are only finitely many non-smooth points.
\end{prop}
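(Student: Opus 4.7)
The plan is to first reduce the local classification to an analysis of the preimage set $\varphi^{-1}(z) \cap \overline{\dd}$. Since $\varphi$ is analytic and nonconstant on a neighborhood of $\overline{\dd}$ its fibers are finite, and the open mapping theorem gives that $D = \varphi(\dd)$ is open so $\partial D \cap D = \emptyset$; combined with the identity $\varphi(\partial \dd) = \partial D$ from the proof of Theorem~\ref{thm_quadrature_complex}, every preimage of a boundary point lies on $\partial \dd$. If the preimage consists of a single non-critical point, the inverse function theorem yields a local analytic parametrization of $\partial D$ near $z$, giving category~\ref{itm_smoothpoints}. If the preimage is a single critical point $z_0$, injectivity on $\dd$ forces $\varphi''(z_0) \neq 0$ as in Remark~\ref{rem:cusp_discussion}\ref{itm_cusppoints_varphi}, and the local form $\varphi(w) = z + c(w-z_0)^2(1 + O(w-z_0))$ reduces after a biholomorphic change of variables to $w \mapsto w^2$, yielding the cusp description of category~\ref{itm_cusppoints}. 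If the preimage contains two or more points, each must be non-critical (else the combined local multiplicity would produce two preimages in $\dd$ for nearby points, violating injectivity), and the two local image arcs must be tangent (transversal intersection would force overlap of $\varphi(\dd)$ on one side), giving category~\ref{itm_doublepoints} when the arcs are distinct or category~\ref{itm_smoothpoints} when they coincide as sets.

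Finiteness of inward cusps is immediate: they are images of critical points of $\varphi$ on $\partial \dd$, and since $\varphi'$ is a not-identically-zero analytic function on a neighborhood of the compact set $\overline{\dd}$, it has only finitely many zeros there. The delicate part is bounding the number of category~\ref{itm_doublepoints} points, for which I would consider the real-analytic subset
\[
Z = \{(z_1,z_2) \in \partial\dd \times \partial\dd : z_1 \neq z_2,\ \varphi(z_1) = \varphi(z_2)\}
\]
of the two-torus and argue by contradiction that $Z$ is finite. If $Z$ were infinite, compactness together with real-analyticity of $F(z_1,z_2) = \varphi(z_1) - \varphi(z_2)$ would force a real-analytic arc $(w_1(s), w_2(s))$ of zeros of $F$ through some accumulation point $(z_1^*, z_2^*)$, giving $\varphi(w_1(s)) = \varphi(w_2(s))$ for $s$ in an interval.

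The main obstacle is analyzing this one-parameter family of coincidences. When $z_1^* \neq z_2^*$ and both $\varphi'(z_i^*) \neq 0$, composing with a local inverse defines a biholomorphism $\psi$ between neighborhoods $U_1, U_2$ of $z_1^*, z_2^*$ satisfying $\varphi = \varphi \circ \psi$ and sending a real-analytic sequence on $\partial\dd$ into $\partial\dd$; real-analyticity of $|\psi|^2 - 1$ along $\partial\dd$ upgrades this to $\psi(\partial\dd \cap U_1) \subset \partial\dd$, and Schwarz reflection then shows $\psi$ either preserves or interchanges the two sides of $\partial\dd$ locally. The first alternative contradicts injectivity of $\varphi$ on $\dd$ outright, via the identity $\varphi(z) = \varphi(\psi(z))$ with $z \neq \psi(z)$ both in $\dd$. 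The second forces the local image arcs $\varphi(\partial\dd \cap U_1)$ and $\varphi(\partial\dd \cap U_2)$ to coincide as sets, which places the accumulation point and all nearby pairs in category~\ref{itm_smoothpoints} rather than~\ref{itm_doublepoints}, contradicting the assumption that infinitely many category~\ref{itm_doublepoints} points approach $\varphi(z_1^*)$. Diagonal accumulation $z_1^* = z_2^*$ forces $z_1^*$ to be a critical point, and a parallel analysis using the local involution $\sigma$ swapping the two sheets of $\varphi$ near the cusp reduces the situation again to a finite set or to local smoothness of $\partial D$. Mixed pairs in which one preimage is critical cannot support a one-parameter family at all, since for each fixed critical $z_0$ the equation $\varphi(w) = \varphi(z_0)$ can hold on $\partial \dd$ only at isolated points (otherwise $\varphi$ would be constant). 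Hence category~\ref{itm_doublepoints} points cannot accumulate, so they are finite in number.
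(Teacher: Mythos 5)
Your overall architecture follows the paper quite closely: the local classification via preimages on $\partial\dd$, finiteness of cusps from the zeros of $\varphi'$, and a compactness/accumulation argument for pairs $(\zeta^-,\zeta^+)\in\partial\dd\times\partial\dd$ with $\varphi(\zeta^-)=\varphi(\zeta^+)$, split according to whether the limit coordinates are critical or diagonal (the paper's Cases 1--3). Your transition map $\psi=\varphi_{\rm loc}^{-1}\circ\varphi$ together with the vanishing of $|\psi|^2-1$ along $\partial\dd$ is a correct variant of the paper's ``$\Im G\equiv 0$'' step, and the side-preserving alternative indeed contradicts injectivity. The genuine gap is in the other branch, where the two local boundary arcs are forced to \emph{coincide}. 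You claim this ``places the accumulation point and all nearby pairs in category (I), contradicting the assumption that infinitely many category (III) points approach $\varphi(z_1^*)$'', and in the diagonal case that the involution $\sigma$ ``reduces the situation to a finite set or to local smoothness''. Neither is a contradiction: your standing assumption was only that $Z$ is infinite, and two boundary arcs with identical images produce a whole one-parameter family of points of $Z$ that are smooth boundary points, so $|Z|=\infty$ is not contradicted (and nearby points could in any case still be double points through a third preimage far from $(z_1^*,z_2^*)$, so ``all nearby pairs are (I)'' is unjustified as stated). Worse, in the diagonal case $\sigma'(\zeta)=-1$ together with $\sigma(\partial\dd)\subset\partial\dd$ forces the side-swapping alternative, so the two branches of the cusp have identical images: near $\varphi(\zeta)$ the set $\partial D$ is a single analytic arc \emph{ending} at $\varphi(\zeta)$ (a slit tip). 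That is neither ``a finite set'' nor ``local smoothness''; it is a boundary point belonging to none of the categories (I)--(III), so it is precisely the configuration the proposition must exclude, and your local analysis does not exclude it.

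Ruling out this slit-like degeneration is the heart of the paper's proof and requires a global step that your proposal does not contain: the paper propagates the local coincidence by analyticity along the entire boundary arcs between consecutive zeros of $\varphi'$, chases the endpoints through its Cases 2 and 3 to conclude that two full arcs satisfy $\varphi(\gamma_j)=\varphi(\gamma_{j'})$, and only then obtains a contradiction from a global fact, namely that $D$ would then be the complement of an arc, impossible since $D=\varphi(\dd)$ is bounded. Without an argument of this type the finiteness of the non-smooth points (and the very validity of the trichotomy at would-be slit tips) is not established. Two smaller remarks: the off-diagonal branch can be repaired by choosing, for each genuine double point, a pair of preimages realizing the two \emph{distinct} local curves, so that arc coincidence near the limit contradicts that choice directly --- but this does not help in the diagonal/critical case; and your appeal to ``a real-analytic arc of zeros of $F$ through an accumulation point'' is the curve selection lemma, which you neither justify nor actually need, since accumulation of zeros of the one-variable real-analytic function $|\psi|^2-1$ along $\partial\dd$ already suffices, exactly as in the paper.
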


The characterization of the different points in Proposition~\ref{prop_conformal_boundary_regularity} is classical. The sketch of proof that we provide focuses on proving the statement that the set of all non-smooth points is finite.

\begin{proof}[Sketch of the proof of Proposition~{\rm \ref{prop_conformal_boundary_regularity}}]
Recall that we in the proof of Theorem~\ref{thm_quadrature_complex} proved that $\varphi(\partial\mathbb{D})=\partial D$. We also note that since $\varphi$ is analytic in a neighbourhood of $\overline{\mathbb{D}}$ the set $D$ is bounded.
 
Let $z\in \partial D$ be the image of $\zeta \in \partial \mathbb{D}$. If $\varphi'(\zeta)\neq 0$ then the inverse function theorem for analytic mappings implies that $z$ satisfies the conclusion of~\ref{itm_smoothpoints}. Since $\varphi'$ is analytic in a neighbourhood of $\overline{\mathbb{D}}$ and $\partial \mathbb{D}$ is compact we find that $\varphi'$ either has finitely many zeroes on $\partial\mathbb{D}$ or $\varphi'\equiv 0$ in $\mathbb{D}$. Since $\varphi$ is injective in $\mathbb{D}$ we know that $\varphi' \not\equiv 0$. The injectivity of $\varphi$ in the interior implies that each zero of $\varphi'$ on $\partial \mathbb{D}$ is simple, see \ref{itm_cusppoints_varphi} in Remark~\ref{rem:cusp_discussion}. It is well-known that the zeroes of $\varphi'$ on $\partial \mathbb{D}$ correspond to $\partial D$ having inward cusps. 
 
Let $\{\zeta_1, \ldots, \zeta_k\}$ be the (finitely many) zeroes of $\varphi'$ on $\partial \mathbb{D}$ counted in counter-clockwise manner starting from the positive real axis. Let for $j=1, \ldots, k-1$ define $\gamma_j$ to be the open sub-arc of $\partial\mathbb{D}$ between the consecutive zeroes $\zeta_j$ and $\zeta_{j+1}$, and $\gamma_k$ be the open sub-arc from $\zeta_k$ to $\zeta_1$.
 
Finally, we want to show there are at most finitely many double points. We argue by contradiction. Suppose the contrary, that there are infinitely many double points. We define $\mathscr{D} := \{(\zeta^{-}_{\alpha}, \zeta^{+}_{\alpha})\}_{\alpha \in \Lambda}$ with $(\zeta^{-}_{\alpha}, \zeta^{+}_{\alpha}) \in \partial\mathbb{D}\times \partial\mathbb{D}$ such that $\zeta^{-}_{\alpha} \neq \zeta^{+}_{\alpha}$ and $\varphi(\zeta^{+}_{\alpha})=\varphi(\zeta^{-}_{\alpha})$. By compactness of $\partial\mathbb{D}\times \partial\mathbb{D}$, we can find a countable sequence of distinct pairs $\{(\zeta^{-}_{j}, \zeta^{+}_{j})\}_{j=1}^\infty$ in $\mathscr{D}$ such that 
\[
\lim_{j \rightarrow \infty} \zeta_{j}^{\pm} = \zeta^{\pm}
\]
for some $(\zeta^{-},\zeta^{+}) \in \mathscr{D}$.
We now divide our discussions into three cases:
\begin{enumerate}
 \item neither $\zeta^-$ nor $\zeta^+$ are zeros of $\varphi'$;
 \item one of the points $\zeta^-, \zeta^+$ is a zero of $\varphi'$ and the other is not;
 \item both $\zeta^{-}$ and $\zeta^{+}$ are zeros of $\varphi'$.
\end{enumerate}

\medskip

\textbf{Case 1: neither $\zeta^-$ nor $\zeta^+$ are zeros of $\varphi'$.} In this case, since $\varphi'$ is analytic and non-zero near both points $\zeta^{\pm} \in \partial \mathbb{D}$, then there exists two (sufficiently short) open arcs $\gamma^{\pm} \subset \partial \mathbb{D}$, passing through $\zeta^{\pm}$, with $\gamma^{-} \subset \gamma_{j}$ and $\gamma^{+} \subset \gamma_{j'}$, as well as $\varphi$ is analytic and invertible near $\gamma^{\pm}$. Without loss of generality, we may assume that $\zeta_{j}^{\pm} \in \gamma^{\pm}$ for $j=1,2,\cdots$. In particular, the arcs $\varphi(\gamma^{-})$ and $\varphi(\gamma^{+})$ intersect at infinitely many points.
Let $\varphi_{\rm loc}^{-1}$ denotes the local inverse of $\varphi$ near $\gamma^{+}$, and let $M_{\pm}$ be M\"{o}bius transforms with $M_{\pm}((0,1))=\gamma^{\pm}$ such that 
\[
G := M_{+}^{-1}\circ \varphi_{\rm loc}^{-1} \circ \varphi\circ M_{-}
\]
is analytic and bijective in a neighborhood of $[0, 1]$. By assuming $\gamma^{-}$ is sufficiently short, we can further assuming $\Re G((0,1)) \subset (0,1)$, where $\Re G$ is the real part of $G$.

For each $x_{j} := M_{-}^{-1}(\zeta_{j}^{-}) \in (0,1)$, we see that 
\[
\varphi_{\rm loc}^{-1} \circ \varphi \circ M_{-}(x_{j}) = \varphi_{\rm loc}^{-1} \circ \varphi(\zeta_{j}^{-}) = \zeta_{j}^{+} \in \gamma^{+},
\]
and hence $G(x_{j}) \in (0,1)$. This implies $\Im G(x_{j}) = 0$, where $\Im G$ is the imaginary part of $G$. Since $\zeta_{j}^{\pm}$ accumulate at $\zeta^{\pm} \in \gamma^{\pm}$, then $\Im G$ has an infinite number of zeros in $(0,1)$ and they accumulate at $x_{0} := M_{-}^{-1}(\zeta^{-}) \in (0,1)$. Since $\Im G$ is real-analytic, then $\Im G \equiv 0$. Hence we know that $G((0,1)) \subset (0,1)$, and consequently 
\[
\varphi(\gamma^{-}) \subset \varphi(\gamma^{+}),
\]
that is, $\varphi(\gamma^{-})$ is a sub-arc of $\varphi(\gamma^{+})$, and hence all points in $\varphi(\gamma^{-})$ are double points.

Let $p$ be the endpoint of $\varphi(\gamma^{-})$. If $p$ is not an endpoint of $\varphi(\gamma_{j})$ or $\varphi(\gamma_{j'})$ then by the arguments above there exists a neighborhood of $p$ in $\varphi(\gamma_{j})$ which is an sub-arc of $\varphi(\gamma_{j'})$. By continuity we conclude that $\varphi(\gamma_j)\cap \varphi(\gamma_{j'})$ contains an arc the endpoints of which are also endpoints of either $\varphi(\gamma_j)$ or $\varphi(\gamma_{j'})$. Since the endpoints of $\gamma_j, \gamma_{j'}$ are zeros of $\varphi'$ we have found an accumulation point of double-points which is the image of a zero of $\varphi'$, that is, we are in the setting of Cases 2 and 3. After analysing Cases 2 and 3 we shall conclude that $p$ is actually an endpoint of both $\varphi(\gamma_j)$ and $\varphi(\gamma_{j'})$, and hence $\varphi(\gamma_j)=\varphi(\gamma_{j'})$.

\medskip

\textbf{Case 2: one of the points $\zeta^-, \zeta^+$ is a zero of $\varphi'$ and the other is not.}
Without loss of generality, we may assume that $\varphi'(\zeta^-)=0$ and $\varphi'(\zeta^+)\neq 0$. The image of $\partial \mathbb{D}$ contained in a small neighbourhood of $\zeta^+$ is an analytic curve passing through $\varphi(\zeta^+)=\varphi(\zeta^-)$ while the image of a small neighbourhood of $\zeta^-$ is an inward cusp to $D$ with singular point at $\varphi(\zeta^+)=\varphi(\zeta^-)$. It is easy to deduce that the sets $\varphi(B_r(\zeta^+)\cap \mathbb{D})$ and $\varphi(B_r(\zeta^-)\cap \mathbb{D})$ must intersect for every $r>0$, which contradicts the bijectivity of $\varphi$. 
 
\medskip

\textbf{Case 3: both $\zeta^{-}$ and $\zeta^{+}$ are zeros of $\varphi'$.}
Note that both $\varphi(\zeta^{-})$ and $\varphi(\zeta^{+})$ are vertices of cusps. If $\zeta^{-} \neq \zeta^{+}$, then there are two different inward cusps sharing the same vertex which can be shown to violate that $\varphi$ maps $\mathbb{D}$ bijectively and continuously to $D$.  Thus we can assume that $\zeta^-=\zeta^+$. By rotation we can further assume that $\zeta^- = \zeta^+ = 1$. 

We define $\phi(\theta) = \varphi(e^{i\theta})$, and we are interested the behavior near $\theta = 0$. There exists $\epsilon > 0$ such that
\[
\phi(\theta) = \sum_{k=0}^{\infty} a_{k} \theta^{k} \quad \text{for all }\theta \in (-\epsilon,\epsilon).
\]
We now want to show that $\phi$ is even near 0, that is, $a_{k}=0$ for all odd integer $k$. If this is the case then $\phi((-\epsilon,0)) = \phi((0,\epsilon))$, that is two of the arcs $\{\varphi(\gamma_j)\}_j$ coincide along a sub-arc and in particular we have accumulation of double points in the interior of the arcs, that is we are in the setting of Case 1.

If $\phi$ is not even then there exists a smallest odd integer $k_{0}$ such that $a_{k_{0}} \neq 0$. By assumption $\varphi'(1) = 0$ which implies that $\phi'(0)=0$. This shows that $k_{0} \ge 3$. Using that
\[
\phi(\theta) = \sum_{k=0}^{k_{0}} a_{k} \theta^{k} +O(\theta^{k_0+1}),
\]
it is not difficult to see that there exists a sufficiently small $0 < \epsilon' < \epsilon$ such that there are only finitely many pairs of points $(\theta^{-}, \theta^{+}) \in (-\epsilon',\epsilon') \times (-\epsilon',\epsilon')$ with
\[
\varphi(e^{i\theta^-}) = \phi(\theta^{-}) = \phi(\theta^{+}) = \varphi(e^{i\theta^-}) \quad \text{and} \quad \theta^{-} < \theta^{+}.
\]
This contradicts that $\zeta^{-} = \zeta^{+}$ was an accumulation point the set of double points.

\medskip
 
\textbf{Conclusion.} By combining these three cases we conclude that if there exist an infinite number of double points then there exists $j\neq j'$ so that $\varphi(\gamma_j)= \varphi(\gamma_{j'})$. From the injectivity of $\varphi$ on $\mathbb{D}$ and the construction of the $\gamma_j$ one deduces that $D$ is the complement of the arc $\varphi(\gamma_j)$, but this is impossible under the assumptions on $\varphi$ since they imply that $D$ is bounded.
\end{proof}

For the readers' convenience we next consider the case of inward cusp points in more detail. Suppose that $z_0 = e^{i t_0} \in \p B_1$ and $\varphi'(z_0) = 0$. We let $\gamma(t) = e^{i(t + t_0)}$ be the boundary curve for $\p B_1$ with $\gamma(0) = z_0$. Then 
\[
\gamma^{(k)}(t) = i^k \gamma(t).
\]
Let $\eta(t) = \varphi(\gamma(t))$ be the corresponding boundary curve for $\p D$ with $\eta(0) = \varphi(z_0)$. Then $\eta$ is a smooth curve. We compute the derivatives of $\eta$:
\begin{align*}
\eta' &= \varphi'(\gamma) \gamma' = i \varphi'(\gamma) \gamma \\
\eta'' &= \varphi''(\gamma) (\gamma')^2 + \varphi'(\gamma) \gamma'' = i^2 (\varphi''(\gamma) \gamma^2 + \varphi'(\gamma) \gamma), \\
\eta''' &= i^3 (\varphi'''(\gamma) \gamma^3 + 3 \varphi''(\gamma) \gamma^2 + \varphi'(\gamma) \gamma).
\end{align*}
Evaluating at $t=0$ and using $\varphi'(z_0) = 0$ we obtain 
\begin{align*}
\eta(0) &= \varphi(z_0), \\
\eta'(0) &= 0, \\
\eta''(0) &= i^2 \varphi''(z_0) z_0^2, \\
\eta'''(0) &= i \eta''(0) \left( \frac{\varphi'''(z_0) z_0}{\varphi''(z_0)} + 3 \right).
\end{align*}

We now choose orthonormal coordinates $(x_1, x_2)$ so that the origin corresponds to $\varphi(z_0)$ and the vector $\eta''(0)$ corresponds to $\lambda e_1$ for some $\lambda > 0$. It follows that 
\[
\eta_1(t) = t^2 g(t)
\]
where $g$ is real-analytic near $0$ with $g(0) > 0$. We choose a new time variable $s(t)$ near $t=0$ as 
\[
s(t) = t \sqrt{g(t)}.
\]
This is a valid change of coordinates since $s$ is real-analytic near $0$ and satisfies $s'(0) > 0$. Writing $\tilde{\eta}(s) = \eta(t(s))$, we have $\tilde{\eta}_1(s) = s^2$. Moreover, since $\eta'(0) = 0$, we have 
\[
\tilde{\eta}'''(0) = (t'(0))^3 \eta'''(0) + 3 t'(0) t''(0) \eta''(0) = \eta''(0) \left( t'(0)^3 i \left( \frac{\varphi'''(z_0) z_0}{\varphi''(z_0)} + 3 \right) + 3 t'(0) t''(0) \right).
\]
Here $t'(0) > 0$, $t''(0) \in \mathbb{R}$ and $\eta''(0) = \lambda > 0$. If we assume additionally that
\begin{equation} \label{cusp_order_two_condition}
\frac{\varphi'''(z_0) z_0}{\varphi''(z_0)} + 3 \notin i \mathbb{R},
\end{equation}
then we have $\tilde{\eta}_2'''(0) \neq 0$.

We have proved that assuming \eqref{cusp_order_two_condition}, the boundary curve of $\p D$ near $\varphi(z_0)$ takes the form 
\begin{align*}
\tilde{\eta}_1(s) &= s^2, \\
\tilde{\eta}_2(s) &= h(s)
\end{align*}
where $h(0) = h'(0) = h''(0) = 0$ but $h'''(0) \neq 0$. This is the simplest possible (ordinary) cusp. In particular, $D$ has $C^0$ boundary near such a point.

If \eqref{cusp_order_two_condition} does not hold whenever $z_0 \in \p B_1$ and $\varphi'(z_0) = 0$, then $h(s)$ vanishes to order $\geq 4$ at $0$. If it vanishes to infinite order, then $h \equiv 0$ and the domain would look like the slit disk, but this is excluded by Proposition~\ref{prop_conformal_boundary_regularity}. 
If the vanishing order of $h$ is $k+1$ for some $k \geq 2$, then we have a singularity of type $A_k$. If $k$ is even, the boundary is $C^0$ near such a point, but if $k$ is odd then a curved cusp may occur, see Example~\ref{exa:curve-cusps} as well as Figure~\ref{fig:curve-cusps}. 


\bibliographystyle{custom}
\bibliography{ref}

\end{sloppypar}

\end{document}